\documentclass[12pt, a4paper]{amsart}
\usepackage{amsfonts,amsmath, amsthm, amssymb, amscd}
\usepackage{latexsym}
\input{amssym.def}
\input{amssym}

\usepackage[dvips]{graphicx}

\newcommand{\cal}{\mathcal}

\newtheorem{theorem}{Theorem}[section]
\newtheorem{lemma}[theorem]{Lemma}
\newtheorem{cor}[theorem]{Corollary}
\newtheorem{prop}[theorem]{Proposition}
\newtheorem{dfn}[theorem]{{Definition}}


\numberwithin{equation}{section}

\newcommand {\R}{\mathbb{R}} 

\DeclareMathOperator{\id}{id}
\DeclareMathOperator{\vol}{vol}

\DeclareMathOperator{\End}{End}
\DeclareMathOperator{\grad}{grad}
\DeclareMathOperator{\rank}{rank}
\DeclareMathOperator{\tr}{tr}
\def\remark{\par\medbreak\noindent{\sc Remark}\quad\enspace}

\begin{document}

\title[New results on noncompact harmonic manifolds]{New results on  noncompact harmonic manifolds}

\author{Gerhard Knieper }
\date{\today}
\address{Faculty of Mathematics,
Ruhr University Bochum, 44780 Bochum, Germany}
\email{gerhard.knieper@rub.de}
\subjclass{Primary 37C40, Secondary 53C12, 37C10}
\keywords{harmonic manifolds, geodesic flows, Lichnerowicz conjecture}


\begin{abstract}
The Lichnerowicz conjecture asserts that all harmonic manifolds are either flat
or locally symmetric spaces of rank~1.
This conjecture has been proved by  Z.~Szab\'{o} \cite{Sz} for harmonic manifolds with compact universal cover.
E.~Damek and F.~Ricci \cite{DR} provided examples showing that in the noncompact case the
conjecture is wrong. However, such manifolds do not admit a compact quotient.

In this paper we study, using a notion of rank, the asymptotic geometry and the geodesic flow on simply connected nonflat and noncompact harmonic manifolds denoted by $X$.

In the first part of the paper
we show that the following assertions are equivalent. The volume growth
is purely exponential, the rank of $X$ is one, the geodesic flow is Anosov with
respect to the Sasaki metric, $X$ is Gromov hyperbolic.

In the second part of the paper we show that the geodesic flow is Anosov if $X$ is a nonflat harmonic manifold with no focal points.
In the course of the proof we obtain that certain  partially hyperbolic flows on arbitrary Riemannian manifolds without focal points are Anosov, which is of interest beyond harmonic manifolds.

Combining the results of this paper with
the rigidity theorem's of \cite{BCG} , \cite{BFL}
 and \cite{FL}, we confirm the Lichnerowicz
conjecture for all compact harmonic manifolds
without focal points or with Gromov hyperbolic fundamental groups.
\end{abstract}


\maketitle


\section{Introduction}
A complete Riemannian manifold $X$ is called harmonic
if the harmonic functions satisfy the mean value property, that is, the average on any sphere coincides with its value in the center.
Equivalently, for any $p \in
X$ the volume density $\theta_p(q) = \sqrt{\det g_{ij}(q)}$
in normal coordinates, centered at any point $p \in
X$ is a radial function. In particular, if $c: [0, \infty) \to X$ is a normal geodesic with $c(0) =p$,
the function $f(t) := \theta_p(c(t))$ is independent of $c$. It is easy to see that all rank~1 symmetric
spaces and Euclidean spaces (model spaces) are harmonic. In 1944, A.~Lichnerowicz conjectured that conversely every complete harmonic manifold
is a model space. He confirmed the conjecture up to dimension~4 \cite{Li}. It was not before the beginning of the 1990's that general results where
obtained. In 1990 Z.~Szab\'{o} \cite{Sz} proved the Lichnerowicz conjecture for compact simply connected spaces. However, not much later,
in 1992, E.~Damek and F.~Ricci \cite{DR} showed that in the noncompact case the conjecture is wrong. They provided examples of homogeneous
harmonic spaces which are not symmetric.
Nevertheless, in 1995 G.~Besson, G.~Courtois and S.~Gallot \cite{BCG} confirmed the conjecture for manifolds of negative curvature admitting a
compact quotient. The proof consisted in a combination of deep rigidity results from hyperbolic dynamics
and used besides \cite{BCG} the work of Y.~Benoist, P.~Foulon and F.~Labourie (\cite{BFL} and P.~Foulon and
F.~Labourie \cite{FL}).

In 2002 A.~Ranjan and H.~Shah showed \cite{RSh2} that noncompact harmonic manifolds of polynomial volume growth are flat. Using a result by
 Y.~Nikolayevski \cite{Ni} showing that the density function $f$ is a exponential polynomial the result of A.~Ranjan and H.~Shah remains true
 under the assumption of subexponential volume growth.
In 2006 J.~Heber \cite{He} proved that among the homogeneous harmonic spaces only the model spaces and the Damek-Ricci spaces occur.
Therefore, it remains to study nonhomogenous harmonic manifolds of exponential volume growth. In particular, these are spaces without
conjugate points and horospheres of constant mean curvature $h >0$.

The starting point of this paper was a question asked by N.~Peyerimhoff whether noncompact harmonic manifolds with
positive mean curvature $h >0$ of the horospheres have purely exponential volume
growth, i.e., the quotient of the density function $f(t)$ and $e^{ht}$ stays for large $t$ between two positive constants.

It turned out that the answer to this question is intimately related to the notion of rank, which is a straight forward generalization of the
 wellknown rank of manifolds of nonpositive curvature \cite{BBE}. In particular,
the volume growth is purely exponential if and only if the rank is 1.
Moreover, we show that noncompact harmonic manifolds are of  rank~1 if and only if the geodesic flow is Anosov.
Therefore, having a compact quotient the rigidity theorems mentioned above force harmonic spaces of rank~1 as in the case of negative curvature
to be locally symmetric.

We believe that all nonflat harmonic manifolds are of rank~1. We confirm this for instance for all harmonic manifolds
without focal points which include spaces of nonpositive curvature. It is very likely that all noncompact
harmonic manifolds have no focal points.
All known examples of noncompact harmonic manifolds have nonpositive curvature.
By a result of A.~Ranjan and H.~Shah \cite{RSh1} a sufficient condition for no focal points is provided under the assumption that  besides the
 trace also the determinant of the second fundamental form of the geodesic spheres is a function of the radius. \\

The paper is organized as follows. In section 2 we introduce the techniques of Jacobi tensors. In particular, we show that manifolds
of constant negative curvature have minimal volume growth
among those harmonic manifolds with fixed mean curvature of the horospheres.\\

In section 3 we introduce the notion of rank for manifolds without conjugate points. We
 show that for harmonic spaces the following three properties are equivalent: $X$ has rank~1, the geodesic flow on $X$ is Anosov, the volume
growth is purely exponential.

 In section 4 we show that  for noncompact simply connected harmonic Gromov hyperbolicity is equivalent to the three properties studied in section three. Hence, compact harmonic spaces with Gromov hyperbolic fundamental
 groups are locally symmetric.

In section 5 we study harmonic manifolds $X$ of bounded asymptote and show that the rank
is constant. In particular, they include all manifolds without focal points, i.e.,
manifolds for which geodesic spheres are convex.
Using a classical topological result of N.E.~Steenrod and J.H.C.~Whitehead on vector fields of spheres \cite{SW} we show that in odd dimensions
the rank is one. If additionally $X$ admits a compact quotient $X$ has to have constant negative curvature.

In section 6 we study geodesic flows on arbitrary manifolds without focal points and constant rank. Under the assumption of bounded sectional curvature together with a certain transversality  condition we obtain first that the geodesic
flow is partially hyperbolic. Using a geometric argument we finally show
that the flow is Anosov.
 By adding to this the results of chapter 5 we obtain that for all harmonic manifolds without focal points the geodesic flow is Anosov as well.

In the appendix we collect for the convenience of the reader properties of Jacobi tensors which are important
in this paper.


\section{Volume growth in harmonic manifolds}

In this paper $X$ be will denote a complete, noncompact, simply connected harmonic manifold.
This implies that $X$ is a manifold without conjugate points and thus
by a theorem of Cartan-Hadamard the exponential map $\exp_p: T_pX \to X$ is a diffeomorphism. Moreover, $X$ is an Einstein manifold and thus
analytic (see \cite{Be}).

We briefly recall the calculus of Jacobi tensors. Let $c: I \to X$ be a
 unit speed geodesic and let $Nc$ denote the normal bundle of $c$ given by a disjoint union
 $$
 N_t(c) := \{w \in T_{c(t)}X \mid \langle w, \dot{c}(t) \rangle = 0 \}.
$$
A $(1,1)$-tensor along $c$ is  a differentiable section
$$
Y: \mathbb{R} \to \End{Nc}= \bigcup_{t \in I}\End( N_t(c)),
$$
i.e., for all orthogonal parallel vector fields $x_t$ along $c$ the covariant derivative
of $t \to Y(t) x_t$ exists. The derivative $Y'(t) \in \End( N_t(c))$ is defined by
$$
Y'(t)(x_t) = \frac{D}{dt} \left(Y(t)x_t \right).
$$
$Y$ is called parallel  if $Y'(t) = 0$ for all $t$.
If $Y$ is parallel we have $Y(t)x_t = (Y(0)x)_t$ and, therefore,
$\langle Y(t)x_t , y_t \rangle$ is constant for all parallel vector fields $x_t, y_t$ along $c$ .
 In particular, $Y$ is parallel if and only if $Y$ is a constant matrix with respect to parallel frame field
 in the normal bundle of $c$. Therefore, parallel $(1,1)$-tensors are also called constant.

The curvature tensor $R$ induces a symmetric $(1,1)$-tensor along $c$ given by
$$
R(t) w : = R(w, \dot{c}(t)) \dot{c}(t).
$$
A $(1,1)$-tensor $Y$ along $c$ is called a Jacobi tensor
if it solves the Jacobi equation
$$
Y''(t) + R(t) Y(t) = 0.
$$
If $Y, Z$ are two Jacobi tensors along $c$ the derivative of the Wronskian
$$
W(Y, Z)(t) := Y'^{\ast}(t) Z(t) - Y^{\ast}(t)Z'(t)
$$
is zero and thus $W(Y, Z)$ defines a parallel $(1,1)$-tensor.
A Jacobi tensor $Y$ along a geodesic $c: I \to X$ is called Lagrange tensor if $W(Y, Y) =0$.
The importance of Lagrange tensors comes from the following proposition.

\begin{prop} \label{jac1}

Let $Y: I \to \End{Nc} $ be a Jacobi tensor along a geodesic $c: I \to X$ which is nonsingular
for all $t \in I$. Then for $t_0 \in I$ and any other Jacobi tensor $Z$ along $c$, there exist constant
tensors $C_1$ and $C_2$ such that
$$
Z(t) = Y(t) \left( \int\limits_{t_0}^{t}( Y^{\ast} Y)^{-1}(s) ds \ C_1 + C_2 \right)
$$
for all $t \in I$.

\end{prop}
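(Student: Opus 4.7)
The natural strategy is reduction of order: since $Y(t)$ is nonsingular for all $t \in I$, I can define the $(1,1)$-tensor
$$
A(t) := Y(t)^{-1} Z(t)
$$
and try to reduce the second order Jacobi equation for $Z$ to a first order equation for $A$. The expected outcome is an explicit formula for $A'$ in terms of $(Y^*Y)^{-1}$, which integrates to give the desired expression.

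The key computation is the Wronskian $W(Y,Z)$ written in terms of $A$. From $Z = YA$ I would obtain $Z' = Y'A + YA'$, hence
$$
W(Y,Z) = Y'^{\ast}Z - Y^{\ast}Z' = \bigl(Y'^{\ast}Y - Y^{\ast}Y'\bigr)A - (Y^{\ast}Y)A' = W(Y,Y)\,A - (Y^{\ast}Y)\,A'.
$$
Both $W(Y,Y)$ and $W(Y,Z)$ are parallel, i.e. constant $(1,1)$-tensors. Since the formula in the statement involves only $(Y^{\ast}Y)^{-1}$ and no rotational correction, the proposition is implicitly assuming that $Y$ is a Lagrange tensor, so that $W(Y,Y) = 0$. (This is the hypothesis under which the statement is correct; in the general case, an additional term linear in $A$ survives and one does not obtain a pure integral formula.) Granting this, the displayed identity collapses to
$$
A'(t) = -(Y^{\ast}Y)^{-1}(t)\, W(Y,Z).
$$

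Setting $C_1 := -W(Y,Z)$, which is a parallel, hence constant, tensor, I can integrate the above first-order equation from $t_0$ to $t$:
$$
A(t) = \int_{t_0}^{t}(Y^{\ast}Y)^{-1}(s)\,ds\; C_1 + A(t_0),
$$
and then $C_2 := A(t_0) = Y(t_0)^{-1}Z(t_0)$ is a second constant tensor. Multiplying on the left by $Y(t)$ yields the formula claimed in the proposition.

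The main obstacle to watch for is not really an obstacle but a bookkeeping issue: the tensors are noncommutative, so one must keep the order of products correct throughout, in particular in the Wronskian identity above where $Y^{\ast}Y$ sits on the left of $A'$ and hence $(Y^{\ast}Y)^{-1}$ is later applied on the left of $C_1$. The only conceptual point is recognising that the clean form of the formula forces $Y$ to be Lagrange, and that both $W(Y,Y)$ and $W(Y,Z)$ being parallel is exactly what lets the right-hand side be expressed through constant tensors $C_1$ and $C_2$.
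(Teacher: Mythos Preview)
Your proof is correct, and you rightly flag that the Lagrange hypothesis on $Y$ is needed (the paper's statement in the body omits it, but the actual proof in the appendix assumes it explicitly).

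Your route differs from the paper's. Both start by writing $Z = YA$, but the paper then differentiates twice, uses the Jacobi equations for $Y$ and $Z$ to obtain the first-order linear ODE
$$
2Y^{-1}(t)Y'(t)\,G(t) + G'(t) = 0
$$
for $G = A'$, checks by a direct computation (using the Lagrange condition) that $(Y^{\ast}Y)^{-1}$ is a solution, and then invokes uniqueness of solutions to conclude $A'(t) = (Y^{\ast}Y)^{-1}(t)\,C_1$ for some constant $C_1$. You instead go straight through the Wronskian identity $W(Y,Z) = W(Y,Y)\,A - (Y^{\ast}Y)\,A'$, which with $W(Y,Y)=0$ immediately gives $A' = -(Y^{\ast}Y)^{-1}W(Y,Z)$ and even identifies the constant $C_1$ as $-W(Y,Z)$. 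Your argument is shorter and more transparent, since it avoids the second differentiation, the verification that $(Y^{\ast}Y)^{-1}$ solves the auxiliary ODE, and the appeal to ODE uniqueness; the paper's approach, on the other hand, makes no explicit use of the constancy of $W(Y,Z)$ and is perhaps closer in spirit to classical reduction-of-order arguments.
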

\remark The definition of the integral and a proof of this proposition is given in the appendix.

Let $SX$ denote the unit tangent bundle of $X$ with fibres
$S_pX$, $p \in X$, and, for every $v \in SX$, let $c_v: \R \to X$
denote the unique geodesic satisfying $c'(0) = v$.
Define $A_v$ to be the Jacobi tensor along $c_v$ with $A_v(0) = 0$ and
$A_v'(0) = \id$.
Then the volume of a geodesic sphere $S(p,r)$ of radius $r$ about $p$ is given by
$$
\vol S(p,r) = \int\limits_{S_pX}\det A_v(r) d \theta_p(v),
$$
where $d \theta_p(v)$ is the volume element of $S_pX$ induced by the Riemannian metric.
By definition  $X$ is harmonic if and only if the volume density $f(t) = \det A_v(t)$ does not depend on $v$.
Therefore
$$
\vol S(p,r) = \omega_{n-1} f(r),
$$
where $\omega_{n-1}$ is the volume of the sphere in the Euclidean
space $\mathbb{R}^n$.
Since
$$
\frac{(\det A_v(r))'}{\det A_v(r)} = \tr (A_v'(r) A_v(r)^{-1})
$$
is the mean curvature  of the geodesic sphere of radius $r >0$ about $\pi(v)$ in $c_v(r)$,
$X$ is harmonic if and only if the mean curvature of all spheres is a function depending only on the radius.

Of fundamental importance are the stable and unstable Jacobi tensors.
For a general complete simply connected manifold without conjugate points $X$ they are defined as follows.
For $ v \in SX$ and $r > 0$ denote by
  $ S_{v,r}$  and $U_{v,r}$ the Jacobi tensors along $c_v$ such that
 $$
  S_{v,r}(0) = U_{v,r}(0) = \id  \; \; \text{and} \; \;  S_{v,r}(r) = 0 , \; U_{v,r}(-r) = 0.
 $$
 Let
 $$
 S_v =\lim\limits_{r \to \infty}S_{v,r} \; \; \text{and} \; \ U_v =\lim\limits_{r \to \infty}U_{v,r}
 $$
be the stable and unstable Jacobi tensors. Note, that $\tr U_{v,r}'(0) =\tr (A_{\phi^{-r}v}'(r) A_{\phi^{-r}v}^{-1}(r)) $ and if $X$ is complete noncompact harmonic manifold
$\tr U_{v,r}'(0) = \frac{f'(r)}{f(r)} $ is converging to $\tr U_{v}'(0)=:h \ge 0$, where $h$  is the mean curvature of the horospheres.
Hence,
$$
\lim\limits_{r \to \infty}\frac {\log \vol S(p,r)}{r}
= \lim\limits_{r \to \infty}\frac {f'(r)}{f(r)}=h
$$
\begin{dfn}
A noncompact harmonic manifold with $h>0$ is called of purely exponential
volume growth if there are constants $0 <a \le b$ such that
$$
a e^{hr} \le f(r) \le be^{hr}
$$
for all $r \ge1$.
\end{dfn}
\begin{lemma} \label{sujac}
Let $X$ be a complete simply connected manifold without conjugate points.
 Let $c_v: \R \to X$ be a geodesic with $\dot c_v(0) = v \in SX$ and $s, r > 0$.
Then we have
$$
(U_{v,r}'(0) - S_{v,s}'(0))^{-1} = \int\limits_{0}^s(U_{v,r}^{\ast}U_{v,r})^{-1}(u) du
$$
and
 $$
(U_{v}'(0) - S_{v,s}'(0))^{-1} = \int\limits_{0}^s(U_{v}^{\ast}U_{v})^{-1}(u) du.
$$
Similarly for $0 <s <r$  we have
$$
(S_{v,r}'(0) - S_{v,s}'(0))^{-1} = \int\limits_{0}^s(S_{v,r}^{\ast}S_{v,r})^{-1}(u) du
$$
and
$$
(S_{v}'(0) - S_{v,s}'(0))^{-1} = \int\limits_{0}^s(S_{v}^{\ast}S_{v})^{-1}(u) du.
$$
Furthermore, the function
$$
\det  \left(\int\limits_{0}^s(U_{v}^{\ast}U_{v})^{-1}(u) du\right) = \frac{1}{\det (U_{v}'(0) - S_{v,s}'(0))}
$$
is strictly monotonically increasing.
\end{lemma}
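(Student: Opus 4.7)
The plan is to apply Proposition \ref{jac1} with $Z = S_{v,s}$ and $Y$ one of the nonsingular Jacobi tensors $U_{v,r}$ or $S_{v,r}$, and to determine the constants of integration from the boundary conditions $S_{v,s}(0)=\id$ and $S_{v,s}(s)=0$. The unbounded versions (with $U_v$ or $S_v$) follow by passing to the limit $r\to\infty$, using locally uniform convergence of the tensors and their first derivatives.

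For the first identity, take $Y=U_{v,r}$, which in a space without conjugate points vanishes only at $-r$ and is therefore nonsingular on $[0,s]$. Proposition \ref{jac1} with $t_0=0$ gives
$$
S_{v,s}(t) = U_{v,r}(t)\left(\int_0^t (U_{v,r}^{\ast}U_{v,r})^{-1}(u)\,du \cdot C_1 + C_2\right).
$$
Evaluating at $t=0$ forces $C_2=\id$. Differentiating and evaluating at $t=0$ (using that the integral vanishes at $0$ and $(U_{v,r}^{\ast}U_{v,r})^{-1}(0)=\id$) gives $S_{v,s}'(0)=U_{v,r}'(0)+C_1$. Finally, evaluating at $t=s$ with $S_{v,s}(s)=0$ and using that $U_{v,r}(s)$ is invertible yields the first identity. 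The third identity is proven identically with $Y=S_{v,r}$, which for $0<s<r$ vanishes only at $r$ and is hence nonsingular on $[0,s]$. The second and fourth identities are obtained by taking $r\to\infty$.

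For the monotonicity, observe that $(U_v^{\ast}U_v)^{-1}$ is symmetric positive definite along $c_v$, so $M(s):=\int_0^s (U_v^{\ast}U_v)^{-1}(u)\,du$ is symmetric positive definite for $s>0$ with symmetric positive definite derivative $M'(s)=(U_v^{\ast}U_v)^{-1}(s)$. By Jacobi's formula,
$$
\frac{d}{ds}\det M(s) = \det M(s)\cdot \tr\left(M(s)^{-1}M'(s)\right),
$$
and since $M(s)^{-1}M'(s)$ is similar to the positive definite matrix $M(s)^{-1/2}M'(s)M(s)^{-1/2}$, it has strictly positive trace; hence $\det M(s)$ is strictly increasing.

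There is no serious obstacle. The minor points requiring care are verifying the nonsingularity of $U_{v,r}$ and $S_{v,r}$ on $[0,s]$ so that Proposition \ref{jac1} applies, and justifying the $r\to\infty$ limit, which follows from the standard locally uniform convergence $U_{v,r}\to U_v$, $U_{v,r}'\to U_v'$ (and likewise for $S_{v,r}$) in manifolds without conjugate points.
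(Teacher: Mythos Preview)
Your proof is correct and follows essentially the same route as the paper: apply Proposition~\ref{jac1} with $Z=S_{v,s}$ and $Y=U_{v,r}$ (resp.\ $S_{v,r}$), read off the constants from the boundary conditions, and pass to the limit $r\to\infty$. The only cosmetic difference is that the paper chooses the base point of the integral to be $s$ (so that one of the two constants is fixed from the outset), whereas you take $t_0=0$ and determine both constants; and for the monotonicity the paper argues via the Loewner monotonicity of $s\mapsto U_v'(0)-S_{v,s}'(0)$, while your Jacobi-formula argument on $\det M(s)$ is an equally valid (and perhaps slightly more self-contained) alternative.
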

\begin{proof}
Let $s,r$ be positive real numbers. For all $s > -r$ the endomorphism $U_{v,r}(s)$ is nonsingular and Lagrangian.
Using proposition \ref{jac1} we obtain for all $t > -r$
$$
S_{v,s}(t) = U_{v,r}(t) \int\limits_{t}^s(U_{v,r}^{\ast}U_{v,r})^{-1}(u) \ du \;C_{r,s}
$$
for a constant $(1,1)$- tensor $C_{r,s}$. Evaluating and differentiating this identity at $t = 0 $ yields
$$
\id = S_{v,s}(0) = \int\limits_{0}^s(U_{v,r}^{\ast}U_{v,r})^{-1}(u) \ du \; C_{r,s}
$$
and
\begin{eqnarray*}
S_{v,s}'(0) &=& U_{v,r}'(0)\int\limits_{0}^s(U_{v,r}^{\ast}U_{v,r})^{-1}(u) \ du \; C_{r,s} - C_{r,s}\\
&=& U_{v,r}'(0) - C_{r,s}
\end{eqnarray*}
which proves the first equation. Taking on both sides the limit $r \to \infty$ yields the second
equation. \\
Now consider $0 <s <r$. Again using proposition \ref{jac1}  we obtain for all $t < r$
$$
S_{v,s}(t) = S_{v,r}(t) \int\limits_{t}^s(S_{v,r}^{\ast}S_{v,r})^{-1}(u) \ du \;D_{r,s}
$$
for a constant $(1,1)$-tensor $D_{r,s}$. As above evaluating and differentiating this identity at $t = 0 $ yields the second assertion.
Since
$$
0 < \langle (U_{v}'(0) - S_{v,s_2}'(0))x,x \rangle < \langle( U_{v}'(0) - S_{v,s_1}'(0))x,x \rangle
$$
for $s_1 < s_2$ and $x \in v^\perp$, we obtain the last claim.
\end{proof}
\begin{prop}
Let $X$ be a noncompact, simply connected harmonic manifold and $h = 0$.
Then $X$ is flat.
\end{prop}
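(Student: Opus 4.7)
My plan is to combine the Jacobi tensor calculus of Section~2 with Minkowski's determinant inequality to show that $h=0$ forces the polynomial lower bound $f(t)\ge t^{n-1}$ on the volume density, and then to invoke the rigidity results of Nikolayevski \cite{Ni} and Ranjan--Shah \cite{RSh2} (both cited in the introduction) to conclude that $X$ is flat.

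First I would pin down $\det U_v\equiv 1$. By harmonicity every horosphere has the same mean curvature $h$, so the trace of the shape operator $U_v'(t)U_v(t)^{-1}$ of the unstable horosphere through $c_v(t)$ equals $h$ for every $t$. Hence $(\log\det U_v)'(t)=h$, and integrating with $U_v(0)=\id$ gives $\det U_v(t)=e^{ht}$, which is identically $1$ under the hypothesis $h=0$. I would then apply Proposition~\ref{jac1} with $Y=U_v$, $Z=A_v$ and $t_0=0$; the initial conditions $A_v(0)=0$ and $A_v'(0)=\id$ (together with $U_v(0)=\id$) fix $C_2=0$ and $C_1=\id$, so
\[
A_v(t)=U_v(t)\int_0^t (U_v^{\ast} U_v)^{-1}(s)\,ds.
\]
Taking determinants and using $\det U_v\equiv 1$ yields $f(t)=\det\int_0^t (U_v^{\ast} U_v)^{-1}(s)\,ds$. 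Since the integrand is positive definite with determinant $1$, Minkowski's determinant inequality for integrals delivers $f(t)^{1/(n-1)}\ge \int_0^t 1\,ds=t$, i.e., $f(t)\ge t^{n-1}$.

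To finish, I would combine this polynomial lower bound with the subexponential upper bound recorded in the excerpt, $\lim_{t\to\infty}(\log f(t))/t = h = 0$, to conclude that $f$ has polynomial growth. By Nikolayevski's theorem \cite{Ni}, $f$ is an exponential polynomial; under subexponential growth it must then be a genuine polynomial, and the rigidity result of Ranjan--Shah \cite{RSh2} yields that $X$ is flat. The main obstacle is the invocation of the deep external results \cite{Ni} and \cite{RSh2}; the contribution of Section~2 here is to supply, via Proposition~\ref{jac1} and Minkowski's inequality, the polynomial lower bound that cleanly links the hypothesis $h=0$ to those rigidity theorems.
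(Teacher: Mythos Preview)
Your core argument---$h=0$ gives subexponential growth, Nikolayevski's theorem forces an exponential polynomial with subexponential growth to be a genuine polynomial, and then Ranjan--Shah's rigidity yields flatness---is exactly the paper's two-line proof. However, the entire Minkowski detour establishing $f(t)\ge t^{n-1}$ is superfluous: that lower bound plays no role in the chain ``subexponential $+$ exponential polynomial $\Rightarrow$ polynomial $\Rightarrow$ flat.'' In particular, your sentence ``combine this polynomial lower bound with the subexponential upper bound \dots\ to conclude that $f$ has polynomial growth'' is not a valid implication on its own (for instance $e^{\sqrt{t}}$ is bounded below by $t^{n-1}$ and is subexponential without being of polynomial growth); the actual work is done entirely by Nikolayevski's structural result, which you invoke in the very next sentence. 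So the Minkowski computation, while correct, is not the ``clean link'' you describe---it can simply be deleted, and what remains is precisely the paper's proof.
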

\begin{proof}
>From \cite{Ni} follows that $X$ has polynomial volume growth. But this implies by a result of A.~Ranjan and H.~Shah \cite{RSh2} that $X$ is flat.
\end{proof}
\begin{cor} \label{volgr}
Let $X$ be a noncompact, simply connected harmonic manifold such that $h >0$.
Then the function $F: [0, \infty) \to [0, \infty)$ given by
$$
F(t) = \frac{f(t)}{e^{ht}} = \frac{1}{\det (U_{v}'(0) - S_{v,t}'(0))}
$$
is strictly monotonically increasing. Moreover,
$$
\lim\limits_{t \to \infty} F(t)=
\left\{ \begin{array}{ccr}
\infty, & \text { if\ }& \det (U_{v}'(0) - S_{v}'(0)) =0
\\
 \frac{1}{\det (U_{v}'(0) - S_{v}'(0))}, & \text{if \ }& \det(U_v'(0) -S_v'(0)) >0
\end{array} \right.
$$
In particular,
$
\det(U_v'(0) -S_v'(0))
$
and $\det (U_{v}'(0) - S_{v,t}'(0))$
are independent of $v \in SX $ and
$$
a e^{ht}\le f(t)
$$
for all $t \ge 1$, where $a = \frac{1}{\det (U_{v}'(0) - S_{v,1}'(0))}$
\end{cor}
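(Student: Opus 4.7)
The aim is to split $f(t) = \det A_v(t)$ into a factor $e^{ht}$, coming from the unstable Jacobi tensor $U_v$, and a factor $1/\det(U_v'(0) - S_{v,t}'(0))$, coming from Lemma \ref{sujac}. The rest of the corollary is then bookkeeping.

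First, since $U_v$ is nonsingular on all of $\R$, Proposition \ref{jac1} applied to the Jacobi tensor $A_v$ with initial conditions $A_v(0)=0$, $A_v'(0)=\id$ yields
$$
A_v(t) = U_v(t) \int_0^t (U_v^{\ast} U_v)^{-1}(u)\, du,
$$
the constants being determined by evaluating at $0$ and using $U_v(0) = \id$. Taking determinants and invoking Lemma \ref{sujac} for the integral factor,
$$
f(t) = \det A_v(t) = \det U_v(t) \cdot \frac{1}{\det(U_v'(0) - S_{v,t}'(0))}.
$$

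The only point requiring real care is the identification $\det U_v(t) = e^{ht}$. Both $U_{\phi^t v}$ along $c_{\phi^t v}$ and the shifted tensor $s \mapsto U_v(t+s) U_v(t)^{-1}$ are Jacobi tensors vanishing as $s \to -\infty$ and equal to $\id$ at $s = 0$; by uniqueness they agree, so differentiating at $s = 0$ gives $U_{\phi^t v}'(0) = U_v'(t) U_v(t)^{-1}$. In a harmonic manifold every horosphere has mean curvature $h$, so the trace of the left hand side equals $h$ for \emph{every} $t$ and every $v$. Integrating the Liouville identity $(\log \det U_v)'(t) = \tr(U_v'(t) U_v(t)^{-1}) = h$ from $0$ with initial value $\det U_v(0) = 1$ yields $\det U_v(t) = e^{ht}$, and hence the stated formula for $F(t)$.

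With the formula in hand, the remaining assertions are routine. Strict monotonicity of $F$ is exactly the last claim of Lemma \ref{sujac}. For the limit, $S_{v,t}'(0) \to S_v'(0)$ as $t \to \infty$ by definition of $S_v$, so $\det(U_v'(0) - S_{v,t}'(0))$ descends monotonically to $\det(U_v'(0) - S_v'(0))$, giving the two cases. Since $F = f/e^{ht}$ and $f$ is independent of $v$ by harmonicity, so is $\det(U_v'(0) - S_{v,t}'(0))$, and passing to the limit gives independence of $\det(U_v'(0) - S_v'(0))$. Finally, monotonicity gives $F(t) \ge F(1) = a$ for all $t \ge 1$, proving the lower bound $f(t) \ge a e^{ht}$.
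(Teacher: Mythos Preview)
Your proof is correct and follows essentially the same route as the paper: express $A_v$ via $U_v$ using Proposition~\ref{jac1}, identify $\det U_v(t)=e^{ht}$ from the horosphere mean curvature, and read off the formula for $F(t)$; the remaining assertions are then consequences of Lemma~\ref{sujac} and harmonicity, exactly as in the paper. One small caution: your justification of the cocycle identity $U_{\phi^t v}'(0)=U_v'(t)U_v(t)^{-1}$ via ``Jacobi tensors vanishing as $s\to-\infty$'' is heuristic --- $U_v$ need not literally vanish at $-\infty$, and uniqueness among such tensors is not immediate --- so it is cleaner to cite Lemma~\ref{central}, which derives the identity from the finite approximants $U_{v,r}$.
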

\begin{proof}
Let $A_v$ be the Jacobi tensor along the geodesic $c_v: \R \to X$  with $\dot c_v(0) = v \in SX$
such that $A_v(0) = 0$ and $A_v'(0) = \id$. Then proposition \ref{jac1} implies
$$
A_v(t)= U_{v}(t) \int\limits_{0}^t(U_{v}^{\ast}U_{v})^{-1}(u) du.
$$
Since by lemma \ref{central} we have
$$
(\log \det U_v)'(t) = \tr U_v'(t)  U_v^{-1}(t) = \tr U_{\phi^t(v)}'(0) =h,
$$
we obtain
\begin{eqnarray*}
\frac{f(t)}{e^{ht}} &= &\frac{\det A_v(t)}{ \det U_v(t)} = \det \left(\int\limits_{0}^t(U_{v}^{\ast}U_{v})^{-1}(u) du\right)\\
&= & \frac{1}{\det (U_{v}'(0) - S_{v,t}'(0))}.
\end{eqnarray*}
In particular, $\det (U_{v}'(0) - S_{v,t}'(0))$ as well as $\det (U_{v}'(0) - S_{v}'(0))$ are independent of $v \in SX$.

\end{proof}
Using the result above we obtain that  manifolds
of constant negative curvature have minimal volume growth
among those harmonic manifolds with fixed mean curvature of the horospheres. More precisely:
\begin{cor}
Let $X$  be a $n$-dimensional, noncompact, simply connected harmonic manifold with mean curvature of the horospheres equal to $h >0$. Then
$$
\lim\limits_{t \to \infty} \frac{f(t)}{e^{ht}} \ge \left( \frac{n-1}{2h} \right)^{n-1}
$$
and equality holds if and only if $X$ has constant negative sectional curvature.
\end{cor}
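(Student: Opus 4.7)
The plan is to apply Corollary \ref{volgr} to recast the limit as $1/\det(U_v'(0) - S_v'(0))$, and then to control that determinant by the arithmetic--geometric mean inequality. I would first dispatch the trivial case $\det(U_v'(0) - S_v'(0)) = 0$, where the limit is already $+\infty$. In the remaining case Lemma \ref{sujac} exhibits each $(U_v'(0) - S_{v,s}'(0))^{-1}$ as a positive-definite symmetric integral, so its inverse is symmetric positive definite, and passing to $s\to\infty$ makes $M := U_v'(0) - S_v'(0)$ a symmetric positive-definite operator on the $(n-1)$-dimensional space $v^\perp$.

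Next I would compute the trace of $M$. The excerpt records $\tr U_v'(0) = h$. Time reversal of geodesics, combined with the relation between the stable Jacobi tensor of $c_v$ and the unstable Jacobi tensor of $c_{-v}$, gives $S_v'(0) = -U_{-v}'(0)$ under the natural parallel identification, so harmonicity (which yields $\tr U_{-v}'(0) = h$ as well) forces $\tr S_v'(0) = -h$ and hence $\tr M = 2h$.

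The arithmetic--geometric mean inequality applied to the eigenvalues of the symmetric positive-definite operator $M$ on the $(n-1)$-dimensional space $v^\perp$ then produces
$$
\det M \le \left(\frac{\tr M}{n-1}\right)^{n-1} = \left(\frac{2h}{n-1}\right)^{n-1},
$$
with equality precisely when $M = \tfrac{2h}{n-1}\,\id$. Reciprocating and invoking Corollary \ref{volgr} delivers the asserted lower bound.

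For the equality case I would invoke the matrix Riccati equation. Writing $u(t) := U_{\phi^t v}'(0)$ and $s(t) := S_{\phi^t v}'(0)$, both satisfy $y'(t) + y(t)^2 + R(t) = 0$ along $c_v$. The hypothesis $u(t) - s(t) = \tfrac{2h}{n-1}\,\id$ for all $t$ and all $v$ forces, on subtracting the two Riccati equations, $u^2 = s^2$; together with $s = u - \tfrac{2h}{n-1}\,\id$ this collapses to $u = \tfrac{h}{n-1}\,\id$. Reinserting into the Riccati equation yields $R(t) = -\bigl(h/(n-1)\bigr)^2\,\id$ along every geodesic, that is, constant sectional curvature $-\bigl(h/(n-1)\bigr)^2$; conversely, a direct computation in real hyperbolic space of that curvature produces equality. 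The main obstacle I foresee is the careful bookkeeping needed for $\tr S_v'(0) = -h$: the stable trace is not explicitly displayed in the excerpt, and the sign depends on the identification of $v^\perp$ with $(-v)^\perp$ under time reversal.
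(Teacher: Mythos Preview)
Your proposal is correct and follows essentially the same route as the paper: Corollary~\ref{volgr} converts the limit into $1/\det(U_v'(0)-S_v'(0))$, the AM--GM inequality on the eigenvalues of this symmetric positive operator (with $\tr = 2h$) gives the bound, and in the equality case subtracting the two Riccati equations yields $U^2 = S^2$, from which $S_v'(0) = -\tfrac{h}{n-1}\id$ and constant curvature follow. Your treatment is in fact slightly more explicit than the paper's in two places---you separate off the degenerate case $\det(U_v'(0)-S_v'(0))=0$ and you spell out why $\tr S_v'(0)=-h$ via the relation $S_v'(0)=-U_{-v}'(0)$---but these are expository refinements, not a different argument.
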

\begin{proof}
Note, that for a given symmetric matrix $B$ on $\mathbb{R}^k$ with positive eigenvalues, we have
$(\det B)^{1/k} \le \frac{\tr B}{k}$, where equality holds if and only if $B = \lambda \id$.
Applying this to $B =( U_v'(0) -S_v'(0))$, we obtain from the theorem above that
$$
\lim\limits_{t \to \infty} \frac{f(t)}{e^{ht}} \ge \left( \frac{n-1}{2h} \right)^{n-1},
$$
where equality holds if and only if $( U_v'(0) -S_v'(0)) = \frac{2h}{n-1}\id$.
 Let's assume that equality holds. Consider $U(v) = U_v'(0)$ and $S(v) = S_v'(0)$ then they are both solutions of the Riccati equation. Subtracting the associated two Riccati equations, we obtain
$$
0 = U'(v)- S'(v) + U^2(v) - S^2(v) = U^2(v) - S^2(v)
$$
and hence,
$$
 U^2(v) = \left(\frac{2h}{n-1}\id + S(v) \right)^2  = \left(\frac{2h}{n-1}\right)^2\id + \frac{4h}{n-1}S(v) + S(v)^2.
$$
 Since $S^2(v) = U^2(v)$, this implies
 $$
 S(v)= -\frac{h}{n-1} \id
 $$
 and, therefore,
 $$
 R_v = - S(v)^2 =  -\left(\frac{h}{n-1}\right)^2 \id,
 $$
 where $R_v$ is the Jacobi operator given by $R_v(x) = R(x,v)v$. Hence, the sectional curvature is constant.
 \end{proof}

 \section{The rank of a harmonic manifold }
 The notion of rank has been introduced for general spaces of nonpositive curvature by
 Ballmann, Brin and Eberlein \cite{BBE} and is one of the central concepts in rigidity theory. This notion can be easily generalized to manifolds without
 conjugate points.
 \begin{dfn}
Let $M$ be a manifold without conjugate points.
The rank of $v \in SM$ is defined by
$$
\rank(v) = \dim \cal L (v) +1.
$$
where
$\cal L (v) = \ker (U_v'(0) -S_v'(0))$.
 The rank of $M$
is defined to be
$$
\rank(M) = \min \{  \rank(v) \mid v \in SM \}.
$$
\end{dfn}
>From corollary \ref{volgr} we immediately obtain.
\begin{cor}
Let $X$ be a noncompact simply connected harmonic manifold.
Then $X$ has purely exponential volume growth if an only if
the rank of $X$ is one.
\end{cor}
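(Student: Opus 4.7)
The plan is to read off the equivalence directly from Corollary \ref{volgr}, which already expresses the ratio $f(t)/e^{ht}$ as $1/\det(U_v'(0)-S_{v,t}'(0))$ and identifies its limit as $1/\det(U_v'(0)-S_v'(0))$ when this determinant is positive and as $+\infty$ otherwise, both quantities being independent of $v \in SX$. What remains is to translate this dichotomy into a statement about $\mathcal{L}(v)=\ker(U_v'(0)-S_v'(0))$.

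First I would verify that $U_v'(0)-S_v'(0)$ is a symmetric, positive semidefinite endomorphism of $v^\perp$. Symmetry follows from the Lagrangian property of $U_v$ and $S_v$: since $U_v(0)=S_v(0)=\id$, vanishing of the Wronskians $W(U_v,U_v)(0)$ and $W(S_v,S_v)(0)$ forces $U_v'(0)$ and $S_v'(0)$ to be self-adjoint. Positive semidefiniteness comes from letting $s\to\infty$ in the strict inequality $\langle(U_v'(0)-S_{v,s}'(0))x,x\rangle>0$ of Lemma \ref{sujac}. For such an operator, vanishing of the determinant is equivalent to having a nontrivial kernel, so
\[
\det(U_v'(0)-S_v'(0))>0 \;\Longleftrightarrow\; \mathcal{L}(v)=0 \;\Longleftrightarrow\; \rank(v)=1.
\]
Since Corollary \ref{volgr} says this determinant is independent of $v$, the set $\{v\in SX : \rank(v)=1\}$ is either all of $SX$ or empty, and $\rank(X)=1$ is equivalent to $\det(U_v'(0)-S_v'(0))>0$.

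For the last step I would combine this with the dichotomy in Corollary \ref{volgr}. If $\rank(X)=1$, then $F(t)=f(t)/e^{ht}$ tends to the finite limit $1/\det(U_v'(0)-S_v'(0))$, and because $F$ is strictly monotonically increasing we get $F(1)\le F(t)\le \lim_{s\to\infty}F(s)$ for all $t\ge 1$; setting $a=F(1)$ and $b=\lim F$ yields purely exponential volume growth. Conversely, purely exponential growth forces $F$ to be bounded above on $[1,\infty)$, hence $\lim F<\infty$, which in the dichotomy of Corollary \ref{volgr} means $\det(U_v'(0)-S_v'(0))>0$, giving $\rank(X)=1$.

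I do not anticipate any real obstacle here; the substantive analytic input has already been carried out in Lemma \ref{sujac} and Corollary \ref{volgr}. The one conceptual point to pin down is that the implication ``$\det=0 \Rightarrow \mathcal{L}(v)\neq 0$'' relies on positive semidefiniteness of $U_v'(0)-S_v'(0)$, which is why I would make the Lagrangian symmetry argument and the limit in Lemma \ref{sujac} explicit before invoking the rank equivalence.
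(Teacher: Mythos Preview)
Your proposal is correct and follows exactly the route the paper intends: the paper's own proof is the single line ``From corollary~\ref{volgr} we immediately obtain,'' and what you have written is a careful unpacking of that word ``immediately''---the symmetry and positive semidefiniteness of $U_v'(0)-S_v'(0)$, the resulting equivalence $\det>0\Leftrightarrow\mathcal{L}(v)=0$, and the monotonicity of $F$ giving the two-sided bound. There is no substantive difference in approach.
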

 The following lemma is wellknown.

 \begin{lemma} \label{3.1A}
 Let $M$ be a manifold without conjugate points whose sectional curvature is bounded from
 below by $- \beta^2$ for some $\beta \ge 0$. Then
 $$
| \langle U_v'(0)x, x \rangle | \le \beta \langle x, x \rangle \;\; \text{and}\; \; | \langle S_v'(0)x, x \rangle | \le \beta  \langle x, x \rangle
 $$
 for all $v\in SM$ and $x \in v^\perp$.
 \end{lemma}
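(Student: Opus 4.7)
The plan is to bound each quantity via the minimizing property of Jacobi fields for the index form on an interval of length $r$, then pass to the limit $r \to \infty$. Fix a unit parallel vector field $x_t$ along $c_v$ orthogonal to $\dot c_v$, with $x_0 = x$, and set $J_r(t) := U_{v,r}(t) x_t$. This is a Jacobi field with $J_r(-r) = 0$ and $J_r(0) = x$. Since $M$ has no conjugate points on $[-r,0]$, $J_r$ minimizes the index form
\[
I_{[-r,0]}(V, V) = \int_{-r}^{0} \bigl( |V'(t)|^2 - \langle R(t) V(t), V(t) \rangle \bigr) \, dt
\]
among piecewise smooth $V$ with $V(-r) = 0$ and $V(0) = x$, and a standard integration by parts shows $I_{[-r,0]}(J_r, J_r) = \langle J_r'(0), x \rangle = \langle U_{v,r}'(0) x, x \rangle$.

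Next, take the explicit test field $W(t) = \frac{\sinh(\beta(t+r))}{\sinh(\beta r)} x_t$, which has the same boundary values. Since $x_t$ is parallel, $|W'(t)|^2 = \beta^2 \cosh^2(\beta(t+r))/\sinh^2(\beta r)$, while the curvature bound $\langle R(t) x_t, x_t \rangle \ge -\beta^2$ gives $-\langle R(t) W, W \rangle \le \beta^2 \sinh^2(\beta(t+r))/\sinh^2(\beta r)$. Adding and using the identity $\cosh^2 + \sinh^2 = \cosh(2\,\cdot\,)$, one obtains $I_{[-r,0]}(W, W) \le \beta \coth(\beta r)$; minimality then yields $\langle U_{v,r}'(0) x, x \rangle \le \beta \coth(\beta r)$, and passing to $r \to \infty$ gives $\langle U_v'(0) x, x \rangle \le \beta \langle x, x \rangle$. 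An identical argument on $[0,r]$ with $\hat J_r = S_{v,r} x$ and test field $\frac{\sinh(\beta(r-t))}{\sinh(\beta r)} x_t$ produces $-\langle S_{v,r}'(0) x, x \rangle \le \beta \coth(\beta r)$, hence $\langle S_v'(0) x, x \rangle \ge -\beta \langle x, x \rangle$ in the limit.

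The matching reverse inequalities follow once $U_v'(0) - S_v'(0) \ge 0$ is established, which is already encoded in Lemma \ref{sujac}: the identity $(U_v'(0) - S_{v,s}'(0))^{-1} = \int_0^s (U_v^{\ast} U_v)^{-1}(u) \, du$ shows $U_v'(0) - S_{v,s}'(0) > 0$ for every $s > 0$, and letting $s \to \infty$ gives $U_v'(0) \ge S_v'(0)$ as symmetric endomorphisms. Combined with the two bounds already established, this yields $-\beta \langle x, x \rangle \le \langle S_v'(0) x, x \rangle \le \langle U_v'(0) x, x \rangle \le \beta \langle x, x \rangle$, which is the claim. The main technical ingredient is the minimizing property of Jacobi fields in the absence of conjugate points; the remainder is an explicit computation in the constant-curvature model together with a monotone limit.
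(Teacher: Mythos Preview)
Your argument is correct. The paper itself does not prove this lemma; it simply refers to \cite{Kn}, so there is no in-paper proof to compare against. The index-form comparison you give---minimizing among vector fields with prescribed boundary values, using the hyperbolic sine test field, then passing to $r\to\infty$---is one of the two standard routes to this estimate; the other is a direct comparison of the Riccati equation $U' + U^2 + R = 0$ against its constant-curvature counterpart, which is closer in spirit to what appears in \cite{Kn}. Both yield the one-sided bounds $\langle U_v'(0)x,x\rangle \le \beta$ and $\langle S_v'(0)x,x\rangle \ge -\beta$, and your use of Lemma~\ref{sujac} to close the remaining two inequalities via $U_v'(0) \ge S_v'(0)$ is clean. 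One cosmetic remark: when $\beta = 0$ the test field degenerates and should be read as the linear field $(t+r)x_t/r$, but the conclusion is unchanged.
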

\begin{proof}
For a proof see for instance \cite{Kn}.
\end{proof}

We also will need the following result of J.~Bolton \cite{Bo} which provides a sufficient condition
for a manifold without conjugate points that their geodesic flow is Anosov.
In the compact case this result has been obtained by P.~Eberlein \cite{Eb}.
In the noncompact case one has to specify a metric in order to define the Anosov condition.
A natural metric is the Sasaki metric.  Using the isomorphism
$$
(d\pi_v, C_v) : T_v TM
\to T_{\pi v} M \times  T_{\pi v} M \; \xi \to (d\pi_v(\xi) , C_v(\xi)) = (\xi_1, \xi_2),
$$
where $\pi: TM \to M$ is the canonical projection and $C_v: T_v TM \to  T_{\pi v} M $
is the connection map, one defines the Sasaki metric via
$$
\langle \xi, \eta \rangle := \langle \xi_1, \eta_1 \rangle + \langle \xi_2, \eta_2 \rangle.
$$
Then the geodesic flow  $\phi^t SM \to SM$ is Anosov with respect to the Sasaki metric if there exists a splitting
$$
T_vSM = E^s(v) \oplus E^u(v) \oplus E^c(v)
$$
and constants $a \ge 1$ and $b >0 $ such that for all $\xi \in E^s(v)$
$$
\| D\phi^t(v) \xi \| \le a \| \xi \| e^{-bt}, \; t \ge 0 \;\;  \text{and} \;\;  \| D\phi^t(v) \xi\| \ge \frac{1}{a} \| \xi \| e^{-bt}, \; t \le  0
$$
and for all $\xi \in E^u(v)$
$$
\| D\phi^t(v) \xi \| \ge \frac{1}{a} \| \xi \| e^{bt}, \; t \ge 0 \; \;\text{and}\; \; \| D\phi^t(v) \xi \| \le a \| \xi \| e^{bt}, \; \; t \le  0.
$$
\begin{theorem} \label{Bolt}
Let $M$ be a manifold without conjugate points and sectional curvature bounded from below. Then the geodesic flow  $\phi^t: SM \to SM$ is Anosov
if and only if there exists a constant $\rho >0$ such that
$$
\langle (U_{v}'(0) - S_{v}'(0))x , x \rangle \ge \rho \langle x, x \rangle
$$
for all $x \in v^\perp$.
\end{theorem}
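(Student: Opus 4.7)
\emph{Plan.} Under the Sasaki identification $(d\pi_v,C_v)$, each $\xi\in T_vSM$ splits as $(\xi_1,\xi_2)$ and $D\phi^t$ acts by $\xi\mapsto(J_\xi(t),J_\xi'(t))$, where $J_\xi$ is the Jacobi field along $c_v$ with $J_\xi(0)=\xi_1$, $J_\xi'(0)=\xi_2$. The natural candidate Anosov splitting is
\[E^s(v)=\{(x,S_v'(0)x):x\in v^\perp\},\qquad E^u(v)=\{(x,U_v'(0)x):x\in v^\perp\},\]
together with the flow direction $E^c(v)$; flow-equivariance $U_v'(t)U_v^{-1}(t)=U_{\phi^tv}'(0)$ (similarly for $S$) makes these $D\phi^t$-invariant. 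For the direction Anosov $\Rightarrow$ gap, if $\xi=(x,U_v'(0)x)\in E^u(v)$ then the unstable exponential expansion together with $|U_v'(t)x|\le\beta|U_v(t)x|$ (Lemma~\ref{3.1A}) forces $|U_v(t)x|\ge c\,e^{bt}|x|$ for $t\ge 0$, hence $(U_v^*U_v)^{-1}(t)\le Ce^{-2bt}\,\id$; integrating and invoking Lemma~\ref{sujac} yields
\[(U_v'(0)-S_v'(0))^{-1}=\int_0^\infty(U_v^*U_v)^{-1}(u)\,du\le \tfrac{C}{2b}\,\id,\]
which is the uniform gap.

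For the harder direction gap $\Rightarrow$ Anosov, I would introduce the Lyapunov function
\[\ell(t):=\langle w(t)J(t),J(t)\rangle,\qquad w(t):=U_v'(t)U_v^{-1}(t)-S_v'(t)S_v^{-1}(t),\]
along each Jacobi field $J$ of interest. By flow-equivariance $w(t)=U_{\phi^tv}'(0)-S_{\phi^tv}'(0)$, so the hypothesis together with Lemma~\ref{3.1A} gives the two-sided bound $\rho\,\id\le w(t)\le 2\beta\,\id$. The Riccati equations give $w'=(S_v'S_v^{-1})^2-(U_v'U_v^{-1})^2$, and for a stable Jacobi field $J=S_vx$ (so that $J'=(S_v'S_v^{-1})J$) a short calculation shows the cross terms collapse into a perfect square,
\[\ell'(t)=-|w(t)J(t)|^2\le-\rho^2|J(t)|^2.\]
Combined with $\ell\le 2\beta|J|^2$ this yields $\ell'\le-(\rho^2/2\beta)\ell$, exponential decay of $\ell$, and via $\rho|J|^2\le\ell$ the bound $|S_v(t)x|\le C|x|e^{-bt}$ with $b=\rho^2/(4\beta)$. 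The identical computation for an unstable field $J=U_vx$ produces $\ell'(t)=+|w(t)J(t)|^2$, forcing exponential growth of $|U_v(t)x|$. The bound $|J'|\le\beta|J|$ transfers these rates to the Sasaki metric, and the reverse-time Anosov estimates follow by applying the forward estimates at the shifted basepoint $\phi^{-t}v$ together with $D\phi^t$-invariance of $E^s,E^u$.

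\emph{Main obstacle.} The entire argument hinges on spotting the Lyapunov function $\ell=\langle wJ,J\rangle$. Working naively with $|J|^2$ fails, since under only a lower curvature bound the quantity $\langle (S_v'S_v^{-1})J,J\rangle$ is not uniformly negative, and the purely algebraic Wronskian identity $U_v^*(t)w(t)S_v(t)\equiv U_v'(0)-S_v'(0)$ only yields averaged (determinantal) exponential behaviour, insufficient for pointwise Anosov bounds. The decisive observation is that the contributions of $w'$ and of $2\langle wJ,J'\rangle$ combine into the perfect square $\pm|wJ|^2$, so that the pointwise hypothesis $w\ge\rho\,\id$ translates directly into pointwise exponential rates.
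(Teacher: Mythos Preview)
The paper does not prove this theorem at all: it is stated as a result of Bolton~\cite{Bo} and simply cited. So there is no ``paper's own proof'' to compare your attempt against; you are reconstructing Bolton's theorem from scratch.

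Your argument is essentially correct, and the Lyapunov function $\ell(t)=\langle w(t)J(t),J(t)\rangle$ is indeed the decisive idea. Let me confirm the key computation, since you only state the outcome. With $u=U_v'U_v^{-1}$, $s=S_v'S_v^{-1}$ and $J'=sJ$ one gets
\[
\ell'=\langle(s^2-u^2)J,J\rangle+2\langle swJ,J\rangle
=\langle(-u^2-s^2+2su)J,J\rangle,
\]
while $-w^2=-u^2-s^2+us+su$. The difference is $\langle(su-us)J,J\rangle$, which vanishes because $[s,u]$ is skew-symmetric (both $s$ and $u$ being symmetric). Hence $\ell'=-|wJ|^2$ exactly, and the chain $\ell'\le -\rho^2|J|^2\le -(\rho^2/2\beta)\ell$ gives the exponential decay of $|S_v(t)x|$ as you claim. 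The unstable case is identical with the sign flipped. Together with $|J'|\le\beta|J|$ from Lemma~\ref{3.1A} and the transversality $E^s\cap E^u=\{0\}$ (immediate from $\ker(U_v'(0)-S_v'(0))=0$), this yields the full Anosov splitting.

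One point you pass over in the Anosov $\Rightarrow$ gap direction: you assume without comment that the abstract Anosov unstable bundle coincides with $\{(x,U_v'(0)x):x\in v^\perp\}$. This is true but not tautological. The cleanest justification is that if $\xi\in E^u_{\mathrm{Anosov}}(v)$ then the associated Jacobi field $J_\xi$ has $|J_\xi(-t)|\to 0$; writing $J_\xi(t)=U_v(t)(\xi_1)_t+A_v(t)z_t$ with $z=\xi_2-U_v'(0)\xi_1$, the first summand stays bounded on $(-\infty,0]$ while the Anosov condition (via the uniform divergence estimate $\|A_v(t)z\|\ge e^{\alpha|t|}\|z\|$ that you need anyway, cf.\ the use of~\cite{Bo} in the proof of Theorem~\ref{mainthm1}) forces $z=0$. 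Once this identification is in hand, your integral estimate via Lemma~\ref{sujac} is fine.
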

We recall that W.~Klingenberg \cite{Kl} and R.~Ma\~{n}\'{e} \cite{Ma} (in a more general setting) proved that Riemannian metrics on compact manifolds do not have conjugate points if their geodesic flow is Anosov.\\

\begin{theorem} \label{Anosov}
Let $X$ be a noncompact simply connected harmonic manifold. Then the geodesic flow $\phi^t: SM \to SM$ is Anosov if and only if $\rank(X) =1$.
curvature.
\end{theorem}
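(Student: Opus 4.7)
The natural strategy is to route both directions through Bolton's criterion (Theorem~\ref{Bolt}), which characterizes the Anosov property for the geodesic flow, on a manifold without conjugate points and bounded sectional curvature, by the uniform lower bound $\langle (U_v'(0)-S_v'(0))x,x\rangle \ge \rho \langle x,x\rangle$ for some $\rho>0$ and all $v\in SX$, $x\in v^\perp$. Bounded sectional curvature is available here: for a harmonic manifold the density function $f$ is universal, so the characteristic polynomial of the Jacobi operator $R_v$ has $v$-independent coefficients, forcing the eigenvalues of $R_v$, and hence the sectional curvatures, to lie in a fixed compact interval. So the task reduces to showing that for $X$ noncompact simply connected harmonic, Bolton's uniform positivity is equivalent to $\rank(X)=1$.

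The direction ``Anosov $\Rightarrow$ $\rank(X)=1$'' is immediate: the uniform positivity forces $U_v'(0)-S_v'(0)$ to be nonsingular for every $v\in SX$, so $\mathcal{L}(v)=\{0\}$ and $\rank(v)=1$ for every $v$, hence $\rank(X)=1$.

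For the converse, assume $\rank(X)=1$. By definition some $v_0\in SX$ has $\det(U_{v_0}'(0)-S_{v_0}'(0))>0$, and Corollary~\ref{volgr} promotes this to the statement that $\det(U_v'(0)-S_v'(0))$ equals a common positive constant $c$ on all of $SX$. The tensor $U_v'(0)-S_v'(0)$ is symmetric and positive semi-definite (a standard consequence of the monotonicity of $U_{v,r}'(0)$ and $S_{v,s}'(0)$ in $r$ and $s$ recorded in Lemma~\ref{sujac}), while Lemma~\ref{3.1A} together with the curvature bound $-\beta^2$ forces $\|U_v'(0)-S_v'(0)\|\le 2\beta$ uniformly in $v$. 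Since the product of its $n-1$ nonnegative eigenvalues equals $c$ and each is at most $2\beta$, the smallest eigenvalue satisfies
$$
\lambda_{\min}(v) \;\ge\; \frac{c}{(2\beta)^{n-2}} \;=:\; \rho \;>\; 0
$$
uniformly in $v\in SX$, which is precisely Bolton's condition, yielding Anosov.

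The main obstacle is upgrading the pointwise nondegeneracy at one vector (which is what the definition of rank one supplies) to the uniform lower bound on the smallest eigenvalue required by Bolton. This upgrade is exactly where harmonicity enters essentially, via the $v$-independence of $\det(U_v'(0)-S_v'(0))$ from Corollary~\ref{volgr}; without this global identity, rank one at a single vector would yield no uniform estimate, even in the presence of the curvature bound.
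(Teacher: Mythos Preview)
Your proposal is correct and follows essentially the same route as the paper: both directions go through Bolton's criterion, using Corollary~\ref{volgr} to make $\det(U_v'(0)-S_v'(0))$ a positive constant independent of $v$, Lemma~\ref{3.1A} for the uniform upper bound on the eigenvalues, and hence a uniform positive lower bound on the smallest eigenvalue. The only difference is cosmetic: the paper cites Besse (Proposition~6.57) for boundedness of the sectional curvature, whereas you sketch an argument via $v$-independent curvature invariants of $R_v$; either way the needed bound holds and the rest of your argument matches the paper's.
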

\begin{proof}
Assume that $\rank(X) =1$. Since by corollary~\ref{volgr} the determinant of $(U_{v}'(0) - S_{v}'(0))$
is independent of $v \in SX$, we have $\rank(v) =1$ for all $v \in SX$. By
proposition~6.57 in \cite{Be} the sectional curvature of
  a harmonic manifold is bounded. Therefore, lemma~\ref{3.1A} implies that
the eigenvalues of the nonnegative endomorphism $(U_{v}'(0) - S_{v}'(0))$ are uniformly bounded from above.
Since $\det(U_v'(0) -S_v'(0)) = const >0$  the smallest eigenvalue of $(U_v'(0) -S_v'(0))$ is bounded
from below. Hence,  we conclude from theorem~\ref{Bolt} that the geodesic flow is Anosov.
The inverse assertion is an immediate consequence of ~\ref{Bolt}.
\end{proof}

The Anosov condition on harmonic manifolds admitting a compact quotient becomes particulary interesting if we combine it  with the rigidity  of \cite{BCG} together with \cite{BFL} and \cite{FL}.
\begin{theorem} \label{BCG}
Let $(M,g)$ be a compact Riemanian manifold such that the geodesic flow is Anosov. Assume that the mean curvature of the horospheres is constant. Then $(M,g)$ is isometric to a locally symmetric space $(M_0, g_0)$ of negative curvature.
\end{theorem}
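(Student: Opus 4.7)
The plan is to synthesize the three rigidity theorems \cite{BCG}, \cite{BFL}, and \cite{FL} via the scheme: reduce to the negatively curved setting, invoke BFL/FL to obtain a smooth conjugacy to a locally symmetric model, then upgrade to an honest isometry via BCG.

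First, since the geodesic flow on the compact manifold $M$ is Anosov, the Klingenberg/Ma\~{n}\'{e} theorem recalled just above yields that $M$ has no conjugate points, so the Jacobi-tensor calculus of Section~2 applies and the hypothesis reads $\tr U'_v(0) \equiv h$ on $SM$. Liouville's formula then gives
$$
\det U_v(t) = \det U_v(0) \cdot \exp \int_0^t \tr U'_{\phi^s v}(0)\, ds,
$$
so the asymptotic expansion rate of the unstable Jacobian along every orbit is exactly $h$. By the Pesin entropy formula this quantity equals the Liouville entropy $h_\mu$, while on the other hand $h_{\mathrm{top}} = \lim_{r \to \infty}(\log \vol S(p,r))/r = h$. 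Consequently $h_\mu = h_{\mathrm{top}}$, so the Liouville measure is the unique measure of maximal entropy.

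Second, this coincidence of Liouville and topological entropy for an Anosov geodesic flow, combined with the constant-trace Riccati condition, forces the strong stable and unstable foliations to have enough regularity to trigger \cite{BFL} and \cite{FL}: the geodesic flow of $(M,g)$ is smoothly conjugate to the geodesic flow of a compact locally symmetric space $(M_0,g_0)$ of strictly negative curvature; in particular $(M,g)$ is itself negatively curved. The minimal entropy rigidity theorem of \cite{BCG} then upgrades this smooth orbit equivalence, together with the equality of topological entropies already established, to an honest isometry $(M,g) \cong (M_0,g_0)$.

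The main obstacle is verifying the regularity hypotheses of BFL/FL. Concretely, one must argue that the condition $\tr U'_v(0) \equiv h$, together with the coincidence of entropies, propagates enough smoothness of the horospherical foliation — at worst $C^1$ with the transversality needed — to obtain a sufficiently regular Anosov splitting, at which point the BFL/FL dichotomy produces the locally symmetric model and BCG closes the gap between orbit equivalence and isometry.
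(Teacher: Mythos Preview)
Your overall architecture --- Foulon--Labourie, then Benoist--Foulon--Labourie, then Besson--Courtois--Gallot --- matches the paper's. The genuine gap is that you misidentify what \cite{FL} actually provides, and you insert an entropy detour that is neither necessary nor sufficient to fill the hole you leave.

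The content of \cite{FL} is precisely that on a compact manifold with Anosov geodesic flow and horospheres of constant mean curvature (what they call \emph{asymptotically harmonic}), the stable and unstable distributions $E^s$, $E^u$ are $C^\infty$. This is the input \cite{BFL} needs, and it comes \emph{directly} from the hypothesis --- there is no ``main obstacle'' to overcome. Your attempt to reach regularity through the equality $h_\mu = h_{\mathrm{top}}$ is a detour: coincidence of Liouville and Bowen--Margulis measures does not by itself yield $C^\infty$ Anosov splitting, and you correctly flag that you cannot close this step. The paper simply invokes \cite{FL} for smoothness, then \cite{BFL} for the smooth conjugacy to a locally symmetric $(M_0,g_0)$ of negative curvature.

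Two further points. First, the assertion that ``in particular $(M,g)$ is itself negatively curved'' is unjustified and unused: a smooth conjugacy of geodesic flows does not transport curvature bounds, and \cite{BCG} does not require $(M,g)$ to be negatively curved, only that it be homotopy equivalent to a negatively curved locally symmetric space. Second, to apply \cite{BCG} you need not only equal topological entropies but also equal volumes (equivalently, equality in the entropy--volume inequality); both follow from the smooth conjugacy produced by \cite{BFL}, and the paper states this explicitly, together with the Freir\'e--Ma\~n\'e identification of volume and topological entropy for metrics without conjugate points. Your sketch omits the volume ingredient.
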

\begin{proof}
>From the work of P.~Foulon and F.~Labourie \cite{FL} follows that the stable and unstable distribution $E^s$ and $E^u$ of a geodesic flow are $C^\infty$ provided the mean curvature of the horospheres is constant. The results of
Y.~Benoist, P.~Foulon and F.~Labourie imply that the geodesic flow on the unit tangent bundle of $(M,g)$ is smoothly conjugate to the geodesic flow on the unit tangent bundle of
a locally symmetric space $(M_0, g_0)$ of negative curvature. Furthermore, $M, M_0$ are homotopy equivalent and the topological entropy as well as  the volume of both
manifolds  $(M,g)$ and $(M_0, g_0)$ coincide.
Since by a result of A.~Freir\'{e} and R.~Ma\~{n}\'{e} \cite{FM} the volume entropy and the topological entropy for
metrics without conjugate points coincide, the work of G.~Besson, G.~Courtois and S.~Gallot implies
that $(M,g)$ and $(M_0, g_0)$ are isometric.
\end{proof}
We immediately obtain.
\begin{cor}
Let $X$ be a noncompact simply connected harmonic manifold with $\rank(X) =1$. If $X$  admits a compact quotient,
then $X$ is a symmetric space of negative
curvature.
\end{cor}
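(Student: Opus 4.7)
The plan is to assemble the corollary directly from Theorem~\ref{Anosov} and Theorem~\ref{BCG}. Let $M = X/\Gamma$ be a compact quotient, where $\Gamma$ acts freely, properly discontinuously, and isometrically on $X$. Since harmonicity is a local property, the quotient metric on $M$ is again harmonic, and in particular its horospheres (locally) have constant mean curvature equal to $h>0$, the same constant as in $X$. Note that $h>0$ because $\rank(X)=1$ forces purely exponential volume growth by corollary~\ref{volgr}, which is incompatible with $h=0$ (in which case the preceding proposition would make $X$ flat).

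Next I would transfer the Anosov property from $SX$ to $SM$. By Theorem~\ref{Anosov}, the assumption $\rank(X)=1$ implies that the geodesic flow $\phi^t : SX \to SX$ is Anosov with respect to the Sasaki metric, with the hyperbolic splitting given by the stable and unstable Jacobi tensors. Because $\Gamma$ acts by isometries, this splitting is $\Gamma$-invariant, and the Sasaki metric on $SX$ descends to the Sasaki metric on $SM$. Consequently the induced geodesic flow on the compact unit tangent bundle $SM$ inherits a hyperbolic splitting with the same exponential rates, so it is Anosov as well.

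Now $M$ is a compact Riemannian manifold whose geodesic flow is Anosov and whose horospheres have constant mean curvature. Theorem~\ref{BCG} applies directly and yields that $(M,g)$ is isometric to a locally symmetric space $(M_0,g_0)$ of negative curvature. Passing to universal covers, $X$ is isometric to the universal cover of $M_0$, which is a (necessarily rank-one) symmetric space of negative curvature, proving the claim.

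The only subtlety worth double-checking is the descent step: one must verify that the Sasaki metric and the hyperbolic splitting on $SX$ really are preserved by $\Gamma$ so that the quotient flow on $SM$ is Anosov in the standard compact sense, but this is immediate from the fact that $\Gamma$ acts by Riemannian isometries and the stable/unstable Jacobi tensors are geometrically defined along geodesics. All the analytic content is already in Theorems~\ref{Anosov} and \ref{BCG}, so the corollary is essentially a packaging statement.
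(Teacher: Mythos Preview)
Your argument is correct and follows exactly the route the paper intends: the paper simply writes ``We immediately obtain'' after Theorem~\ref{BCG}, meaning the corollary is a direct combination of Theorem~\ref{Anosov} (rank one implies Anosov) with Theorem~\ref{BCG} applied to the compact quotient. The additional details you supply about descent of the Anosov splitting and Sasaki metric under the $\Gamma$-action are accurate and merely make explicit what the paper leaves implicit.
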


\section{Gromov hyperbolic harmonic manifolds}

In this section we will show that for noncompact harmonic manifolds purely exponential volume growth is equivalent to Gromov hyperbolicity.
\begin{dfn} \label{G1}
Let $(X, d)$ be a metric space, $I \subset \mathbb{R}$ an Intervall. A curve $c: I \to X$
is called a geodesic, if $c$ is an isometry, i.e. $d(c(t), c(s))= |t-s| $ for $t, s \in I$.
A geodesic metric space $(X, d)$ is a  metric space where each pair of points can be joint by a geodesic.
\end{dfn}
\remark
Note that in Riemannian geometry geodesics are local isometries. Geodesics in the sense of metric spaces
correspond to minimal geodesics in Riemannian geometry.

There are several equivalent definition of Gromov hyperbolicity. The most common definition is the following.

\begin{dfn} \label{G2}
A geodesic metric space is called $\delta$-hyperbolic if all geodesic triangles are $\delta$-thin, i.e., each
side of a geodesic triangle is contained in the $\delta$-neighborhood of the two other sides.
A geodesic  metric space is called Gromov-hyperbolic if it is $\delta$-hyperbolic for some $\delta \ge 0$.
\end{dfn}

We want to show that Gromov hyperbolic harmonic manifolds have purely exponential volume growth. For that we will need the following elementary lemmata.
\begin{lemma}\label{G3}
Let $X$ be a simply connected manifold without conjugate points and $c_1,c_2: \mathbb{R} \to X$ be geodesics
with $c_1(0) = c_2(0)$ and $d(c_1(\pm \ell ,c_2( \mp \ell)) \le 1$ for $\ell > \frac{1}{2}$. Then
$$
d(c_1(t), c_2(s)) \ge 2 \ell -1 \quad \text{for all} \quad s, t \ge \ell.
$$
\end{lemma}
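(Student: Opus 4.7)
The plan is to run a pure triangle-inequality argument, with the only geometric input being that every geodesic in $X$ is globally minimizing. This follows from Cartan-Hadamard as recalled at the start of Section~2: since $X$ is simply connected without conjugate points, $\exp_p$ is a diffeomorphism, and hence for $i \in \{1,2\}$ one has $d(c_i(a), c_i(b)) = |b - a|$ for all $a,b \in \R$.

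Fix $s,t \ge \ell$. The first step is to bound $d(c_1(-\ell), c_2(s))$ from above by routing through $c_2(\ell)$:
\begin{equation*}
d(c_1(-\ell), c_2(s)) \le d(c_1(-\ell), c_2(\ell)) + d(c_2(\ell), c_2(s)) \le 1 + (s - \ell),
\end{equation*}
where the first summand uses the hypothesis and the second uses that $c_2$ is minimizing and $s \ge \ell$. Combined with the reverse triangle inequality through the same intermediate point $c_1(-\ell)$ and $d(c_1(-\ell), c_1(t)) = t + \ell$, this gives
\begin{equation*}
d(c_1(t), c_2(s)) \ge d(c_1(-\ell), c_1(t)) - d(c_1(-\ell), c_2(s)) \ge (t + \ell) - (1 + s - \ell) = 2\ell - 1 + (t - s).
\end{equation*}

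The symmetric argument, routing through $c_2(-\ell)$ and using the other hypothesis $d(c_1(\ell), c_2(-\ell)) \le 1$, yields
\begin{equation*}
d(c_1(t), c_2(s)) \ge 2\ell - 1 + (s - t).
\end{equation*}
Taking the larger of the two lower bounds gives $d(c_1(t), c_2(s)) \ge 2\ell - 1 + |t - s| \ge 2\ell - 1$, which is the claim. There is no real obstacle; the only point of care is to pair each hypothesis with the correct endpoint $c_i(\mp \ell)$ so that both one-sided bounds come out in parallel.
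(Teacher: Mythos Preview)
Your proof is correct and is essentially the same triangle-inequality chain as the paper's: the paper writes $d(c_1(-\ell),c_1(t)) \le d(c_1(-\ell),c_2(\ell)) + d(c_2(\ell),c_2(s)) + d(c_2(s),c_1(t))$ and splits into the cases $s\le t$ and $s\ge t$, which is exactly your two one-sided bounds rephrased. Your presentation via the reverse triangle inequality and the observation that one in fact gets $d(c_1(t),c_2(s)) \ge 2\ell -1 + |t-s|$ is a mild cosmetic improvement, but the argument is the same.
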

\begin{proof}
Consider $t, s \ge \ell$ and assume $s \le t$. Then
\begin{eqnarray*}
\ell + t &=& d(c_1( - \ell), c_1(t)) \le  d(c_1( - \ell), c_2( \ell)) \\
&& \quad +d(c_2( \ell), c_2(s)) +d(c_2(s), c_1(t)) \\
&\le& 1 + s- \ell +d(c_2(s), c_1(t))
\end{eqnarray*}
and therefore
$$
2 \ell -1 \le d(c_2(s), c_1(t))
$$
If $s \ge t$
\begin{eqnarray*}
\ell + s &=& d(c_2( - \ell), c_2(s)) \le  d(c_2( - \ell), c_1( \ell)) \\
&& \quad +d(c_1( \ell), c_1(t)) +d(c_1(t), c_1(s)) \\
&\le& 1 + t- \ell +d(c_1(t), c_2(s))
\end{eqnarray*}
and the assertion follows in this case as well.
\end{proof}

\begin{lemma}\label{G4}
Let $X$ be a simply connected $\delta$-hyperbolic manifold without conjugate points, $p \in X$
and $c:[0,a] \to X$ a geodesic. Let $c_1: [0, a_1] \to X$ be the geodesic joining $p$ and $c(0)$  and
 $c_2: [0, a_2] \to X$ be the geodesic joining $p$ and $c(a)$. Then there exist $t_1 \in [0, a_1]$,
 $t_2 \in [0, a_2]$ and $t_0 \in [0, a]$ such that
 $$
 d(c_1(t_1), c(t_0)) =  d(c_2(t_2), c(t_0)) \le \delta
 $$
\end{lemma}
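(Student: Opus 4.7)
The plan is to find $t_0$ by an intermediate value argument on the two distance functions from $c(t_0)$ to the other sides of the triangle, and then to use $\delta$-thinness only to upgrade the equality to a bound by $\delta$.

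More precisely, I would define the continuous functions $F,G:[0,a]\to[0,\infty)$ by
$$
F(t_0)=d(c(t_0),\,c_1([0,a_1])),\qquad G(t_0)=d(c(t_0),\,c_2([0,a_2])).
$$
Continuity holds because each side $c_i([0,a_i])$ is compact, so distance to it is $1$-Lipschitz in the foot point. The endpoints of the triangle give $F(0)=0$ (since $c_1(a_1)=c(0)$) and $G(a)=0$ (since $c_2(a_2)=c(a)$), hence the function $F-G$ satisfies $(F-G)(0)\le 0\le (F-G)(a)$. By the intermediate value theorem there exists $t_0\in[0,a]$ with $F(t_0)=G(t_0)$.

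Next I would invoke $\delta$-hyperbolicity. The triangle with vertices $p,c(0),c(a)$ has sides $c_1$, $c_2$, and $c$; by Definition \ref{G2}, every point on the side $c$ lies in the $\delta$-neighborhood of $c_1([0,a_1])\cup c_2([0,a_2])$. Applied at the point $c(t_0)$ this gives $\min(F(t_0),G(t_0))\le\delta$, and combined with the equality $F(t_0)=G(t_0)$ just produced, both values are $\le\delta$. Finally, since $c_1([0,a_1])$ and $c_2([0,a_2])$ are compact, the infima defining $F(t_0)$ and $G(t_0)$ are attained at some $t_1\in[0,a_1]$ and $t_2\in[0,a_2]$ respectively, and these satisfy
$$
d(c_1(t_1),c(t_0))=d(c_2(t_2),c(t_0))\le\delta.
$$

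There is no real obstacle here: the argument is a standard continuity/IVT plus $\delta$-thinness. The only point that requires a moment's care is the very first step, namely that the no-conjugate-points hypothesis is actually not needed for the statement itself (it only enters the larger context where this lemma will be applied); the lemma is a property of arbitrary geodesic $\delta$-hyperbolic spaces whose geodesic triangles have sides that are compact geodesic segments, which is exactly what a Cartan--Hadamard manifold (and hence $X$) provides.
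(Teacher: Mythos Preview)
Your proof is correct and follows essentially the same route as the paper: define the difference of the two distance-to-side functions along $c$, apply the intermediate value theorem to obtain $t_0$ with equal distances, and then use $\delta$-thinness of the triangle to bound the common value by $\delta$. If anything, your version is a bit more careful (nonstrict inequalities at the endpoints, explicit compactness to realize the infima), and your remark that the no-conjugate-points assumption is not needed for this lemma itself is accurate.
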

\begin{proof}
Consider the continuous function $f:[0, a] \to \mathbb{R}$ given by
$$
f(t) = d(c(t), c_1[0,a_1]) - d(c(t), c_2[0,a_2])
$$
Therefore, $f(0) < 0$ and $f(a) > 0$ which implies the existence of $t_0 \in [0,a]$ with
$$
0 = f(t_0) =  d(c(t_0), c_1[0,a_1]) - d(c(t_0), c_2[0,a_2])
$$
Since by assumption geodesic triangles are $\delta$-thin we have
$$
d(c(t_0), c_1[0,a_1]) \cup  c_2[0,a_2]) \le \delta
$$
we obtain
$$
d(c(t_0), c_1[0,a_1]) = d(c(t_0), c_2[0,a_2]) \le \delta
$$
which implies the assertion of the lemma.
\end{proof}

\begin{cor}\label{G5}
Let $X$ be a simply connected $\delta$-hyperbolic manifold without conjugate points. Let
$c_1,c_2: \mathbb{R} \to X$ be geodesics with $c_1(0) =c_2(0) =p$ and
 $d(c_1(\pm \ell),c_2( \mp \ell)) \le 1$ , where $\ell := \delta +1$. For $a_1, a_2 > \ell$ consider
the geodesic $c:[0,a] \to X$ joining $c_1(a_1)$ and $c_2(a_2)$. Then there exists $t_0 \in [0,a]$
such that
$$
d(p, c(t_0)) \le 2 \delta +1
$$

\end{cor}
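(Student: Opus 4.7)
The plan is to apply Lemmas~\ref{G3} and~\ref{G4} to the geodesic triangle with vertices $p = c_1(0) = c_2(0)$, $c_1(a_1)$ and $c_2(a_2)$; its three sides are the subsegments $c_1|_{[0,a_1]}$, $c_2|_{[0,a_2]}$, and the joining geodesic $c:[0,a] \to X$. This configuration is exactly the setup of Lemma~\ref{G4}, and the divergence condition $d(c_1(\pm\ell), c_2(\mp\ell)) \le 1$ with $\ell = \delta + 1$ is exactly the hypothesis of Lemma~\ref{G3}. The strategy is to use thinness of the triangle to locate a point of $c$ near both $c_1$ and $c_2$, then use divergence to force that point to lie close to $p$.

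First I would invoke Lemma~\ref{G4} to obtain parameters $t_0 \in [0,a]$, $t_1 \in [0,a_1]$, $t_2 \in [0,a_2]$ with
$$d(c_1(t_1), c(t_0)) = d(c_2(t_2), c(t_0)) \le \delta,$$
and therefore $d(c_1(t_1), c_2(t_2)) \le 2\delta$ by the triangle inequality. Next I would apply Lemma~\ref{G3}: if both $t_1 \ge \ell$ and $t_2 \ge \ell$, then $d(c_1(t_1), c_2(t_2)) \ge 2\ell - 1 = 2\delta + 1$, contradicting the $2\delta$ bound above. Hence, after relabelling if necessary, $t_1 < \ell = \delta + 1$.

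Since $c_1$ is unit-speed with $c_1(0) = p$, this gives $d(p, c_1(t_1)) = t_1 < \delta + 1$, and a last triangle inequality yields
$$d(p, c(t_0)) \le d(p, c_1(t_1)) + d(c_1(t_1), c(t_0)) < (\delta + 1) + \delta = 2\delta + 1,$$
which is the desired estimate. I do not anticipate any real obstacle: the two preceding lemmas were engineered for precisely this juxtaposition. The only small point to watch is the gap between $2\delta$ and $2\ell - 1 = 2\delta + 1$, which is what allows the dichotomy of Lemma~\ref{G3} to be invoked and delivers the final bound with the correct constant.
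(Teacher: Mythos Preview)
Your proof is correct and follows essentially the same route as the paper: invoke Lemma~\ref{G4} to find $t_0,t_1,t_2$ with $d(c_1(t_1),c(t_0))=d(c_2(t_2),c(t_0))\le\delta$, use Lemma~\ref{G3} to force $\min(t_1,t_2)<\ell$, and conclude via the triangle inequality. The only cosmetic difference is that you obtain the slightly sharper strict inequality $d(p,c(t_0))<2\delta+1$, which of course implies the stated bound.
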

\begin{proof}
By lemma \ref{G4} there exist  $t_1 \in c_1[0, a_1]$,
 $t_2 \in c_2[0, a_2]$ and $t_0 \in c[0, a]$ such that
 $$
 d(c_1(t_1), c(t_0)) =  d(c_2(t_2), c(t_0)) \le \delta
 $$
 and therefore
 $ d(c_1(t_1), c_2(t_2))  \le 2\delta = 2 \ell -2$. Then
 $ \min(t_1, t_2) \le \delta +1 = \ell$ since otherwise lemma \ref{G3} would imply
 that $d( c_1(t_1) , c_2(t_2)) \ge 2 \ell -1$ which obviously is a contradiction.
Assume $0 \le t_1 \le \delta +1$ we obtain
$$
d(p, c(t_0)) \le d(p, c_1(t_1)) +d(c_1(t_1), c(t_0)) \le 2 \delta +1
$$
which yields the assertion.
\end{proof}

Let $X$ be simply connected manifold without conjugate points and $v \in S_pX$. Consider for $ t \ge 0$ the function
$ b_{v,t}(q) = d(q, c_v(t)) -t$. Then for all $q \in X$ the limit
$$
b_v(q) = \lim\limits_{t \to \infty}  b_{v,t}(q)
$$
exists
and defines the Busemann function $b_v$ associated to the geodesic $c_v$.
The levels of the Busemann functions are the horospheres.
It is easy to see  that $b_v$ is a $C^1$ function \cite{Es}  with $\|\grad b_v \| =1$ and
one can even prove \cite{Kn} that they are of class $C^{1,1}$, i.e., the $\grad b_v$ is Lipschitz.
This implies that the integral curves of $\grad b_v $ are geodesics
and $|b_v(q) - b_v(p)| \le d(p,q)$. In the case of
 simply connected noncompact harmonic manifold one can show  \cite{RSh3} that
 Busemann functions are analytic. Note that $\Delta b_v =h$, where $h$ is the mean curvature of the horospheres.
\begin{cor}\label{G6}
Let $X$ be a simply connected $\delta$-hyperbolic manifold without conjugate points. Consider
for $v \in S_pX$, $\ell = \delta +1$ and $r >0$ the spherical cone in $X$ given by
$$
A_{v, \ell}(r) := \{ c_w(t) \mid  0 \le t \le r , w \in S_pX, d(c_v( \pm \ell), c_w( \pm \ell)) \le 1 \}
$$
Then, for $\rho= 4 \delta +2$ the set $A_{v, \ell}(r) $ is contained in
\begin{equation*}
\begin{split}
H_{v,\rho}(r) :=\{ & c_q(t) \mid -\rho/2  \le t \le r , \; c_q \; \; \text{is an integral curve of} \; \; \grad b_{-v} \\
&\text{with} \; \; c_q(0) = q \in b_{-v}^{-1}(0) \cap B(p,\rho) \}
\end{split}
\end{equation*}
\end{cor}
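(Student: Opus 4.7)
The plan is to apply Corollary \ref{G5} to the pair of geodesics $c_{-v}\colon \tau \mapsto c_v(-\tau)$ and $c_w$ in order to produce, for each point $c_w(t) \in A_{v,\ell}(r)$, a point close to $p$ on the $\grad b_{-v}$-integral curve through $c_w(t)$; this yields both the bound on the foot point $q \in b_{-v}^{-1}(0) \cap B(p,\rho)$ and the range of the arclength parameter. The starting observation is that the cone condition $d(c_v(\pm\ell), c_w(\pm\ell)) \le 1$ defining $A_{v,\ell}(r)$ is precisely the hypothesis $d(c_{-v}(\pm\ell), c_w(\mp\ell)) \le 1$ of Corollary \ref{G5} applied to the pair $(c_{-v}, c_w)$, so that corollary is directly applicable.

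Fix $c_w(t) \in A_{v,\ell}(r)$ with $t \ge \ell$, and let $\sigma$ be the integral curve of $\grad b_{-v}$ through $c_w(t)$, parametrized by arclength so that $b_{-v}(\sigma(s)) = s$ for all $s$; set $q := \sigma(0)$ and $s_0 := b_{-v}(c_w(t))$, so that $c_w(t) = \sigma(s_0)$. For each $a_1 > \ell$, Corollary \ref{G5} applied to the geodesic joining $c_v(-a_1)$ and $c_w(t)$ produces a point $y_{a_1}$ on that geodesic with $d(y_{a_1}, p) \le 2\delta+1$. As $a_1 \to \infty$, the segments from $c_w(t)$ to $c_v(-a_1)$ converge uniformly on compact intervals to the ray $s \mapsto \sigma(s_0 - s)$, $s \ge 0$, i.e.\ to the portion $\sigma((-\infty, s_0])$ traced in the direction $-\grad b_{-v}$; combined with $d(y_{a_1}, c_w(t)) \le t + 2\delta+1$ and the compactness of $\bar B(p, 2\delta+1)$, this provides a subsequential limit $y = \sigma(s_y)$ on that ray with $d(y, p) \le 2\delta+1$ and with $s_y \le s_0$ (the one-sided position being inherited from the ordering of points along the approximating segments). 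Because $b_{-v}$ is $1$-Lipschitz on $X$ and coincides with the arclength parameter along $\sigma$, it follows that $|s_y| \le d(y, p) \le 2\delta+1$, whence $d(q, y) = |s_y| \le 2\delta+1$ and $d(p, q) \le 4\delta+2 = \rho$; combining $s_y \le s_0$ with $s_y \ge -(2\delta+1)$ yields $s_0 \ge -\rho/2$, while $s_0 \le d(p, c_w(t)) = t \le r$ is immediate. The complementary case $0 \le t < \ell$ is handled trivially via $d(p, c_w(t)) \le t < \ell$: Lipschitzness of $b_{-v}$ gives $|s_0| \le t < \ell \le \rho/2$, and the triangle inequality along $\sigma$ gives $d(p, q) \le 2t < 2\ell \le \rho$.

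The main technical point is establishing the one-sided position $s_y \le s_0$ of the limit $y$ along $\sigma$; it is this, rather than the symmetric distance estimate $d(y, p) \le 2\delta+1$, that upgrades the trivial $|s_0| \le t$ provided by Lipschitzness of $b_{-v}$ to the asymmetric lower bound $s_0 \ge -\rho/2$ demanded by the target set $H_{v,\rho}(r)$. Once this ordering is in place, the rest is a clean computation combining Lipschitzness of $b_{-v}$ with the triangle inequality.
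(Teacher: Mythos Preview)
Your argument is correct and follows essentially the same route as the paper: apply Corollary~\ref{G5} to the pair $(c_{-v},c_w)$, pass to the limit along geodesics from $c_w(t)$ to $c_{-v}(a_1)$ to obtain a point on the $\grad b_{-v}$--integral curve within $2\delta+1$ of $p$, and then use $1$-Lipschitzness of $b_{-v}$ together with the triangle inequality. Your explicit treatment of the range $0\le t<\ell$ (where the hypothesis $a_2>\ell$ of Corollary~\ref{G5} fails) and your emphasis on the one-sided ordering $s_y\le s_0$ are points the paper handles only implicitly, so in these respects your write-up is slightly more careful.
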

\begin{proof}
For $c_w(t) \in A_{v, \ell}(r) $ there is a unique integral curve $c_q: \mathbb{R} \to X$ of $\grad b_{-v}$ such that $c_q(0) =q \in b_{-v}^{-1}(0)$
and $c_q(a) = c_w(t)$ for some $a \in \mathbb{R}$.
Let $c_{q ,s}$ be the sequence of geodesics with $c_{q ,s}(a) =c_q(a)$ and  $c_{q ,s}(b_s) = c_{-v}(s) $ for $b_s \le a$. Since
$$
d( c_{-v}(\mp \ell) ,  c_w( \pm \ell) =d( c_{v}(\pm \ell) ,  c_w( \pm \ell)) \le 1
$$
corollary \ref{G5} implies the existence of $x_s \in c_{q ,s}([b_s, a])$
such that $d(x_s, p) \le 2 \delta +1$. Note that
$
c_q = \lim\limits _{s \to \infty}c_{q ,s}.
$
Hence, there also exists $t_0 \le a$ such that
$d(c_q(t_0), p) \le 2 \delta +1$. Since $b_{-v}(c_q(t)) = t$ we obtain
$$
|t_0| = |b_{-v}(c_q(t_0))| \le |b_{-v}(c_q(t_0)) - b_{-v}(p)| \le d(c_q(t_0), p) \le 2 \delta +1
$$
and
$$
|a| = |b_{-v}(c_q(a))| \le |b_{-v}(c_w(t))) - b_{-v}(p)| \le d(c_w(t) , p) \le t.
$$
Therefore,
$$
d(p,q) = d(p, c_q(0)) \le d(p, c_q(t_0) )+ d(c_q(t_0), c_q(0)) \le 4 \delta +2.
$$
and  $-2\delta -1 \le t_0 \le a  \le t \le r$.
\end{proof}
Now we can prove the final result of this section.
\begin{theorem}\label{geocond}
Let $X$ be a simply connected noncompact harmonic manifold.
Then $X$ is $\delta$-hyperbolic for some $\delta >0$ if
the volume growth is purely exponential.
 Then $X$ has rank~1 and the geodesic flow is Anosov.
If $M$ is a compact harmonic manifold
with Gromov hyperbolic fundamental group, $M$ is a locally symmetric space of negative curvature.

\end{theorem}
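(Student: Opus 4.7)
The theorem asserts the equivalence of Gromov hyperbolicity with the three properties studied in Section~3 (purely exponential volume growth, rank one, Anosov geodesic flow), together with a rigidity statement for compact quotients. My plan is to prove both directions of the equivalence separately, then invoke Theorem~\ref{BCG} for the compact case.

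For the implication (Gromov hyperbolic) $\Rightarrow$ (purely exponential volume growth), most of the geometric work is already packaged in Corollary~\ref{G6}: each spherical cone $A_{v,\ell}(r)$ sits inside the horospherical slab $H_{v,\rho}(r)$ with $\rho = 4\delta+2$. The slab is the image of $(b_{-v}^{-1}(0) \cap B(p,\rho)) \times [-\rho/2, r]$ under the horospherical flow $(q,t) \mapsto c_q(t)$; since $\Delta b_{-v} = h$, the Jacobian of this flow grows like $e^{ht}$, so
\begin{equation*}
\vol H_{v,\rho}(r) \le \area\bigl(b_{-v}^{-1}(0) \cap B(p,\rho)\bigr) \int_{-\rho/2}^{r} e^{ht}\, dt \le C(\rho, h)\, e^{hr}.
\end{equation*}
Compactness of $S_pX$ yields a finite cover of $S_pX$ by closeness neighborhoods $\{w : d(c_v(\pm \ell), c_w(\pm \ell)) \le 1\}$, so finitely many cones $A_{v_i,\ell}(r)$ cover $B(p,r)$, giving $\vol B(p,r) \le N\, C(\rho,h)\, e^{hr}$. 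Combining this integral bound with the monotonicity of $F(t) = f(t)/e^{ht}$ from Corollary~\ref{volgr} yields a pointwise estimate $f(r) \le b\, e^{hr}$, which together with the lower bound $a\, e^{hr} \le f(r)$ of Corollary~\ref{volgr} is purely exponential growth.

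For the converse, assume purely exponential volume growth. Then Theorem~\ref{Anosov} gives an Anosov geodesic flow on $SX$ with uniform exponential rate $b > 0$. Combined with the two-sided sectional curvature bound for harmonic manifolds (Proposition~6.57 in~\cite{Be}) and Lemma~\ref{3.1A}, the unstable Jacobi tensor $U_v$ grows at least like $e^{bt}$ along every geodesic, so any two unit-speed geodesics emanating from a common point with different initial directions diverge at least like $e^{bt}$ for large $t$. Such uniform exponential divergence of geodesics on a simply connected manifold without conjugate points forces all geodesic triangles to be $\delta$-thin for some universal $\delta$: if a triangle had a side far from the other two, then two of the sides would have to stay close over a long interval while eventually separating by $2\delta + 1$, contradicting the exponential divergence. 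Hence $X$ is $\delta$-hyperbolic.

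For the compact quotient statement, if $M = X/\Gamma$ is compact harmonic with $\Gamma = \pi_1(M)$ Gromov hyperbolic, the \v{S}varc--Milnor lemma makes $X$ quasi-isometric to $\Gamma$, and since Gromov hyperbolicity is a quasi-isometry invariant, $X$ itself is Gromov hyperbolic. By the equivalence just established, the geodesic flow on $SX$ (and on its compact quotient $SM$) is Anosov; as $M$ is harmonic, the mean curvature of horospheres is the constant $h$, so Theorem~\ref{BCG} applies and $M$ is isometric to a locally symmetric space of negative curvature. The main obstacle I anticipate is the converse direction: the Anosov condition is phrased through the Sasaki metric on $SX$, whereas Gromov hyperbolicity is a property of the Riemannian distance on $X$, and translating stable/unstable Jacobi tensor estimates into genuine metric divergence statements — uniformly in the base point, and only assuming no conjugate points rather than sectional curvature $\le 0$ — is the delicate part of the argument.
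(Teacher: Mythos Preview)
Your proposal is essentially correct and close to the paper's argument, with two differences worth flagging.

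For the implication (Gromov hyperbolic) $\Rightarrow$ (purely exponential growth), the paper avoids your finite-cover step entirely: since $X$ is harmonic, the density $f$ is radial, so the volume of a \emph{single} cone already factors as
\[
\vol A_{v,\ell}(r)=\mu_p(C_{v,\ell})\int_0^r f(s)\,ds,
\]
and comparing this with $\vol H_{v,\rho}(r)\le h^{-1}e^{hr}\vol_0\bigl(b_{-v}^{-1}(0)\cap B(p,\rho)\bigr)$ gives the integral bound directly. Your covering of $S_pX$ works too but is unnecessary. Your explicit use of the monotonicity of $F(t)=f(t)/e^{ht}$ to pass from the bound on $\int_0^r f(s)\,ds$ to a bound on $f(r)$ is exactly the mechanism the paper invokes (without spelling it out).

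For the converse direction, which the paper actually treats separately in Theorem~\ref{mainthm1}, your anticipated obstacle is real and the paper resolves it as follows. First, from the Anosov condition one gets, via Bolton~\cite{Bo}, the estimate $\|A_v(t)x\|\ge e^{\alpha t}\|x\|$ for all $t\ge 0$; note this is the tensor $A_v$ with $A_v(0)=0$, not $U_v$ as you wrote, since geodesics issuing from a common point $q$ spread according to $A_v$ through $d\exp_q$. Second, this Jacobi estimate yields $\liminf_{t\to\infty} d_t^q(c_1(t),c_2(t))/t\ge\alpha>0$ for the detour distance outside $B(q,t)$, and Proposition~III.H.1.26 of Bridson--Haefliger~\cite{BH} converts exactly this superlinear divergence condition into Gromov hyperbolicity, handling the base-point uniformity and the absence of a curvature sign that concerned you. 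Your informal ``two sides close over a long interval'' sketch is the right intuition but is not a proof; the packaged divergence criterion in~\cite{BH} is what makes it rigorous.

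Your compact-quotient argument via \v{S}varc--Milnor and Theorem~\ref{BCG} matches the paper's.
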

\begin{proof}
Consider for $\ell = \delta +1$ and $v \in S_pX$ the set
$$
A_{v, \ell}(r) := \{ c_w(t) \mid  0 \le t \le r, \ w \in S_pX, \ d(c_v( \pm \ell), c_w( \pm \ell)) \le 1 \}
$$
Then
$$
\vol( A_{v, \ell}(r)) = \int\limits_0^r f(s) ds  \  \mu_p (C_ {v, \ell})
$$
where
$$
C_ {v, \ell}:= \{ w \in S_pX \mid  d(c_v( \pm \ell), c_w( \pm \ell)) \le 1  \}
$$
and  $\mu_p$ denotes the measure on the sphere $S_pX$ induced by Riemannian metric.
Corollary \ref{G6} implies that for $\rho= 4 \delta +2$ the set $A_{v, \ell}(r)$ is contained in
\begin{equation*}
\begin{split}
H_{v,\rho}(r) :=\{ & c_q(t) \mid -\rho/2  \le t \le r , \; c_q \; \; \text{is an integral curve of} \; \; \grad b_{-v} \\
&\text{with} \; \; c_q(0) = q \in b_{-v}^{-1}(0) \cap B(p,\rho) \}
\end{split}
\end{equation*}
Furthermore,
\begin{eqnarray*}
\vol(H_{v,\rho}(r) ) &=&\int\limits_{-\rho/2}^{r} e^{hs} ds \vol_{0}( b_v^{-1}(0) \cap B(p,\rho)) \\
&\le& \frac{e^{hr}}{h}\vol_{0}( b_v^{-1}(0) \cap B(p,\rho)),
\end{eqnarray*}
where $\vol_{0}$ denotes the induced volume on the horosphere $b_v^{-1}(0)$.
Therefore,
$$
\int\limits_0^r f(s) ds  \  \mu_p (C_ {v, \ell})
\le  \frac{e^{hr}}{h}\vol_{0}( b_v^{-1}(0) \cap B(p,\rho))
$$
and the ratio
 $$
 \frac{\int\limits_0^r f(s) ds}{e^{hr}}
 $$
 is bounded above by a constant. Therefore, the ratio
 $$
 \frac{ f(r) }{e^{hr}}
 $$
 is bounded from above as well and
 by corollary~\ref{volgr} this implies that $X$ purely exponential volume growth.\\
 \end{proof}
 Now we are able to prove the following main result on simply connected noncompact harmonic manifolds stated in the introduction
 \begin{theorem}\label{mainthm1}
Let $X$ be a simply connected noncompact harmonic manifold. Then the following assertion are equivalent:
\begin{enumerate}
\item[(i)]
$X$ is Gromov hyperbolic.
\item[(ii)]
 $X$ has purely exponential volume growth.
 \item[(iii)]
  $X$ has rank one.
  \item[(iv)] $X$ has an Anosov geodesic flow with respect to the Sasaki metric
\end{enumerate}
\end{theorem}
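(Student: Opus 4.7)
The equivalences (ii)$\iff$(iii), (iii)$\iff$(iv), and the implication (i)$\Rightarrow$(ii) are already established in the paper: (ii)$\iff$(iii) is the corollary following Corollary~\ref{volgr}; (iii)$\iff$(iv) is Theorem~\ref{Anosov}; and (i)$\Rightarrow$(ii) is Theorem~\ref{geocond}. Thus the chain (i)$\Rightarrow$(ii)$\iff$(iii)$\iff$(iv) is in hand, and all that remains is to close the cycle with one implication back to (i). I would establish (iv)$\Rightarrow$(i).

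My plan for (iv)$\Rightarrow$(i): by Theorem~\ref{Bolt}, the Anosov hypothesis is equivalent to the uniform coercivity $\langle (U_v'(0)-S_v'(0))x, x\rangle \ge \rho\,|x|^2$ for all $v \in SX$ and $x \in v^\perp$, with $\rho > 0$ independent of $v$. A standard Riccati comparison argument, combined with the uniform sectional-curvature bound of Proposition~6.57 of \cite{Be} (as exploited in Lemma~\ref{3.1A}), upgrades this to operator-norm estimates $\|S_v(t)\| \le C e^{-\rho t}$ for $t \ge 0$ together with the dual bound for $U_v(-t)$. Geometrically, two asymptotic geodesics converge exponentially, whereas two geodesic rays emanating from a common basepoint with distinct ideal endpoints diverge exponentially. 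Combining these estimates with the $C^{1,1}$-regularity (in fact analyticity) of the Busemann functions already exploited in Section~4, I would first establish the visibility axiom---any two distinct ideal points are joined by a unique bi-infinite geodesic---and then verify that geodesic triangles are uniformly $\delta$-thin: the asymptotic ``tails'' near each ideal vertex are exponentially close by the stable-Jacobi estimate, while the bounded-geometry compact ``core'' of a triangle is controlled by the uniform curvature bound.

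The main obstacle is the absence of non-positive curvature, which precludes any direct appeal to CAT(0) comparison. Instead, one must convert the Jacobi-tensor estimates into $X$-distance estimates by essentially running the geometric comparison arguments of Section~4 (Lemmas~\ref{G3}--\ref{G5} and Corollary~\ref{G6}) in reverse: rather than deducing a volume bound from a given $\delta$-hyperbolicity, one must produce a uniform $\delta$ from the Anosov contraction/expansion rate. This requires a careful calibration of the exponential divergence constants against the geometry of horospheres and Busemann functions, using in an essential way the constant mean curvature $h > 0$ and the harmonic structure so that the relevant constants are uniform in the basepoint. With this calibration in place, $\delta$-hyperbolicity of $X$ for some $\delta$ depending only on $\rho$, $h$, and the curvature bound follows, completing the cycle.
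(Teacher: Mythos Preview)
Your reduction is correct: the paper has already established (ii)$\iff$(iii)$\iff$(iv) and (i)$\Rightarrow$(ii), so only an implication back to (i) is needed. Your choice of (iv)$\Rightarrow$(i) matches the paper's, but the execution differs.

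The paper's argument is much shorter than what you outline. From the Anosov condition (via \cite{Bo}) one gets the uniform exponential lower bound $\|A_v(t)x\|\ge e^{\alpha t}\|x\|$ for the Jacobi tensor $A_v$ with $A_v(0)=0$, $A_v'(0)=\id$. This translates immediately into an exponential divergence estimate for the sphere-distance $d^q_t(c_1(t),c_2(t))$ between any two distinct geodesic rays issuing from a common point $q$. The paper then simply invokes Proposition~III.H.1.26 of Bridson--Haefliger~\cite{BH}, which states that a geodesic space in which geodesic rays diverge in this sense is Gromov hyperbolic. That single citation replaces your entire visibility-plus-thin-triangles program.

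Your route---deriving $\|S_v(t)\|\le Ce^{-\rho t}$, establishing visibility, and then checking $\delta$-thinness by splitting each triangle into exponentially close ``tails'' and a ``compact core''---is the classical picture in strictly negative curvature, but the step you flag as the main obstacle is genuinely the hard one. Without a CAT(0) comparison, there is no a priori uniform bound on the diameter of the core of a large triangle; the uniform curvature bound alone does not give this. What actually controls the core size is precisely the exponential divergence of geodesics, and extracting $\delta$-thinness from that divergence is exactly the content of the Bridson--Haefliger proposition. So your plan is not wrong, but carried out honestly it amounts to reproving that proposition rather than citing it; the paper takes the shortcut.
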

\begin{proof}
  (i) implies (ii) by the previous theorem.
  The equivalence of (ii), (iii), and $(iv)$ has been obtained in the previous section.\\
  Assume now that the geodesic flow $\phi^t:SX \to SX$ is Anosov
  with respect to the Sasaki metric. For $v \in SX$ consider the Jacobi tensor with
   $A_v(0) = 0$ and $A_v'(0) = \id$. Then the Anosov condition implies
(see \cite{Bo} )
$$
\|A_v(t)x \|\ge \|x\| e^{\alpha t}
$$
Consider two distinct geodesic rays $c_1:[0, \infty) \to X$ and 
$c_2:[0, \infty) \to X$ with $c_1(0) =c_2(0) = q$ and define
\begin{eqnarray*}
d^q_t(c_1(t), c_2(t)) &:= &\inf \{ L(\gamma) \mid  \;
\gamma:[a,b] \to  X \setminus B(q,t) \\
&& \text{ a piecewise smooth curve
joining } \; c_1(t) \text{ and } c_2(t) \}.
\end{eqnarray*}
Then 
$$
\liminf_{t \to \infty}\frac{d^q_t(c_1(t), c_2(t))}{t} \ge \alpha
$$
Using proposition 1.26 in chapter III of \cite{BH} this implies that
$X$ is 
  Gromov hyperbolic.
  \end{proof}
  The following theorem was known in the case of negative curvature
 \begin{theorem}
 Let $M$ be a compact harmonic manifold
with Gromov hyperbolic fundamental group.
 Then $M$ is a locally symmetric space of negative sectional curvature.
 \end{theorem}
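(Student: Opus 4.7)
The plan is to reduce the statement to the two main results already established in the paper — Theorem~\ref{mainthm1} on the universal cover and the rigidity Theorem~\ref{BCG} — via a standard quasi-isometry argument passing from $\pi_1(M)$ to the universal cover.

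Let $X$ denote the universal Riemannian cover of $M$ and $\Gamma = \pi_1(M)$, acting on $X$ by deck transformations. These transformations are isometries, and because $M$ is compact the action is properly discontinuous and cocompact. By the \v{S}varc-Milnor lemma the metric spaces $(X,d)$ and $(\Gamma, d_{\text{word}})$ are then quasi-isometric. Since Gromov hyperbolicity is a quasi-isometry invariant among geodesic metric spaces, the hypothesis that $\Gamma$ is Gromov hyperbolic forces $X$ to be Gromov hyperbolic as well. We may also assume that $X$ is not flat, since otherwise $X = \R^n$ with $n = \dim M \ge 2$ would force $\Gamma$ to be virtually $\Z^n$, which is not Gromov hyperbolic (the remaining low-dimensional cases are trivial). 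In particular the mean curvature of the horospheres satisfies $h > 0$, by the proposition in Section~2.

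Once $X$ is known to be Gromov hyperbolic, Theorem~\ref{mainthm1} applies and yields that $X$ has rank one and that the geodesic flow $\phi^t : SX \to SX$ is Anosov with respect to the Sasaki metric. The Sasaki metric and the invariant splitting $E^s \oplus E^u \oplus E^c$ are $\Gamma$-equivariant, so the flow descends to a classical Anosov flow on the compact unit tangent bundle $SM = SX/\Gamma$. Because $M$ is harmonic, the mean curvature of the horospheres is constant. We are now exactly in the situation of Theorem~\ref{BCG}: a compact Riemannian manifold with Anosov geodesic flow and constant mean curvature of horospheres. That theorem concludes that $(M,g)$ is isometric to a locally symmetric space of negative sectional curvature, which is the desired statement.

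Thus no genuinely new ingredient is needed; the argument is an assembly of Theorems~\ref{mainthm1} and~\ref{BCG}. The single step that draws on material outside the paper is the transfer of Gromov hyperbolicity from $\Gamma$ to $X$, which rests on \v{S}varc-Milnor together with the quasi-isometry invariance of hyperbolicity. This is classical and should be the only point at which a careful reader would want to pause; everything else is a direct consequence of the preceding sections.
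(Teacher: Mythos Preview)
Your proof is correct and follows essentially the same route as the paper's own argument: transfer Gromov hyperbolicity from $\pi_1(M)$ to the universal cover $X$ (the paper states this in a single clause without naming \v{S}varc--Milnor), invoke Theorem~\ref{mainthm1} to get an Anosov geodesic flow, and conclude via Theorem~\ref{BCG}. Your treatment is in fact more careful than the paper's three-line proof, since you make the quasi-isometry step explicit and dispose of the flat case.
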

\begin{proof}
Since the fundamental group is Gromov hyperbolic the universal cove is
Gromov hyperbolic as well. Hence, the
geodesic flow is Anosov and therefore, $M$ is a locally symmetric space of negative curvature.
 \end{proof}

We strongly believe that noncompact harmonic, nonflat manifolds with $h>0$ and higher rank do not exists.
The main purpose of the next section is to prove this under the assumption of no focal points.\\
\section{Harmonic manifolds with bounded asymptote}

In this section, we show now that for a harmonic manifold of bounded asymptote the rank is constant, i.e., independent of the geodesic.
In odd dimensions this implies by a result of N.E.~Steenrod and J.H.C. Whitehead \cite{SW} that the rank is 1.
\begin{dfn}
Let $M$ be a manifold without conjugate points and $\alpha \ge 1$. A geodesic $c_v: \mathbb{R} \to M$ is called $\alpha$-stable if
$$
\|S_v(t)x_t \| \le  \alpha\| x \|      \; \; \text{and} \; \;     \|U_v(t)x_t \| \ge \frac{1}{\alpha} \| x \|,
$$
for all $t \ge 0$ and parallel vector fields $x_t$  with $x_0 = x \in  v^\perp$. We call a geodesic stable if it is $\alpha$-stable for some constant
$\alpha \ge 1$.\\
$M$ is called of bounded asymptote if there is uniform constant $\alpha \ge 1$ for which all geodesics are
$\alpha$-stable.

\end{dfn}
\begin {remark}
The notion of bounded asymptote has been introduced by J.-H.~Eschenburg \cite{Es}. In particular, if $M$ has nonpositive curvature or, more generally, no focal points, each geodesic is $1$-stable.
\end {remark}
\begin{prop} \label{constrank}
Let $X$ be a noncompact simply connected harmonic manifold. Suppose there exists a $\alpha(v)$-stable geodesic $c_v$ with $\rank(v) =k+1 \ge 2$. Then for all $x \in \ker( U_{v}'(0) - S_{v}'(0))$
$$
  \frac{1}{\alpha^2(v) t} \langle x,  x \rangle  \le \langle (U_{v}'(0) - S_{v,t}'(0))x, x \rangle \le \frac{\alpha^2(v)}{t} \langle x,  x \rangle.
 $$
 Let
 $$
 \lambda_1(v,t) \le \ldots \lambda_k(v,t) \le \lambda_{k+1}(v,t) \le \ldots \le \lambda_{n-1}(v,t)
 $$
 be the eigenvalues of $ U_{v}'(0) - S_{v,t}'(0)$ and
 $\beta_t(v) :=  \lambda_{k+1}(v,t) \cdot \ldots \cdot  \lambda_{n-1}(v,t)$, then $\beta_t(v)$ is monotonically decreasing and converging to the product of the positive eigenvalues  $\beta(v)$ of $ U_{v}'(0) - S_{v}'(0)$. Furthermore,
 $$
 \frac{\beta(v)}{\alpha^{2k}(v)} \le \det (U_{v}'(0) - S_{v,t}'(0)) t^k\le  \alpha^{2k}(v) \beta_t(v)
 $$
 \end{prop}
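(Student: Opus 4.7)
The strategy is to base everything on Lemma~\ref{sujac}, which gives $A(t)^{-1} = \int_0^t (U_v^{\ast} U_v)^{-1}(u)\,du$ where $A(t) := U_v'(0) - S_{v,t}'(0)$, combined with $\alpha(v)$-stability and a Riccati-type differential inequality.

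For the first displayed inequality, the lower bound is the easy half: $\alpha$-stability gives $\|U_v(u)y\| \ge \|y\|/\alpha$ for all $y \in v^\perp$ and $u \ge 0$, hence $\|U_v^{-\ast}(u)\|_{op} \le \alpha$, so $\langle A(t)^{-1}x,x\rangle = \int_0^t \|U_v^{-\ast}(u)x\|^2\,du \le \alpha^2 t\|x\|^2$ for every $x \in v^\perp$; the Cauchy--Schwarz inequality $\|x\|^4 \le \langle A(t)x,x\rangle\langle A(t)^{-1}x,x\rangle$ then yields $\langle A(t)x,x\rangle \ge \|x\|^2/(\alpha^2 t)$. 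For the matching upper bound on $x \in \mathcal{L}(v)$, I would differentiate the integral identity to obtain $\frac{d}{dt}A(t) = -A(t)(U_v^{\ast}U_v)^{-1}(t)A(t)$, and, setting $\phi(t) := \langle A(t)x,x\rangle$, get $\phi'(t) = -\|U_v^{-\ast}(t)A(t)x\|^2$. The crucial observation is that on $\mathcal{L}(v)$ the two Jacobi fields $U_v(\cdot)x$ and $S_v(\cdot)x$ coincide, so $\alpha$-stability yields $\|U_v(t)x\| \le \alpha\|x\|$; from the trivial identity $\phi(t) = \langle U_v^{-\ast}(t)A(t)x,\,U_v(t)x\rangle$ and Cauchy--Schwarz one gets $\|U_v^{-\ast}(t)A(t)x\|^2 \ge \phi(t)^2/(\alpha^2\|x\|^2)$, producing the Riccati-type inequality $(1/\phi)'(t) \ge 1/(\alpha^2\|x\|^2)$. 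Since $A(t_0)^{-1} \sim t_0\,\id$ forces $\phi(t_0) \to \infty$ as $t_0 \to 0^+$, integration from $0$ to $t$ yields $\langle A(t)x,x\rangle \le \alpha^2\|x\|^2/t$.

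For the second block, the strict decrease of $\langle A(t)x,x\rangle$ in $t$ (already recorded in the proof of Lemma~\ref{sujac}) combined with Weyl's monotonicity principle makes each eigenvalue $\lambda_i(v,t)$ monotonically non-increasing. Since $A(t) \to U_v'(0) - S_v'(0)$, whose kernel has dimension $k$, the bottom $k$ eigenvalues collapse to $0$ while the top $n-1-k$ converge to the positive eigenvalues of $U_v'(0)-S_v'(0)$, giving $\beta_t(v) \searrow \beta(v)$. Courant--Fischer converts the first inequality into $\lambda_i(v,t) \in [1/(\alpha^2 t),\,\alpha^2/t]$ for $1 \le i \le k$, so $t^k\prod_{i=1}^k\lambda_i(v,t) \in [\alpha^{-2k},\,\alpha^{2k}]$; multiplying by $\beta_t(v)$ and using $\beta_t(v) \ge \beta(v)$ on the lower side produces the stated bracket.

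The main subtlety lies in the Riccati step: both the Cauchy--Schwarz estimate $\|U_v^{-\ast}(t)A(t)x\|^2 \ge \phi(t)^2/(\alpha^2\|x\|^2)$ and the initial-time behavior $\phi(t_0) \to \infty$ (from the local expansion $A(t_0)^{-1} \approx t_0\,\id$) rely essentially on the fact that $U_v(\cdot)x$ and $S_v(\cdot)x$ agree and are trapped between $\|x\|/\alpha$ and $\alpha\|x\|$ along $\mathcal{L}(v)$; without this coincidence one would only recover the lower half of the first inequality.
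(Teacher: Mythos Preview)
Your argument is correct, and the lower bound together with the eigenvalue discussion are essentially identical to the paper's treatment (you phrase the lower bound via the Cauchy--Schwarz inequality $\|x\|^4 \le \langle A(t)x,x\rangle\langle A(t)^{-1}x,x\rangle$, the paper via the spectral estimate $\langle A(t)x,x\rangle \ge \|A(t)^{-1}\|^{-1}$, but these amount to the same thing).

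The upper bound, however, is obtained by a genuinely different route. The paper never differentiates $A(t)$. Instead it invokes the \emph{second} pair of identities in Lemma~\ref{sujac},
\[
(S_v'(0) - S_{v,t}'(0))^{-1} = \int_0^t (S_v^{\ast} S_v)^{-1}(u)\,du,
\]
and, using the elementary operator estimate $\bigl\|\bigl(\int_0^t B(u)\,du\bigr)^{-1}\bigr\| \le \bigl(\int_0^t \|B(u)^{-1}\|^{-1}\,du\bigr)^{-1}$ recorded in the appendix, gets $\langle (S_v'(0) - S_{v,t}'(0))x,x\rangle \le \alpha^2/t$ directly from $\|S_v(u)\| \le \alpha$. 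For $x \in \mathcal{L}(v)$ one has $\langle (U_v'(0)-S_v'(0))x,x\rangle = 0$, so this is already the desired upper bound on $\langle A(t)x,x\rangle$. Your Riccati argument instead stays entirely within the $U_v$-integral formula: you differentiate to get $\phi' = -\|U_v^{-\ast}(t)A(t)x\|^2$, then use the identity $\phi(t) = \langle U_v^{-\ast}(t)A(t)x,\,U_v(t)x_t\rangle$ and the bound $\|U_v(t)x_t\| = \|S_v(t)x_t\| \le \alpha\|x\|$ on $\mathcal{L}(v)$ to deduce $(1/\phi)' \ge 1/(\alpha^2\|x\|^2)$. Both methods exploit the same underlying fact (on $\mathcal{L}(v)$ the stable and unstable Jacobi fields coincide, so both halves of $\alpha$-stability apply), but the paper's approach is shorter and avoids any ODE analysis, while yours has the mild advantage of not needing the $S_v$-version of Lemma~\ref{sujac} at all.
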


\begin{proof}
Let $\alpha(v) \ge 1$ and  $c_v$ is an $\alpha(v)$-stable geodesic.
Since lemma~\ref{sujac} implies
$$
(U_{v}'(0) - S_{v,t}'(0)) = \left(\int\limits_{0}^t \left(U_{v}^{\ast}U_{v} \right)^{-1}(u) du \right)^{-1}
$$
we have for each unit vector $x \in v^\perp$
\begin{eqnarray*}
\langle (U_{v}'(0) - S_{v,t}'(0))x, x \rangle
&\ge &\frac{1}{ \max \left\{ \int\limits_{0}^t \langle(U_{v}^{\ast}U_{v})^{-1}(u)y_u, y_u \rangle du \mid \| y\| =1  \right\} }\\
&=&\frac{1}{ \left\| \int\limits_{0}^t (U_{v}^{\ast}U_{v})^{-1}(u) du \right\|}
\end{eqnarray*}
Using
$$
 \left\| \int\limits_{0}^t (U_{v}^{\ast}U_{v})^{-1}(u) du \right\| \le \int\limits_{0}^t \|(U_{v}^{\ast}U_{v})^{-1}(u)\| du  \le  \int\limits_{0}^t \| U_{v}^{-1}(u)\|^2 du
 $$
 and
 $$
 \|U_{v}^{-1}(u)\| = \frac{1}{ \min \{ \|U_{v}(u)x_u\| \mid  \| x\| =1 \} } \le \alpha(v),
 $$
 we obtain
$$
 \frac{1}{\alpha^2(v) t} \le \langle (U_{v}'(0) - S_{v,t}'(0))x, x \rangle .
 $$
 On the other hand, lemma~\ref{sujac} implies
$$
(S_{v}'(0) - S_{v,t}'(0)) = \left(\int\limits_{0}^t(S_{v}^{\ast}S_{v})^{-1}(u) du \right)^{-1}
$$
as well.
Using the estimate \eqref{symtensor}, we obtain for all unit vectors $x \in v^\perp$
\begin{equation*}
\begin{split}
\langle (S_{v}'(0) - & S_{v,t}'(0))x, x \rangle \le \left\| \left(\int\limits_{0}^t(S_{v}^{\ast}S_{v})^{-1}(u) du \right)^{-1} \right\| \\
&\le   \left(\int\limits_{0}^t \| (S_{v}^{\ast}S_{v})(u) \| ^{-1}du \right)^{-1}
= \left(\int\limits_{0}^t \|S_{v}(u) \| ^{-2}du \right)^{-1}\\
& \le  \left(\int\limits_{0}^t \frac{1}{\alpha^2(v)}du \right)^{-1}= \frac{\alpha^2(v)}{t}.
\end{split}
\end{equation*}
 Therefore,
 $$
 \langle (S_{v}'(0) - S_{v,t}'(0))x, x \rangle \le \frac{\alpha^2(v)}{t}.
 $$
 Putting both inequalities together, we obtain
 for all $x \in \ker(U_{v}'(0) - S_{v}'(0))$ with $\|x \| =1$:
 $$
  \frac{1}{\alpha^2(v) t} \le \langle (U_{v}'(0) - S_{v,t}'(0))x, x \rangle \le \frac{\alpha^2(v)}{t},
 $$
 which implies the first assertion.
Let $0 < \lambda_1(v,t) \le \ldots \le \lambda_{n-1}(v,t)$ the eigenvalues of $(U_{v}'(0) - S_{v,t}'(0))$ and
$k = \dim( \ker(U_{v}'(0) - S_{v}'(0))$. Then using the above estimate
$$
 \frac{1}{\alpha^2(v) t} \le  \lambda_i(v,t) \le \frac{\alpha^2(v)}{t}
 $$
 for $1 \le i \le k$. The remaining eigenvalues $\lambda_{k+1}(v,t) \le \ldots \le \lambda_{n-1}(v,t)$ of $(U_{v}'(0) - S_{v,t}'(0))$ are monotonically decreasing in $t$ and  converging to the positive eigenvalues
 of $(U_{v}'(0) - S_{v}'(0))$. Hence,
 \begin{equation}\label{4B1}
 \frac{ \beta(v) }{\alpha^{2k}(v) t^k} \le \det (U_{v}'(0) - S_{v,t}'(0)) \le \frac{\alpha^{2k}(v) \beta_t(v) } {t^k},
 \end{equation}
 where $\beta_t(v) = \lambda_{k+1}(v,t) \cdot \ldots \cdot \lambda_{n-1}(v,t)$  and $\beta(v) =  \lambda_{k+1}(v) \cdot \ldots \cdot \lambda_{n-1}(v)$ is the product of the positive eigenvalues.\\
  \end{proof}
\begin{cor}
Let $X$ be a noncompact and nonflat harmonic mani\-fold  having a stable geodesic
 $c_v$. Then there is a constant
$  b \ge 1$ such that
$$
\frac{1}{b} \le \frac{f(t)}{e^{ht} \ t^{\rank(v) -1} } \le b
$$
for all $t \ge1$.
\end{cor}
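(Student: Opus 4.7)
The plan is to combine Corollary~\ref{volgr}, which gives
\[
\frac{f(t)}{e^{ht}} = \frac{1}{\det(U_v'(0) - S_{v,t}'(0))},
\]
with the two-sided asymptotic bound on $\det(U_v'(0) - S_{v,t}'(0))$ supplied by Proposition~\ref{constrank}. Writing $k := \rank(v) - 1 = \dim \cal L(v)$ and letting $\alpha = \alpha(v) \ge 1$ denote a stability constant for $c_v$, the corollary reduces to a routine manipulation once these two ingredients are in hand.

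First, I would dispose of the trivial case $k = 0$. Then $\det(U_v'(0) - S_v'(0)) > 0$, so by Corollary~\ref{volgr} the function $F(t) := f(t)/e^{ht}$ is strictly monotonically increasing and converges to the finite positive limit $1/\det(U_v'(0) - S_v'(0))$. For $t \ge 1$ it is therefore pinched between the positive constants $F(1)$ and $1/\det(U_v'(0) - S_v'(0))$; since $t^{\rank(v) - 1} = 1$, this is exactly the required two-sided bound.

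For $k \ge 1$, I would invoke Proposition~\ref{constrank} to obtain
\[
\frac{\beta(v)}{\alpha^{2k}} \le \det(U_v'(0) - S_{v,t}'(0))\, t^k \le \alpha^{2k} \beta_t(v),
\]
where $\beta_t(v)$ decreases monotonically in $t$ to the product $\beta(v) > 0$ of the positive eigenvalues of $U_v'(0) - S_v'(0)$. Taking reciprocals and substituting the identity from Corollary~\ref{volgr} yields
\[
\frac{1}{\alpha^{2k}\beta_t(v)} \le \frac{f(t)}{e^{ht}\,t^k} \le \frac{\alpha^{2k}}{\beta(v)}.
\]
The right-hand side is a fixed positive constant; for the left-hand side, monotonicity gives $\beta_t(v) \le \beta_1(v)$ whenever $t \ge 1$, so it is uniformly bounded below by $1/(\alpha^{2k}\beta_1(v)) > 0$. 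Taking $b$ equal to the maximum of $\alpha^{2k}/\beta(v)$, $\alpha^{2k}\beta_1(v)$, and $1$ then completes the proof.

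I do not anticipate a real obstacle; the substantive work is already done in Proposition~\ref{constrank}, and this corollary simply repackages its inequalities in terms of $f(t)/(e^{ht}t^{\rank(v)-1})$. The only mildly delicate point is the degenerate case $\rank(v) = n$, in which $\beta(v)$ is an empty product. One then reads Proposition~\ref{constrank} with the convention $\beta(v) = \beta_t(v) = 1$, and the desired polynomial pinching follows directly by applying the first estimate of that proposition to each of the $n-1$ eigenvalues of $U_v'(0) - S_{v,t}'(0)$ separately, all of which in this case lie in $[1/(\alpha^2 t), \alpha^2/t]$.
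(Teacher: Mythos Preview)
Your proposal is correct and follows essentially the same route as the paper, which simply combines the identity $f(t)/e^{ht} = 1/\det(U_v'(0)-S_{v,t}'(0))$ from Corollary~\ref{volgr} with the two-sided determinant estimate of Proposition~\ref{constrank}. Your version is in fact more careful than the paper's one-line proof, since you explicitly treat the case $k=0$ (outside the literal hypotheses of Proposition~\ref{constrank}) and the degenerate case $k=n-1$.
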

\begin{proof}
Using corollary~\ref{volgr}, we have
$$
\frac{e^{ht}}{f(t)} = \det (U_v'(0) - S_{v,t}'(0)).
$$
If $k = \rank(v)-1 $ the estimate follows from proposition \ref{constrank}.
\end{proof}
  \begin{cor}\label{4D}
  Let X be a noncompact simply connected harmonic mani\-fold.
  Then $\rank(v)$  is constant on the set of initial conditions $v\in  SM$
 corresponding to stable geodesics. For a fixed $\gamma  \ge 1$ consider the set
 $$
 G_\gamma := \{ v \in SM \mid c_v  \; \text{is an} \; \alpha(v)-\text{stable geodesic with } \alpha(v) \le \gamma \}.
 $$
 Then, there exists a constant $\rho >0$  such that
  $$
\langle (U_v'(0) - S_v'(0))x, x \rangle \ge \rho \langle x, x \rangle
$$
 for alle $x \in  \ker (U_{v}'(0) - S_{v}'(0))^{\perp} \subset v^\perp $ and $v \in G_\gamma$.
  \end{cor}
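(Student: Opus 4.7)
The plan is to prove the two assertions in sequence, leveraging that by Corollary~\ref{volgr} the function $D(t) := \det(U_v'(0) - S_{v,t}'(0)) = e^{ht}/f(t)$ depends only on $t$, together with the two-sided estimate
$$
\frac{\beta(v)}{\alpha^{2k}(v)} \le D(t)\, t^k \le \alpha^{2k}(v)\, \beta_t(v), \qquad k = \rank(v) - 1,
$$
from Proposition~\ref{constrank}.

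For the constancy of $\rank$ on stable geodesics, I would argue by contradiction. Suppose two stable geodesics $c_{v_1}, c_{v_2}$ have ranks $k_1+1 \neq k_2+1$, say $k_1 < k_2$. The left inequality applied to $v_1$ gives $D(t)\, t^{k_1} \ge \beta(v_1)/\alpha^{2k_1}(v_1) > 0$ for all $t$, while the right inequality applied to $v_2$, combined with the monotone convergence $\beta_t(v_2) \searrow \beta(v_2)$ supplied by Proposition~\ref{constrank}, forces $D(t)\, t^{k_2}$ to be bounded above for large $t$. But $D(t)\, t^{k_2} = (D(t)\, t^{k_1}) \cdot t^{k_2-k_1}$ would then diverge, a contradiction. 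Hence $\rank(v)$ takes the same value for every stable $v$; denote the common value on $G_\gamma$ (assumed non-empty) by $k+1$.

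For the uniform estimate on $G_\gamma$, the right inequality of Proposition~\ref{constrank} combined with $\alpha(v) \le \gamma$ yields $\beta_t(v) \ge D(t)\, t^k / \gamma^{2k}$ for every $v \in G_\gamma$ and every $t$. Passing to the $\liminf$ in $t$ and using $\beta_t(v) \searrow \beta(v)$ gives $\beta(v) \ge \gamma^{-2k} \liminf_{t} D(t)\, t^k$. Fixing any reference $v_0 \in G_\gamma$, the left inequality for $v_0$ shows $D(t)\, t^k \ge \beta(v_0)/\gamma^{2k} > 0$ uniformly in $t$, so $\liminf_{t} D(t)\, t^k \ge \beta(v_0)/\gamma^{2k}$. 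Chaining these yields $\beta(v) \ge \beta(v_0)/\gamma^{4k} =: \rho_0 > 0$ uniformly on $G_\gamma$.

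It remains to upgrade the lower bound on the product $\beta(v)$ of positive eigenvalues of $U_v'(0) - S_v'(0)$ to a lower bound on the smallest positive eigenvalue. By Lemma~\ref{3.1A} together with the boundedness of the sectional curvature of a harmonic manifold (Besse~\cite{Be}, Proposition~6.57), there is a uniform constant $C$ such that the eigenvalues of $U_v'(0) - S_v'(0)$ lie in $[0, C]$. Denoting the positive eigenvalues on $\ker(U_v'(0) - S_v'(0))^{\perp}$ by $\mu_1(v) \le \ldots \le \mu_{n-1-k}(v)$, the estimate $\mu_1(v) \cdots \mu_{n-1-k}(v) = \beta(v) \ge \rho_0$ together with each $\mu_i(v) \le C$ forces $\mu_1(v) \ge \rho_0/C^{n-2-k} =: \rho$, which is the required inequality. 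I expect the subtlest step to be the $\liminf$ passage in the second paragraph: the uniform lower bound on $D(t)\, t^k$ obtained from a single reference $v_0$ must be transferred through the $v$-dependent quantity $\beta_t(v)$ to $\beta(v)$ itself, and this is precisely what the monotone convergence $\beta_t(v) \searrow \beta(v)$ from Proposition~\ref{constrank} allows.
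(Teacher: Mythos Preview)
Your proof is correct and follows essentially the same route as the paper: use the $v$-independence of $D(t)=\det(U_v'(0)-S_{v,t}'(0))$ together with the two-sided bound of Proposition~\ref{constrank} to force the exponent $k$ to be the same for all stable geodesics, then chain the lower bound for a fixed reference $v_0$ against the upper bound for arbitrary $v\in G_\gamma$ to get a uniform lower bound on $\beta(v)$, and finally convert this into a lower bound on the smallest positive eigenvalue via the uniform upper bound on eigenvalues coming from Lemma~\ref{3.1A} and the curvature bound for harmonic manifolds. The only cosmetic differences are that the paper does not bother with a $\liminf$ (it simply uses $\beta(v_0)/\alpha^{2k}(v_0)\le D(t)t^k\le \gamma^{2k}\beta_t(v)$ for all $t$ and lets $t\to\infty$) and keeps $\alpha(v_0)$ rather than replacing it by $\gamma$, yielding the slightly sharper constant $\beta(v_0)/(\alpha^{2k}(v_0)\gamma^{2k})$ in place of your $\beta(v_0)/\gamma^{4k}$.
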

 \begin{proof}
  Since the function $\det (U_{v}'(0) - S_{v,t}'(0))$ is independent of $v \in SX$  proposition \ref{constrank}
 implies that $\rank(v)$ is constant for all $v$ corresponding to stable geodesics.
 Now let $v_0$ be a fixed and $v$ an arbitrary vector in $G_\gamma$. Then
 the estimate \eqref{4B1} implies
\begin{eqnarray*}
 \frac{ \beta(v_0) }{\alpha^{2k}(v_0)} &\le& \det (U_{v_0}'(0) - S_{v_0,t}'(0))t^k  = \det (U_{v}'(0) - S_{v,t}'(0))t^k\\
 &\le& \alpha(v)^{2k}\beta_t(v) \le  \gamma^{2k} \beta_t(v)
 \end{eqnarray*}
 for all $t\ge 0$ and therefore,
 $$
 \frac{ \beta(v_0) }{\alpha^{2k}(v_0)} \le  \gamma^{2k} \beta(v)
 $$
where  $\beta(v) =\prod\limits_{i= k+1}^{n-1} \lambda_i(v)$  is the product of the positive eigenvalues of $(U_{v}'(0) - S_{v}'(0))$.
 Since the curvature of $M$ is bounded the eigenvalues of $(U_v'(0) - S_{v}'(0))$
are bounded by lemma~\ref{3.1A} from above. Since $\beta(v) $ is bounded from below where the bound only depends on  $\gamma$, $\alpha(v_0)$ and $\beta(v_0)$, the same is true for all the positive eigenvalues of $(U_v'(0) - S_{v}'(0))$.
\end{proof}
\begin{theorem}
Let $X$ be a nonflat noncompact harmonic manifold of odd dimension.
Suppose there exists $p \in X$ such that all geodesics $c_v$ with initial conditions $v \in S_pX$ are stable. Then $X$ has rank~1.
\end{theorem}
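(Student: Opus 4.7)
The plan is to argue by contradiction: assume $\rank(X) \ge 2$ and construct a continuous tangent $k$-plane distribution on $S_pX \cong S^{n-1}$ with $1 \le k \le n-2$, which is forbidden for $n$ odd by the Steenrod--Whitehead theorem.

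To pin down the integer $k$, I would first invoke the hypothesis together with Corollary~\ref{4D}. Since every $v \in S_pX$ is the initial vector of a stable geodesic, $\rank(v) = k+1$ is constant on $S_pX$, and the assumption $\rank(X) \ge 2$ forces $k \ge 1$. For the upper bound, nonflatness of $X$ combined with the proposition in Section~2 gives $h > 0$; since $\tr U_v'(0) = h$ and $\tr S_v'(0) = -h$, one has $\tr(U_v'(0) - S_v'(0)) = 2h > 0$, so the nonnegative symmetric endomorphism $U_v'(0) - S_v'(0) \in \End(v^\perp)$ is nonzero, whence $k = \dim \cal L(v) \le n-2$.

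The next step is to promote this pointwise information to a genuine continuous subbundle. The map $v \mapsto U_v'(0) - S_v'(0)$ is continuous, self-adjoint, nonnegative, and of constant corank $k$; by compactness of $S_pX$ its positive eigenvalues are uniformly bounded below by some $\rho > 0$, and this spectral gap implies that the kernels $\cal L(v) = \ker(U_v'(0) - S_v'(0))$ vary continuously in $v$. Under the canonical identification $T_v S_pX = v^\perp$, the distribution $v \mapsto \cal L(v)$ thus becomes a continuous rank-$k$ subbundle of $T S^{n-1}$.

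Finally, for $n$ odd the sphere $S^{n-1}$ is even-dimensional, and the theorem of Steenrod and Whitehead \cite{SW} precludes any continuous tangent $k$-plane field on $S^{n-1}$ with $1 \le k \le n-2$. This contradiction forces $\rank(X) = 1$. The main obstacle is precisely the continuity step: the algebraic constancy of $\dim \cal L(v)$ coming from Corollary~\ref{4D} must be upgraded, via compactness of $S_pX$, to a genuine continuous distribution before the topological obstruction can be invoked.
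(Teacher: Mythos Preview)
Your proposal is correct and follows essentially the same route as the paper: constancy of $\rank(v)$ on $S_pX$ (via Corollary~\ref{4D}) yields a continuous tangent $k$-plane field on the even-dimensional sphere $S_pX$, which the Steenrod--Whitehead theorem forces to be trivial. The only cosmetic difference is that the paper rules out the case $k=n-1$ via the determinant identity $e^{ht}=\det U_v(t)=\det S_v(t)=e^{-ht}$, whereas you use the trace identity $\tr(U_v'(0)-S_v'(0))=2h>0$; both are equally valid.
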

\begin{proof}
Since the $\rank(v)$ is constant for each $v \in S_pX$ consider the subspace  of $v^\perp \cong T_vS_pX$, given by
$
\cal L(v) = \ker (U_{v}'(0) - S_{v}'(0)).
$
Since the rank is constant it defines a continuous distribution. If the dimension of $X$ is odd, the dimension of the sphere $S_pX$ is even. By a result of N.E.~Steenrod and J.H.C.~Whitehead \cite{SW}
 the distribution must be trivial, i.e., $\dim \cal L(v)$ is zero or has dimension $n-1$. In
the latter case $U_{v}'(0) =S_{v}'(0)$ and, therefore, $e^{ht} =\det U_v(t) = \det S_v(t)  = e^{-ht}$. But then
$h =0$ and $X$ is flat. Hence, $\dim \cal L(v)$ is zero and $\rank( X) =1$.
\end{proof}

\section{Harmonic manifolds without focal points}

In this section, we will show that a nonflat simply connected harmonic manifold $X$  without focal points has rank~1.
For odd dimensions the prove has been given in the previous section.
Note that $X$ has no focal points if the second fundamental form of horospheres is positive semi-definit
or if for all $v \in SX $ the Busemann functions $b_v$ are convex.
In terms of the stable and unstable Jacobi tensor $S_v$ and $U_v$ this is equivalent to
$U_v'(0) \ge 0$ and therefore $S_v'(0) = -  U_v'(0)\le 0$ for all $v \in SX$.
Most of the properties of manifolds of nonpositive
curvature are shared by manifolds without focal points. For instance geodesic spheres are convex
since  horospheres are convex. Furthermore, the flat strip theorem is true, which asserts that two geodesics $c_1$ and $c_2$ bound a flat strip
if $d(c_1(t) , c_2(t)) \le b$ for some $b \ge 0$ and all $t \in \mathbb{R}$. This means that there exists an isometric, totally geodesic imbedding $F: [0,a] \times \mathbb{R} \to X$ such that $c_1(t) = F(0,t)$ and $c_2(t) = F(a,t)$ (see \cite{Es} for a proof).
In particular, $c_1 $ and $c_2$ are parallel, i.e., $d(c_1(t), c_2(t))=a$.

\begin{lemma} \label{5A}
Let $X$ be a manifold without focal points.
Then the following holds
\begin{eqnarray*}
\cal L(v) &=& \ker (U_{v}'(0) )\cap   \ker (S_{v}'(0))   \subset v^\perp \\
 &=& \{x \in v^\perp \mid x_t \; \text{is a parallel Jacobi field along } \; c_v\}.
\end{eqnarray*}
Furthermore $x_t$,$ S_v(t) x_t $, $U_v(t) x_t \in  \cal L^\perp(\phi^tv) $ for all
 $x \in \cal L^\perp(v) =\{x \in v^\perp \mid x \perp \cal L(v) \}$.
 \end{lemma}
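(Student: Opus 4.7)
My plan is to treat the three assertions in sequence, drawing on the no-focal-points hypothesis through $U_v'(0)\ge 0$ and $-S_v'(0)\ge 0$ (both symmetric, as derivatives at $0$ of the Lagrange tensors $U_v,S_v$) together with the monotonicity: $\|U_v(t)y\|$ non-decreasing and $\|S_v(t)y\|$ non-increasing. For the first equality, the decomposition
\[
\langle(U_v'(0)-S_v'(0))x,x\rangle=\langle U_v'(0)x,x\rangle+\langle -S_v'(0)x,x\rangle
\]
expresses the left-hand side as a sum of two nonnegative terms, vanishing iff each does. Since $\langle Ax,x\rangle=0\iff Ax=0$ for a symmetric positive semi-definite $A$, this identifies $\cal L(v)=\ker(U_v'(0)-S_v'(0))$ with $\ker U_v'(0)\cap\ker S_v'(0)$.

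For the characterization of $\cal L(v)$ as initial data of parallel Jacobi fields I argue both directions. Assume first $x\in\ker U_v'(0)\cap\ker S_v'(0)$. Then the Jacobi fields $J(t):=U_v(t)x_t$ and $\widetilde J(t):=S_v(t)x_t$ share initial data $(x,0)$, hence coincide by uniqueness; their common norm is simultaneously non-decreasing and non-increasing, so $\|J(t)\|\equiv\|x\|$. Differentiating yields $\langle J'(t),J(t)\rangle=0$, and writing $J'(t)=\bigl(U_v'(t)U_v(t)^{-1}\bigr)J(t)$ with the horosphere shape operator $U_v'(t)U_v(t)^{-1}$ symmetric (Lagrange property) and positive semi-definite (no focal points), this forces $J'(t)\equiv 0$, so $J(t)=x_t$ is parallel. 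Conversely, if $x_t$ is a parallel Jacobi field, then $\|x_t\|\equiv\|x\|$ is bounded on both $[0,\infty)$ and $(-\infty,0]$. Invoking the standard characterization of stable and unstable Jacobi fields in the no-focal-points setting (any bounded Jacobi field on $[0,\infty)$ with initial value $x$ coincides with $S_v(\cdot)x_\cdot$, since any other such Jacobi field differs by $A_v(t)w$ which is unbounded unless $w=0$; analogously on $(-\infty,0]$ with $U_v(\cdot)x_\cdot$) gives $x_t=S_v(t)x_t=U_v(t)x_t$, and differentiating at $t=0$ yields $S_v'(0)x=U_v'(0)x=0$.

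For the final assertion I first note that $\cal L(\phi^tv)$ is the parallel translate of $\cal L(v)$ along $c_v$: a parallel Jacobi field along $c_v$ is, after reparametrization, a parallel Jacobi field along $c_{\phi^tv}$, so $y\in\cal L(v)$ iff $y_t\in\cal L(\phi^tv)$. Since parallel transport is an isometry, this immediately gives $x_t\in\cal L^\perp(\phi^tv)$ whenever $x\in\cal L^\perp(v)$. For $U_v(t)x_t$, pick $z\in\cal L(v)$; because $z_\cdot$ is parallel Jacobi, the Wronskian of the Jacobi fields $U_v(\cdot)x_\cdot$ and $z_\cdot$ reduces to $\langle U_v'(t)x_t,z_t\rangle$, which is constant in $t$ and at $t=0$ equals $\langle U_v'(0)x,z\rangle=\langle x,U_v'(0)z\rangle=0$ by symmetry of $U_v'(0)$ and $z\in\ker U_v'(0)$. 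Since the same expression equals $\tfrac{d}{dt}\langle U_v(t)x_t,z_t\rangle$, we conclude $\langle U_v(t)x_t,z_t\rangle\equiv\langle x,z\rangle=0$, proving $U_v(t)x_t\perp z_t$. The identical argument with $S_v$ in place of $U_v$ handles $S_v(t)x_t$.

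The main obstacle I anticipate is the forward direction of the parallel-Jacobi characterization, upgrading constant norm of $J$ to $J'\equiv 0$. The essential input is that the horosphere shape operator $U_v'(t)U_v(t)^{-1}$ is not merely nonnegative but also symmetric, so that vanishing of its quadratic form on $J(t)$ forces $U_v'(t)U_v(t)^{-1}J(t)=0$.
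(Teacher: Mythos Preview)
Your proof is correct and largely parallels the paper's own argument. For the first equality and for the invariance of $\cal L^\perp$ under $x\mapsto x_t$, $U_v(t)$, $S_v(t)$, your reasoning is essentially identical to the paper's (the paper phrases the last part as a direct computation of $\langle U_v(t)x_t, y_t\rangle'$ using $U_{\phi^tv}'(0)y_t=0$ rather than via the Wronskian, but this is the same calculation). The forward direction of the parallel-Jacobi characterization is also close: the paper invokes Lemma~\ref{central} to get $U_v(t)x_t \in \cal L(\phi^t v)$ directly and concludes $(U_v(t)x_t)' = U_{\phi^t v}'(0)U_v(t)x_t = 0$, while you reach the same endpoint via the constant-norm argument and positive semidefiniteness of the shape operator.

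The one place your route genuinely differs is the \emph{reverse} direction of the parallel-Jacobi characterization. You invoke the boundedness characterization of stable Jacobi fields, reducing to the claim that $A_v(t)w_t$ is unbounded on $[0,\infty)$ for every $w \ne 0$. This is true in the no-focal-points setting, but it is a nontrivial fact that your parenthetical does not actually establish: monotonicity of $\|A_v(t)w_t\|$ alone does not preclude a finite limit. The paper's argument here is more elementary and entirely self-contained. Once $x_t$ is a parallel Jacobi field one has $R(t)x_t = 0$, so $J_s(t) := \tfrac{s-t}{s}\,x_t$ is itself a Jacobi field, with $J_s(0)=x$ and $J_s(s)=0$; by uniqueness (only absence of conjugate points is needed) $J_s(t) = S_{v,s}(t)x_t$, and letting $s \to \pm\infty$ gives $S_v(t)x_t = U_v(t)x_t = x_t$, hence $S_v'(0)x=U_v'(0)x=0$. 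This sidesteps any appeal to growth properties of $A_v$.
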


\begin{proof}
 Since $X$ has no focal points, we have
$$\langle (S_{v}'(0)x, x \rangle \le 0 \le \langle (U_{v}'(0)x, x \rangle
 $$
 for all $x \in v^\perp$. Since $S_{v}'(0)$ and  $U_{v}'(0)$ are symmetric endomorphisms
$x \in  \ker (U_{v}'(0) - S_{v}'(0))$ implies $x \in \ker (U_{v}'(0) \cap   \ker (S_{v}'(0)) $.\\
 Assume that $x \in \cal L(v)$. Then lemma~\ref{central} implies $U_v(t) x_t \in \cal L(\phi^tv)$ and we obtain
$$
(U_v(t) x_t)' = U_v'(t) x_t = U_v'(t) U_v^{-1}(t) U_v(t)x_t =   U_{\phi^tv}'(0) U_v(t)x_t = 0.
$$
Therefore, $U_v(t) x_t$ is parallel along $c_v$. In particular, $U_v(0) = \id $ yields $U_v(t) x_t = x_t$.\\
Now consider $x \in v^{\perp}$ such that its parallel translation defines a Jacobi field. Then for each $s \not=0$
$J_s(t) = \frac{s-t}{s}x_t$ defines a Jacobi field with $J_s(0) =x$ and $J_s(s) =0$. Therefore,
$$
U_{v}x_t = \lim\limits_{s \to \infty}J_s(t) = x_t =  \lim\limits_{s \to -\infty} J_s(t)= S_{v}x_t.
$$
Assume that
$x \in  \cal L^\perp(v)$. Then for all $y \in \cal L(v)$, we have $\langle y_t, x_t \rangle = \langle x,y \rangle$ and $y_t \in \cal L(\phi^tv)$ implies $ x_t \in \cal L^\perp(\phi^t(v) )$.\\

To prove the last assertion consider $x \in \cal L^\perp(v)$. Then for all $y \in \cal L(v)$, we obtain
\begin{eqnarray*}
\langle U_v(t) x_t , y_t \rangle '&= &\langle U_v'(t) x_t , y_t \rangle =
\langle U_v'(t)  U_v^{-1} (t) U_v(t) x_t , y_t \rangle\\
& =& \langle U_{\phi^tv}'(0)  U_v(t) x_t , y_t \rangle = \langle  U_v(t)x_t,  U_{\phi^tv}'(0)y_t \rangle =0
\end{eqnarray*}
Hence, $\langle U_v(t) x_t , y_t \rangle = \langle x, y \rangle = 0$ and, therefore,
$U_v(t) x_t \in \cal L^\perp(\phi^t(v))$. In the same way one proves: $S_v(t) x_t \in \cal L^\perp(\phi^t(v))$.
\end{proof}
\begin{lemma} \label{5A1}
Let $X$ be a simply connected manifold without focal points such that $\dim \cal L(v)$ is positive and independent of $v$.
Then, for each compact set $K \subset SX$ there exists $T >0$ such that
 $$
 \cal L(v) = \{ x \in v^\perp \mid R(x_t, \dot{c}_v(t), \dot{c}_v(t)) =0 , t \in [-T, T]\}.
  $$
 for all $v \in K$. In particular
 $\cal L(v)$ and $\cal L^\perp(v)$ depend smoothly on $v \in SX$.
 \end{lemma}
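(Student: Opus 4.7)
The plan is to characterise $\cal L(v)$ as the kernel of a single smooth symmetric endomorphism of $v^\perp$ that stabilises uniformly on compact sets. For each $v \in SX$ and $T > 0$ introduce
$$E_T(v) := \{ x \in v^\perp \mid R(x_t, \dot c_v(t))\dot c_v(t) = 0 \text{ for all } t \in [-T,T]\},$$
and $E_\infty(v) := \bigcap_{T > 0} E_T(v)$. The inclusion $\cal L(v) \subseteq E_T(v)$ for every $T$ is immediate from Lemma~\ref{5A}, since a parallel Jacobi field $x_t$ satisfies $R(x_t,\dot c)\dot c = -x_t'' = 0$. Conversely, if $x \in E_\infty(v)$, then $x_t$ is parallel and solves the Jacobi equation, so $x \in \cal L(v)$ again by Lemma~\ref{5A}. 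Hence $E_\infty(v) = \cal L(v)$, of constant dimension $k := \dim \cal L(v)$ by assumption.

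The main step is to replace $E_\infty$ by $E_T$ for one finite $T$, uniformly in $v \in K$. Writing $P_t^v : v^\perp \to \dot c_v(t)^\perp$ for parallel transport along $c_v$, the operators
$$\tilde R_t(v) := (P_t^v)^{-1} \circ R(\,\cdot\,,\dot c_v(t))\dot c_v(t) \circ P_t^v$$
are symmetric endomorphisms of $v^\perp$ depending smoothly on $(v,t)$, and the positive semi-definite smooth operator
$$Q_T(v) := \int_{-T}^T \tilde R_t(v)^2 \, dt$$
satisfies $\ker Q_T(v) = E_T(v)$, since $\langle Q_T(v)x,x\rangle = \int_{-T}^{T} \|\tilde R_t(v)x\|^2 \, dt$. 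As $v \mapsto \dim \ker Q_T(v)$ is upper semi-continuous and bounded below by $k$, each set $U_T := \{v \in SX : \dim E_T(v) = k\}$ is open; and for each fixed $v$ the decreasing sequence $\dim E_T(v)$ reaches $k$ at some finite $T$, so every $v$ lies in some $U_T$. The $U_T$ are nested ($U_T \subseteq U_{T'}$ for $T \le T'$, because $\cal L(v) \subseteq E_{T'}(v) \subseteq E_T(v)$), so compactness of $K$ yields a single $T_K$ with $E_{T_K}(v) = \cal L(v)$ throughout $K$, which is the first assertion.

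The smoothness statement then drops out: $Q_{T_K}$ is a smooth family of symmetric endomorphisms on the smooth rank $(n-1)$ bundle $v \mapsto v^\perp$ over $SX$, and its kernel has locally constant dimension $k$ on the open set $U_{T_K}$, so $\cal L = \ker Q_{T_K}$ and $\cal L^\perp$ are smooth subbundles there. Applying the argument to $K = \{v_0\}$ for any $v_0 \in SX$ then yields smoothness globally. The one genuine obstacle is the uniform stabilisation on $K$: pointwise stabilisation of the decreasing chain $E_T(v)$ is elementary, but upgrading to a common $T_K$ relies essentially on the hypothesis that $\dim \cal L(v)$ is constant, since without this constancy the sets $U_T$ would fail to be open and the upper-semi-continuity-plus-compactness argument would not close.
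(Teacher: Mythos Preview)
Your proof is correct and follows the same overall strategy as the paper: both show that the finite-time space $E_T(v)$ (the paper writes $\cal L_T(v)$) coincides with $\cal L(v)$ for $T$ large enough, uniformly on compact sets, using the constant-dimension hypothesis together with compactness. The paper carries this out by a direct sequential contradiction argument (if no uniform $T$ existed, a limit of $(k+1)$-frames would produce too many parallel Jacobi fields at the limit vector), whereas you package the same idea via the positive semi-definite operator $Q_T(v)=\int_{-T}^T \tilde R_t(v)^2\,dt$, upper semi-continuity of $\dim\ker Q_T$, and a nested open-cover argument. Your formulation has the advantage that the smoothness of $\cal L$ and $\cal L^\perp$ is then immediate, since they are exhibited as the kernel and image of a smooth symmetric endomorphism of constant rank; the paper asserts this smoothness but does not make the mechanism explicit.
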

\begin{proof}
According to Lemma \ref {5A} we have that $x \in \cal L(v)$ if and only if its parallel translation $x_t$ is a  Jacobi field
along $c_v$. Therefore, each $x \in \cal L(v)$ is contained in
$$
\cal L_T(v) :=\{ x \in v^\perp \mid R(x_t, \dot{c}_v(t), \dot{c}_v(t)) = 0, t \in [-T, T]\}
$$
 for all $T >0$.
Let $K \subset SX$ be compact. If $T$ with the required property would not exists, we could choose
a convergent sequence $v_n \in K$ and a sequence $T_n$ with $T_n \to \infty$ such
that for $v = \displaystyle\lim\limits_{n \to \infty} v_n =v$
$$
\dim \cal L_{T_n}(v_n) > \dim \cal L(v_n) = \dim \cal L(v)
$$
which would imply
$$
\dim  \{ x \in v^\perp \mid R(x_t, \dot{c}_v(t), \dot{c}_v(t)) =0, t \in \mathbb{R} \} > \dim \cal L(v).
$$
On the other hand each $x \in v^\perp$ with
$R(x_t,  \dot{c}_v(t), \dot{c}_v(t)) = 0$ for all $t \in \mathbb{R}$ defines a parallel Jacobi field
along $c_v$ and hence is contained in $\cal L(v)$ which leads to a contradiction.
Hence, locally  $\cal L(v) = \cal L_T(v)$ for sufficiently large $T$. In particular, $\cal L_T(v)$ and   $\cal L^\perp(v)$ depend smoothly on $v \in SX$.
\end{proof}

\begin{prop}\label{5B}
Let $X$ be a nonflat manifold without focal points
and bounded sectional curvature. Assume that the eigenvalues of
$(U_{v}'(0) - S_{v}'(0))$ restricted to $\cal L^\perp (v)\subset v^\perp$ are bounded from below by a positive constant independent of $v$. Then
there are constants $a \ge 1$ and $\alpha >0$ independent of $v$ such that
$$
\| S_v(t)x_t \| \le ae^{-\alpha t} \|x \| \; \; \text{and} \; \; \| S_v(-t)x_{-t} \| \ge \frac{1}{a} e^{\alpha t} \|x \|
$$
as well as
$$
\| U_v(t)x_t \| \ge \frac{1}{a} e^{\alpha t} \|x \| \; \; \text{and} \; \;\| U_v(-t)x_{-t} \| \ge ae^{-\alpha t} \|x \|
$$
for all $x \in \cal L^\perp(v) $ and $t\ge 0$.
Furthermore, for $t \ge 0$
$$
\| A_v(t)x_t \| = t\|x\|  \quad \text{for all } \; x \in \cal L (v)
$$
and  there exists a constant $a'$ such that
$$
\| A_v(t)x_t \| \ge a' e^{\alpha t}\|x\| \quad \text{for all} \;x \in \cal L^\perp(v)
$$
for all $t \ge 1$.
\end{prop}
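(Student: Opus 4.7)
The strategy is to use the invariant orthogonal splitting $v^\perp = \cal L(v)\oplus\cal L^\perp(v)$ (with both factors varying smoothly in $v$ by Lemma~\ref{5A1}) and reduce the hyperbolic estimates on $\cal L^\perp$ to a localised version of Bolton's Anosov criterion (Theorem~\ref{Bolt}). The first step is to dispose of the $\cal L$-factor. By Lemma~\ref{5A}, for $x \in \cal L(v)$ the parallel extension $x_t$ is already a Jacobi field, so $U_v(t)x_t = S_v(t)x_t = x_t$, while for $x \in \cal L^\perp(v)$ both $U_v(t)x_t$ and $S_v(t)x_t$ lie in $\cal L^\perp(\phi^t v)$. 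Hence $U_v(t)$ and $S_v(t)$ are block-diagonal with respect to $\cal L(v)\oplus\cal L^\perp(v)$ and act as the identity on the $\cal L$-factor. Moreover, Lemma~\ref{5A1} gives $R(x_t,\dot c_v,\dot c_v)=0$ for $x \in \cal L(v)$, so the Jacobi equation collapses to $J''=0$ on the parallel translate of $\cal L(v)$, forcing $A_v(t)x_t = tx_t$ and thus $\|A_v(t)x_t\|=t\|x\|$ for $x \in \cal L(v)$.

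The heart of the proof is on $\cal L^\perp$. Writing $U_v^\perp$, $S_v^\perp$ for the restrictions, Lemma~\ref{sujac} together with the spectral-gap hypothesis yields
\[
\int_0^\infty\bigl((U_v^\perp)^*\,U_v^\perp\bigr)^{-1}(u)\,du \;=\; \bigl(U_v'(0)-S_v'(0)\bigr)^{-1}\bigr|_{\cal L^\perp(v)} \;\le\; \rho^{-1}\,\id,
\]
and Lemma~\ref{3.1A} gives $\|U_v'(0)\|,\|S_v'(0)\|\le\beta$. These are precisely the inputs Bolton uses to prove Theorem~\ref{Bolt}, with the no-focal-points property additionally providing monotonicity of $\|U_v(t)x_t\|$ and $\|S_v(t)x_t\|$. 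Running his Riccati/ODE argument inside the invariant subbundle $\cal L^\perp$ produces uniform constants $a\ge 1$, $\alpha>0$ with
\[
\|U_v(t)x_t\| \ge a^{-1}e^{\alpha t}\|x\|, \qquad \|S_v(t)x_t\| \le a\,e^{-\alpha t}\|x\| \qquad (t\ge 0,\ x \in \cal L^\perp(v)).
\]
The backward estimates follow by time reversal: $v\mapsto -v$ swaps $U$ and $S$, preserves $\cal L^\perp$ and the gap hypothesis, so the forward growth of $U_{-v}^\perp$ is the backward growth of $S_v^\perp$, and symmetrically. For $A_v$, Proposition~\ref{jac1} gives $A_v(t)=U_v(t)\int_0^t(U_v^*U_v)^{-1}(u)\,du$; the exponential decay of $\bigl((U_v^\perp)^*U_v^\perp\bigr)^{-1}$ just established makes the tail $\int_t^\infty$ uniformly small, so $\int_0^t$ is bounded below by $c_0\,\id$ on $\cal L^\perp$ once $t\ge 1$, and combining with the lower bound on $\|U_v(t)x_t\|$ delivers $\|A_v(t)x_t\|\ge a'e^{\alpha t}\|x\|$.

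The main obstacle is to carry Bolton's exponential-growth argument for $U_v$ over from the full normal bundle to the invariant subbundle $\cal L^\perp$, using only the spectral gap restricted to $\cal L^\perp$. Three features make this possible: the block-diagonal structure from Lemma~\ref{5A} decouples the Riccati equation for $U_v'(t)U_v(t)^{-1}$ along $\cal L\oplus\cal L^\perp$, so restriction to $\cal L^\perp$ is preserved by the dynamics; the smoothness from Lemma~\ref{5A1} keeps all constants uniform in $v$; and the no-focal-points hypothesis together with the flat strip theorem rules out \emph{neutral} Jacobi fields inside $\cal L^\perp$: if $\|S_v(t)x_t\|$ with $x \in \cal L^\perp(v)$ failed to tend to zero, its monotone decrease would produce a bounded, non-zero Jacobi field along $c_v$, and the flat strip theorem would force it to be parallel, contradicting $x \perp \cal L(v)$. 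Once this decay is in place, the uniform exponential rate is forced by the combination of the upper curvature bound (limiting how fast $\|S_v(t)x_t\|$ can drop) and the integral bound above (forbidding subexponential decay).
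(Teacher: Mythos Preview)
Your outline is correct and follows the same route as the paper: reduce to the invariant splitting via Lemma~\ref{5A}, then run a Bolton-type ODE argument on $\cal L^\perp$ to extract the exponential rate. The paper does not invoke Bolton as a black box, however; it carries the argument out explicitly, and the key identity it uses is not the one you quote from Lemma~\ref{sujac} but rather equation~\eqref{c4} of Lemma~\ref{central},
\[
U_{\phi^t v}'(0) - S_{\phi^t v}'(0) \;=\; S_v^{\ast-1}(t)\Bigl(\int_{-\infty}^{t}(S_v^\ast S_v)^{-1}(u)\,du\Bigr)^{-1}S_v^{-1}(t),
\]
from which the spectral gap $\rho$ on $\cal L^\perp(\phi^t v)$ gives directly $\rho\,F(t)\le F'(t)$ for $F(t)=\int_0^t \varphi(u)\,du$ with $\varphi(u)=\min\{\|S_v^{-1}(u)y\|^2 : y\in\cal L^\perp(\phi^u v),\,\|y\|=1\}$. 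This yields the exponential bound in one stroke, without any preliminary ``decay to zero'' step. Your final paragraph's appeal to the flat strip theorem to rule out neutral Jacobi fields in $\cal L^\perp$ is therefore unnecessary (and would require more care: a stable Jacobi field bounded for $t\ge 0$ need not be bounded for $t\to -\infty$, so the flat strip theorem does not apply directly). For the $A_v$ estimate on $\cal L^\perp$ the paper likewise takes a shorter path than yours: rather than bounding the tail of $\int_0^\infty$, it uses Lemma~\ref{sujac} to identify $\int_0^t(U_v^\ast U_v)^{-1}(s)x_s\,ds$ with $\bigl((U_v'(0)-S_{v,t}'(0))^{-1}x\bigr)_t$ and bounds $U_v'(0)-S_{v,t}'(0)$ from above for $t\ge 1$.
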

\begin{proof}
Consider the equation
$$
S_v^{\ast}(t) \left(U_{\phi^t(v)}'(0) - S _{\phi^t(v)}'(0) \right) S_v^{\ast}(t)=  \left(\int\limits_{-\infty}^{t} (S_v^{\ast}S_v)^{-1}(u) du \right)^{-1}
$$
proved in lemma~\ref{central} of the appendix. Using Lemma \ref{5A} the left and therefore
also the right hand side defines a strictly positive symmetric endomorphism on $\cal L^\perp(\phi^t v)$.
Since the eigenvalues of $(U_{v}'(0) - S_{v}'(0))$ restricted to $ \cal L^\perp (v)\subset v^\perp$
are bounded from below by a positive constant $\rho$, lemma~\ref{central} of the appendix implies
\begin{eqnarray*}
 \rho &\le& \left \langle \left( \int\limits_{-\infty}^{t} (S_v^{\ast}S_v)^{-1}(u) du
 \right)^{-1}_{\Big |\cal L^\perp(\phi^t v)}  S_v^{-1}(t)x, S_v^{-1}(t)x \right \rangle\\
&\le& \left\| \left(\int\limits_{-\infty}^{t} (S_v^{\ast}S_v)^{-1}(u) du \right)^{-1}_{\Big |\cal L^\perp(\phi^t v)}  \right\| \left\| S_v^{-1}(t)x \right\|^2
\end{eqnarray*}
for all $x \in \cal L^\perp(\phi^t v)$ with $\|x\| =1$.
Furthermore,  we have
\begin{equation*}
\begin{split}
&\left\| \left(\int\limits_{-\infty}^{t} (S_{v}^{\ast}S_{v})^{-1}(u) du \right)^{-1}_{\Big|\cal L^\perp(\phi^t v)} \right\| =\\
& \quad \quad \frac{1} { \min \left
\{\int\limits_{-\infty}^{t} \langle (S_{v}^{\ast}S_{v})^{-1}(u)y_u, y_u \rangle du \mid y \in \cal L^\perp(v), \|y\| =1 \right\} }.
\end{split}
\end{equation*}
Therefore,
\begin{equation*}
\begin{split}
\rho \min &\left\{\int\limits_{-\infty}^{t} \langle S_v^{\ast -1}(u)y_u, S_v^{\ast -1}(u)y_u \rangle du \mid y \in \cal L^\perp(v), \|y\| =1) \right\}\\
& \quad \quad \le \|S_v^{-1}(t)x \|^2.
\end{split}
\end{equation*}
Defining
\begin{eqnarray*}
\varphi(u) &:= &\min \left\{\|S_v^{\ast -1}(u)y \|^2 \mid y \in \cal L^\perp(\phi^u v), \|y\| = 1 \right\}\\
&=& \min \left\{\|S_v^{ -1}(u)y \|^2 \mid y \in \cal L^\perp(\phi^u v), \|y\| = 1 \right\},
\end{eqnarray*}
 we obtain
$$
\rho \int\limits_{0}^{t} \varphi(u)du \le \rho \int\limits_{-\infty}^{t} \varphi(u)du \le \varphi(t)
$$
and, hence,
$$
\rho F(t) \le F'(t)
$$
for $F(t) := \int\limits_{0}^{t} \varphi(u)du $. This implies $F(t) \ge F(1) e^{\rho t}$ for all $t \ge 1$ and, therefore,
$\varphi(t)= F'(t) \ge \rho F(t)\ge \rho F(1) e^{\rho t}$ for all $t \ge 1$.
 Since the sectional curvature of $X$ is bounded, $\varphi(t)$ is on $[0,1]$ bounded away from $0$.
 Hence, there exists some constant $a \ge 1$ such that
\begin{equation} \label{eq5a}
\|S_v^{-1}(t)y \| \ge \frac{1}{a} e^{\alpha t}\|y\|
\end{equation}
for $ \alpha = \frac{\rho}{2}$ and all $y \in \cal L^\perp (\phi^tv) $ and $t \ge 0$. Since  $S_v(t) :  \cal L^\perp(\phi^tv) \to  \cal L^\perp(\phi^tv)$
 is an isomorphism, we obtain for all $x \in \cal L^\perp(\phi^tv)$ and $t \ge 0$
$$
\|S_v(t)x \| \le a e^{-\alpha t} \|x \|.
$$
Note, that for each $u \in \mathbb{R}$, we have
$$
S_{\phi^u v}(t) x_t = S_v(t+u)(S_v^{-1}(u)x)_t,
$$
where $x_t$ is the parallel translation of $x \in \cal L^\perp(\phi^u(v))$ along $c_{\phi^u v}(t)$. Hence,
for $t = -u \le 0$ we obtain with \eqref{eq5a}
$$
\| S_{\phi^u v}(-u) x_{-u} \| = \| (S_v^{-1}(u)x)_{-u} \| = \| (S_v^{-1}(u)x) \| \ge
\frac{1}{a} e^{\alpha u} \|x \|.
$$
In particular, for $w = \phi^u v$ and $u \ge 0$
the estimate
$$
\| S_w(-u) x_{-u} \| \ge \frac{1}{a} e^{\alpha u} \|x \|
$$
holds for all $x \in \cal L^\perp (w)$. Since
$ U_v(t) = S_{-v}(-t)$ the second estimate of the proposition follows.\\
To prove the remaining assertions we recall that
$$
A_v(t) x_t = U_v(t) \int\limits_{0}^{t} \left(U_v^{\ast}U_v \right)^{-1}(s)x_s ds.
$$
If $x \in   \cal L (v)$ we have $\left(U_v^{\ast}U_v \right)^{-1}(s)x_s =x_s$ and, therefore,
$A_v(t) x_t = U_v(t) (tx_t) = t x_t$. \\
If $x \in   \cal L^\perp (v)$ we have $\int\limits_{0}^{t} \left(U_v^{\ast}U_v \right)^{-1}(s)x_s ds \in   \cal L^\perp (\phi^tv)$. Therefore,
$$
\| A_v(t) x_t \| \ge a e^{\alpha t} \left\| \int\limits_{0}^{t} \left(U_v^{\ast}U_v \right)^{-1}(s)x_s ds  \right\|.
$$
Since lemma~\ref{sujac} implies
$$
((U_{v}'(0) - S_{v,t}'(0))^{-1}x)_t = \int\limits_{0}^t(U_{v}^{\ast}U_{v})^{-1}(s)x_s ds
$$
and since for $t \ge 1$ there exists a constant $b >0$ such that
$\langle (U_{v}'(0) - S_{v,t}'(0))x ,x \rangle \le b \langle x, x \rangle$
for all $x \in v^\perp$ the last estimate follows.
\end{proof}

\begin{theorem}\label{5C}
Let X be a nonflat
manifold without focal points and bounded sectional curvature. Assume that the rank of X is constant, i.e.,
the dimension of
$$
\dim \cal L(v) = \dim \{ x \in v^\perp \mid v \in \ker (U_v'(0))\cap   \ker (S_v'(0)) = \rank(v) -1
$$
is independent of
$v$. Assume furthermore, that there exists a constant $\rho >0$ such that
$$
\langle (U_{v}'(0) - S_{v}'(0))x , x \rangle \ge \rho \langle x, x \rangle
$$
for all $x \in \cal L^\perp(v)$. Then the rank is equal to one and the geodesic flow is Anosov.
\end{theorem}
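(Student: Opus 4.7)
The plan is to first extract partial hyperbolicity of the geodesic flow from Proposition~\ref{5B}, then argue geometrically that the would-be center direction coming from $\cal L$ is trivial, and finally invoke Theorem~\ref{Bolt} to conclude both $\rank(X)=1$ and the Anosov property.

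For the first step, Lemma~\ref{5A1} shows that under constant rank the distributions $\cal L$ and $\cal L^\perp$ are smooth and $\phi^t$-invariant. Using the connection-map identification $T_vSX \cong T_{\pi v}X \oplus T_{\pi v}X$, one defines
\begin{equation*}
E^s(v) := \{(\xi, S_v'(0)\xi) : \xi \in \cal L^\perp(v)\}, \quad E^u(v) := \{(\xi, U_v'(0)\xi) : \xi \in \cal L^\perp(v)\},
\end{equation*}
and lets $E^c(v)$ be the complementary $\phi^t$-invariant distribution containing the flow direction together with the horizontal and vertical lifts of $\cal L(v)$. The exponential estimates of Proposition~\ref{5B} on $S_v,U_v$ restricted to $\cal L^\perp$, together with the bounds on $\|S_v'(0)\|$ and $\|U_v'(0)\|$ supplied by Lemma~\ref{3.1A}, translate in the Sasaki metric into exponential contraction on $E^s$ and expansion on $E^u$. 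On $E^c$ the flow is only polynomially bounded, since $A_v$ acts on parallel translates of $\cal L(v)$ as $t\cdot\id$ by Proposition~\ref{5B}. This exhibits $\phi^t$ as partially hyperbolic.

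To upgrade to Anosov I would show $\cal L(v)=0$ for every $v$. Suppose instead $\dim\cal L(v)=k\ge 1$. By Lemma~\ref{5A} each nonzero $x\in\cal L(v)$ initiates a parallel Jacobi field along $c_v$, so the flat strip theorem (valid in the absence of focal points, as recalled at the start of Section~5) produces a $(k+1)$-dimensional flat, totally geodesic submanifold $F_v\subset X$ through $c_v$. Constant rank together with Lemma~\ref{5A1} makes the family $\{F_v\}$ smoothly varying and $\phi^t$-invariant, so $X$ carries a foliation by flat leaves tangent to the geodesic direction. The aim is then to derive a contradiction with the nonflatness of $X$: using $\phi^t$-invariance of this flat foliation together with the uniform transversality $\rho>0$ on $\cal L^\perp$, one argues that the curvature tensor must in fact vanish on every tangent plane, contrary to the hypothesis. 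Once $\cal L(v)=0$, the hypothesis reads $\langle(U_v'(0)-S_v'(0))x,x\rangle\ge\rho\|x\|^2$ for all $x\in v^\perp$, and Theorem~\ref{Bolt} immediately yields the Anosov property; $\rank(X)=1$ is then built into the definition.

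The main obstacle is precisely the geometric argument ruling out a nontrivial $\cal L$. The linear algebra of Proposition~\ref{5B} controls only the $\cal L^\perp$-directions, so eliminating the flat center requires the essentially geometric technique of the ``partially hyperbolic implies Anosov'' type announced for Section~6 of the paper. The natural strategy is to combine the flat strip theorem with a de~Rham-type decomposition: a nontrivial $\cal L$ would force $X$ to split locally as a Riemannian product with a Euclidean factor, and this product structure should propagate via the smoothness and $\phi^t$-invariance of $\cal L$, yielding a flat factor incompatible with the nonflatness hypothesis together with the uniform lower bound $\rho>0$ on the complementary directions. Implementing this cleanly in the absence of nonpositive curvature, where de~Rham decomposition is more subtle, is the delicate technical point.
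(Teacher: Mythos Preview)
Your first step is right and matches the paper: partial hyperbolicity follows from Proposition~\ref{5B} and Lemma~\ref{3.1A} exactly as you describe, and the paper sets up the same distributions $E^s,E^u,E^c$ and proves they integrate (the leaves of $E^c$ being the unit tangent bundles of the $k$-flats $F(v)$).

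The gap is in your second step. A de~Rham-type splitting is not what the paper does, and it is not clear it can be made to work: the flats $F(v)$ depend on the \emph{direction} $v$, not just on the foot point, so they do not assemble into a single parallel distribution on $X$ and there is no evident Euclidean factor to peel off. Your own closing paragraph concedes that this is the unresolved point, so as written the proposal is not a proof.

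The paper's actual mechanism is quite different and worth knowing. Assume $\rank\ge 2$ and pick $v'\in W^s(v)$ with $v'\ne v$, $\pi(v)=p$, $\pi(v')=q$. The key technical ingredient is Lemma~\ref{cf}, which characterises membership of $S_qF(v')$ in terms of the subexponential growth of the extrinsic distance $d^q_{t-\rho}(c_{v'}(t),c_{w'}(t))$ outside balls. Using it one shows that for every $w\in S_pF(v)$ the vector $w':=-\grad b_w(q)$ lies in $S_qF(v')$, and moreover the Euclidean geometry of the two flats forces the angles to agree: $\sphericalangle_q(w',v')=\sphericalangle_p(w,v)$. Applying this with $w=-v$ gives $(-v)'=-v'$, hence $d(c_{v'}(-t),c_v(-t))\le d(p,q)$ for all $t\ge 0$. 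Thus $d(c_v(t),c_{v'}(t))$ is bounded for all $t\in\mathbb{R}$; the flat strip theorem makes $c_v$ and $c_{v'}$ parallel, and since $v'\in W^s(v)$ the distance tends to $0$, forcing $v=v'$ --- a contradiction. In short, the contradiction comes not from a product decomposition of $X$ but from the incompatibility between the exponentially contracting stable leaf $W^s(v)$ and the rigidity of the flat leaves $F(v)$, mediated by the asymptotic-angle argument of Lemma~\ref{cf}.
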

As a consequence we obtain:
\begin{theorem}
 Let  $X$ be a simply connected nonflat harmonic manifold without focal points. Then $X$ is of rank~1. Moreover, the geodesic flow is Anosov and $X$ is Gromov hyperbolic.
\end{theorem}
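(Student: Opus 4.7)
The plan is to verify the hypotheses of Theorem~\ref{5C} and then invoke Theorem~\ref{mainthm1} for the Gromov hyperbolic conclusion.

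First I would note the two standing consequences of the hypotheses: since $X$ is harmonic, Proposition~6.57 of \cite{Be} gives that $X$ has bounded sectional curvature; and since $X$ has no focal points, the remark following Definition~5.1 records that every geodesic of $X$ is $1$-stable. Hence, in the notation of Corollary~\ref{4D}, we have $G_1 = SX$.

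Next I would apply Corollary~\ref{4D} with $\gamma = 1$. This yields simultaneously that $\rank(v) = \dim \cal L(v) + 1$ is constant on $SX$, and that there exists a uniform constant $\rho > 0$ such that
$$
\langle (U_v'(0) - S_v'(0))x, x \rangle \ge \rho \langle x, x \rangle
$$
for all $v \in SX$ and all $x \in \cal L^\perp(v)$. These are precisely the hypotheses of Theorem~\ref{5C} (noting that nonflatness and bounded curvature were already in place), which therefore gives $\rank(X) = 1$ and that the geodesic flow on $SX$ is Anosov with respect to the Sasaki metric.

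Finally, the Anosov conclusion combined with Theorem~\ref{mainthm1} (specifically the implication (iv)$\Rightarrow$(i)) shows that $X$ is Gromov hyperbolic. The substantive work of the argument is thus already encoded in Theorem~\ref{5C} and Corollary~\ref{4D}; the present statement is essentially the observation that the no focal points hypothesis places us uniformly into the $\gamma = 1$ case of bounded asymptote, which is the only nontrivial input required. Consequently I do not anticipate a real obstacle beyond the book-keeping of confirming the hypotheses of the preceding theorems are met.
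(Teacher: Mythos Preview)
Your proposal is correct and follows essentially the same route as the paper: the paper's proof is the one-liner ``since $X$ is harmonic and has no focal points, Corollary~\ref{4D} implies that the conditions of Theorem~\ref{5C} are fulfilled,'' and your argument is just a more explicit unpacking of this, together with the explicit invocation of Theorem~\ref{mainthm1} for the Gromov hyperbolicity conclusion (which the paper leaves implicit).
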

\begin{proof}
Since $X$ is harmonic and has no focal points corollary~\ref{4D} implies that the
condition of theorem~\ref{5C} are fulfilled.
\end{proof}
{\bf It remains to prove \ref{5C}.}\\
We first show that the geodesic flow is partially hyperbolic.
Consider the distributions
\begin{eqnarray*}
E^p(v) &=&\{(x + \lambda v ,0)  \mid x \in \cal L(v) , \lambda \in \mathbb{R} \},\\
E^c(v) &=&\{(x + \lambda v ,y)  \mid x, y \in  \cal L(v), \lambda \in \mathbb{R} \},\\
E^s(v)&= &\{x, S_v'(0)x)  \mid x \in \cal L^\perp(v)  \},\\
E^u(v) &= &\{x, U_v'(0)x)  \mid x \in \cal L^\perp(v) \}.
\end{eqnarray*}
We call $E^p$ the parallel,  $E^c$ the central, $E^s$ the stable and $E^u$ the unstable distribution.
Obviously $E^p(v) \subset E^c(v)$. Furthermore, the central, stable and the unstable distribution
are transversal and
$$
\dim E^c (v) = 2  \rank(X) -1, \; \dim E^s (v) = \dim E^u (v) = n - \rank X.
$$
Therefore, the sum of the dimension is equal to $2n-1$ and
hence,
$$
T_vSX = E^c (v) \oplus E^s (v) \oplus E^u(v).
$$
\begin{lemma}
Under the assumption of theorem~\ref{5C}, the geodesic flow is partially hyperbolic with respect to the
Sasaki metric. More precisely, there are constants $ b, c \ge 1$ such that for all $\xi \in E^s(v)$
$$
\| D\phi^t(v) \xi \| \le b \| \xi \| e^{\alpha t}, \; t \ge 0 \;\;  \text{and} \;\;  \| D\phi^t(v) \xi\| \ge \frac{1}{b} \| \xi \| e^{- \alpha t}, \; t \le  0
$$
and for all $\xi \in E^u(v)$
$$
\| D\phi^t(v) \xi \|\ge \frac{1}{b} \| \xi \| e^{\alpha t}, \; t \ge 0 \; \;\text{and} \; \| D\phi^t(v) \xi \| \le b \| \xi \| e^{\alpha t}, \; \; t \le  0.
$$
Furthermore, for all $\xi \in E^c(v)$ and $t \in \mathbb{R}$ we have
$$
\| D\phi^t(v) \xi \| \le c \| \xi \| ( |t| +1),
$$
\end{lemma}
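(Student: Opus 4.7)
The plan is to identify each tangent vector via the Sasaki splitting with a Jacobi field along $c_v$, check invariance of the three distributions under $D\phi^t$, and transfer the estimates of Proposition~\ref{5B} and Lemma~\ref{5A} into flow estimates through the Sasaki metric. Under the isomorphism $(d\pi_v, C_v)$, a vector $\xi = (\xi_1,\xi_2) \in T_v SX$ corresponds to the Jacobi field $J$ along $c_v$ with $J(0) = \xi_1$ and $J'(0) = \xi_2$, and $D\phi^t(v)\xi = (J(t), J'(t))$. The flow relation $S_v'(t) x_t = S_{\phi^t v}'(0)\,(S_v(t)x_t)$ (which expresses that $S_v'(t) S_v(t)^{-1}$ is the Riccati operator of the stable Jacobi tensor based at $\phi^t v$), together with Lemma~\ref{5A}, yields invariance: for $\xi = (x, S_v'(0)x) \in E^s(v)$ with $x \in \mathcal{L}^\perp(v)$ one has $D\phi^t(v)\xi = (S_v(t)x_t,\, S_{\phi^t v}'(0)\, S_v(t)x_t)$, and $S_v(t)x_t \in \mathcal{L}^\perp(\phi^t v)$ by Lemma~\ref{5A}; the cases of $E^u$ and $E^c$ are analogous.

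For the stable estimate, bounded sectional curvature and Lemma~\ref{3.1A} give $\|S_v'(0)\| \le \beta$ in operator norm (since $S_v'(0)$ is symmetric), so the Sasaki norm of $\xi = (x, S_v'(0)x)$ satisfies $\|x\| \le \|\xi\| \le \sqrt{1 + \beta^2}\,\|x\|$. Applying the same inequality at $\phi^t v$ and inserting Proposition~\ref{5B} yields, for $t \ge 0$,
\[
\|D\phi^t(v)\xi\| \le \sqrt{1+\beta^2}\,\|S_v(t)x_t\| \le a\sqrt{1+\beta^2}\,e^{-\alpha t}\|\xi\|,
\]
and for $t \le 0$ the lower bound $\|S_v(t)x_t\| \ge \tfrac{1}{a}\, e^{-\alpha t}\|x\|$ combined with $\|x\| \ge \|\xi\|/\sqrt{1+\beta^2}$ gives $\|D\phi^t(v)\xi\| \ge \tfrac{1}{b}\, e^{-\alpha t}\|\xi\|$ with $b = a\sqrt{1+\beta^2}$. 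The unstable inequalities follow by exactly the same argument with $U_v$ in place of $S_v$ and the roles of positive and negative time reversed.

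For the central direction, write $\xi = (x + \lambda v, y)$ with $x, y \in \mathcal{L}(v)$. By Lemma~\ref{5A} the parallel translates $x_t$ and $y_t$ are themselves Jacobi fields along $c_v$, so the Jacobi field with initial data $(x+\lambda v, y)$ is $J(t) = x_t + t\,y_t + \lambda\,\dot c_v(t)$ with $J'(t) = y_t$. Since $x_t + t y_t \perp \dot c_v(t)$ and parallel translation is isometric, one obtains
\[
\|D\phi^t(v)\xi\|^2 = \|x + ty\|^2 + \lambda^2 + \|y\|^2 \le 2(1+t^2)\bigl(\|x\|^2 + \lambda^2 + \|y\|^2\bigr) = 2(1+t^2)\|\xi\|^2,
\]
yielding $\|D\phi^t(v)\xi\| \le c(|t|+1)\|\xi\|$ with $c = \sqrt{2}$. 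There is no single deep step in the argument; the main obstacle is careful bookkeeping of the Sasaki-metric comparisons, together with the recognition via Lemma~\ref{5A} that the Jacobi field in the central direction is a sum of parallel fields, so that its growth is linear in $t$ rather than exponential.
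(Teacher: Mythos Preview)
Your argument is correct and follows the same route as the paper's: for $\xi=(x,S_v'(0)x)\in E^s(v)$ you factor the Sasaki norm as $\|S_v(t)x_t\|\sqrt{1+\|S_{\phi^t v}'(0)\|^2}$, bound the Riccati term by $\beta$ via Lemma~\ref{3.1A}, and invoke Proposition~\ref{5B}; the paper does exactly this and then declares the remaining cases ``similar.'' Your treatment of the central distribution is in fact more explicit than the paper's, since you write the Jacobi field $J(t)=x_t+t\,y_t+\lambda\,\dot c_v(t)$ out and compute the linear bound directly, whereas the paper does not spell this case out.
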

\begin{proof}
For  $\xi = (x, S_v'(0)x) \in E^s(v)$ we obtain
\begin{eqnarray*}
\| D\phi^t(v) \xi \|& =& \| (S_v(t)x_t , S_v'(t)x_t) \| = \sqrt{\| S_v(t)x_t \|^2 + \| S_v'(t)x_t)\|^2 }\\
&=& \| S_v(t) x_t \| \sqrt{ 1 + \frac{\| S_v'(t)x_t)\|^2}{\| S_v(t)x_t \|^2}} .
\end{eqnarray*}
Moreover, lemma \ref{3.1A} implies
$$
\frac{\| S_v'(t)x_t)\|}{\| S_v(t)x_t \|} \le   \| S_v'(t) S_v(t)^{-1} \| = \| S_{\phi^tv}'(0)\| \le \beta
$$
and proposition~\ref{5B} yields
$$
 \| D\phi^t(v) \xi \| \le  a e^{-\alpha t}\|x\| \sqrt{ 1 + \beta^2} \le  a e^{-\alpha t}\|\xi\| \sqrt{ 1 + \beta^2}
$$
The remaining assertion are obtained in a similar way.
\end{proof}

As we will see all the distributions are integrable. Define
$$
P(v) = \{ w \in SX \mid d(c_w(t) , c_v(t)) \; \text{is constant} \}
$$
to be the subset of $SX$ consisting of vectors tangent to the parallel geodesics of $c_v$
and denote by $F(v) := \pi(P(v))$ its projection on $X$.

\begin{prop}
Assume that $X$ fulfills the assumption of theorem ~\ref{5C}.
Then the distributions $E^p$ and $E^c$ are integrable and provide flow invariant and smooth foliations. The integral manifolds of $ E^p$ are given by $P(v)$.
 The projection $F(v) := \pi(P(v))$ of each leave is a $k$-flat. The integral manifolds of $ E^c$
are given by the unit tangent bundles $SF(v)$ of the $k$-flats.
The distributions $E^s$ and $E^u$ are integrable as well and the leaves are the stable and unstable manifolds
given by
$$
W^s(v) = \{ w \in SX \mid d(c_v(t) , c_w(t)) \le a e^{-c t} d(\pi(v) , \pi(w)) , t \ge 0 \}
$$
and
$$
W^u(v) = \{ w \in SX \mid d(c_v(t) , c_w(t)) \le a e^{-c t} d(\pi(v) , \pi(w)) , t \le 0 \}
$$
for constants $c,a >0$.
\end{prop}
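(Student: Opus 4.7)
The plan is to build each integral manifold geometrically from the structural ingredients already at hand and to read off its tangent space at $v$ in the Sasaki splitting.

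For the parallel and central foliations I begin with Lemma~\ref{5A}: every $x\in\cal L(v)$ yields a parallel Jacobi field $x_t$ along $c_v$, so varying $c_v$ in any such direction produces a geodesic bi-asymptotic to $c_v$. The flat strip theorem for manifolds without focal points (cf.~\cite{Es}) then forces the two geodesics to bound a flat strip. Iterating over $x\in\cal L(v)$ shows that
$$
F(v):=\exp_{\pi(v)}\bigl(\cal L(v)\oplus \R v\bigr)
$$
is a totally geodesic flat of dimension $\rank(X)$, and $P(v)$ is precisely the set of unit vectors along $F(v)$ obtained by parallel translation of $v$. A direct computation in the Sasaki decomposition gives
$T_vP(v)=\{(\dot\gamma(0),0)\mid \dot\gamma(0)\in T_{\pi(v)}F(v)\}=E^p(v)$,
and the leaf of $E^c$ through $v$ is then the unit tangent bundle $SF(v)$, whose tangent space at $v$ is
$\{(x,y)\mid x\in T_{\pi(v)}F(v),\ y\in T_{\pi(v)}F(v)\cap v^\perp\}=E^c(v)$.
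Smooth dependence of the leaves on $v$ comes from Lemma~\ref{5A1}.

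For the stable and unstable foliations I set $c:=\alpha$ from Proposition~\ref{5B} and define $W^s(v)$, $W^u(v)$ by the decay conditions in the statement. If $w\in SX$ lies on a short curve tangent to $\xi=(x,S_v'(0)x)\in E^s(v)$ with $x\in\cal L^\perp(v)$, then the Jacobi-field description of the variation of geodesics combined with Proposition~\ref{5B} yields $d(c_v(t),c_w(t))\le a\,e^{-\alpha t}d(\pi(v),\pi(w))$ for $t\ge 0$, placing $w$ in $W^s(v)$. Conversely, the Hadamard--Perron stable manifold theorem applied to the partially hyperbolic splitting $E^c\oplus E^s\oplus E^u$ of the previous lemma produces a unique smooth $\phi^t$-invariant foliation tangent to $E^s$ whose leaves contract at rate $e^{-\alpha t}$; uniqueness identifies this foliation with $W^s$, and an analogous argument applied to $\phi^{-t}$ handles $W^u$.

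The main obstacle is showing that $F(v)$ is genuinely flat, rather than merely satisfying $R(\cdot,\dot c_v)\dot c_v\equiv 0$ on $\cal L(v)$: vanishing of the sectional curvature on every tangent $2$-plane of $F(v)$ requires applying the flat strip theorem to every pair of parallel directions in $\cal L(v)\oplus\R v$, leveraging the no-focal-point hypothesis at each step. Secondary technical points (the smoothness of the splitting and the identification of $T_vW^s(v)$ and $T_vW^u(v)$ with $E^s(v)$ and $E^u(v)$) are handled by Lemma~\ref{5A1} and by the uniform hyperbolicity estimates of Proposition~\ref{5B}.
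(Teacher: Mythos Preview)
Your approach is close to the paper's but inverts the order of the argument for $E^p$. The paper first proves $E^p$ is involutive via a Frobenius argument: any integral curve of a vector field tangent to $E^p$ produces geodesics parallel to $c_v$ (the variation field being a parallel Jacobi field by Lemma~\ref{5A}), so by transitivity of parallelism the commutator curve $s\mapsto\varphi_\eta^{-s}\circ\varphi_\xi^{-s}\circ\varphi_\eta^{s}\circ\varphi_\xi^{s}(v)$ stays in $P(v)$, forcing $[\xi,\eta](v)\in E^p(v)$. Only after the leaf $P(v)$ exists as a submanifold does the paper invoke the flat strip theorem to conclude that $F(v)=\pi(P(v))$ is a $k$-flat. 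You instead posit $F(v)=\exp_{\pi(v)}(\cal L(v)\oplus\R v)$ up front and try to verify flatness directly. Your acknowledged ``main obstacle''---that pairwise flat strips must assemble into a single $k$-flat---is exactly the content of involutivity: you need $\cal L(w)\oplus\R w$ to be tangent to your candidate $F(v)$ for every unit $w$ tangent to it, and the cleanest way to obtain this is the paper's commutator computation. So your route is not wrong, but the step you flag as hard is precisely what the paper's Frobenius argument supplies; ``iterating the flat strip theorem over pairs'' does not by itself close the gap.

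For $E^s$ and $E^u$ the two proofs agree: both defer to the partial hyperbolicity established in the preceding lemma (you cite Hadamard--Perron explicitly, the paper simply says ``since $\phi^t$ defines a partially hyperbolic flow the stable and unstable distributions are integrable''). One minor point to tighten: Proposition~\ref{5B} controls the Jacobi field $\|S_v(t)x_t\|$, not the distance $d(c_v(t),c_w(t))$; the passage from the infinitesimal estimate to the metric decay in the description of $W^s(v)$ uses convexity of the distance function under the no-focal-points hypothesis and should be stated.
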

\begin{proof}
The integrability of $E^p$ follows as in  lemma~2.2 in \cite{BBE}, which was given under the assumption of nonpositive curvature and compact quotient, but not necessarily constant rank. By lemma \ref{5A1} the
 distribution $E^p(v) = \{ (x + \lambda v, 0) \mid x \in \cal L(v)\}$ is smooth. Choose a smooth curve
$\rho: [0, a] \to SX $ with $\rho(0) =v$  tangent to $E^p$, i.e. $\frac{d}{ds} \rho(s) =  (x(s) + \lambda(s) \rho(s), 0) \in E^p(\rho(s))$, and therefore, $x(s) \in \cal L(\gamma(s))$. Consider for each $t \in \mathbb{R}$ the curve
$\gamma_t:I \to SX$ with $\gamma_t(s) = c_{\rho(s)}(t)$. Hence,
$$
 \frac{d}{ds}\gamma_t(s)= J_{\rho(s)}(t),
 $$
 where $J_{\rho(s)}(t)$ is the parallel Jacobi field with $J_{\rho(s)}(0)=x(s) + \lambda(s) \rho(s)$.
Consequently,  the length of $\gamma_t(I )$ is constant, the distance $d(c_{v}(t),c_{\rho(s)}(t)$
is bounded and the geodesic $c_{\rho(s)}(t)$ is parallel to $c_v$. If $\xi$ and $\eta$ are two smooth vector
fields tangent to $E^p$ the commutator $[\xi, \eta]$ is tangent to $E^p$ as well.
To prove this consider the flows $\varphi_\xi$ and $\varphi_\eta$. Then for $s$
$$
f(s^2) =  \varphi_\eta^{-s} \circ \varphi_\xi^{-s} \circ \varphi_\eta^s \circ \varphi_\xi^s(v)
$$
is parallel to $v$ and hence
$$
\left.\frac{d}{ds}\right|_{s =0} f(s^2) = [\xi, \eta](v) \in E^p(v).
$$
Therefore, each leaf of $E^p$
 though $v$ is a $k$-dimensional submanifold of $SX$, given by $P(v)$.
 Now the flat strip theorem (see \cite{Es}) implies that the projections $F(v) := \pi(P(v))$ are $k$-flats, i.e., totally geodesic flat spaces
isometric to the Euclidean space $\mathbb{R}^k$. Therefore, the unit tangent bundle of a $k$-flat
is flow invariant and as one easily checks tangent to the central distribution $E^c$.\\
Since $\phi^t$ defines a partially hyperbolic flow
the stable and unstable distributions $E^s$ and $E^u$ are integrable and tangent to $W^s$ and $W^u$.

\end{proof}

Let $X$ be a simply connected manifold without conjugate points. Let $B(q,t)$ be the open ball of radius $t$ about
$q$. For $q_1, q_2 \in X \setminus B(q,t)$ we call
\begin{eqnarray*}
d^q_t(q_1, q_2) &:= &\inf \{ L(\gamma) \mid  \;
\gamma:[a,b] \to  X \setminus B(q,t) \\
&& \text{ piecewise smooth curve
joining } \; q_1 \text{ and } q_2 \}.
\end{eqnarray*}

The following lemma is important for the proof of theorem~\ref{5C}.
\begin{lemma}\label{cf}
Let $X$ be a simply connected manifold without focal points which fulfills the assumption of theorem~\ref{5C}
and assume that its rank is at least 2. Given  $v \in S_qX$ then for each $w \in S_qX$ the following is equivalent
\begin{enumerate}
\item[(a)] $w \in  S_q F(v)$,
\item[(b)] $
\displaystyle\varlimsup_{t\to \infty} \frac{1}{t}d^q_{t}(c_v(t) , c_w(t)) < \infty,
$
\item[(c)] there exists $\rho \ge 0$ such that
$
\displaystyle\varlimsup_{t\to \infty} \frac{1}{t}d^q_{t -\rho}(c_v(t) , c_w(t)) < \infty.
$
\end{enumerate}
\end{lemma}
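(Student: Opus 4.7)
The two easy directions first. For (a) $\Rightarrow$ (b): when $w \in S_qF(v)$, both $c_v$ and $c_w$ lie inside the totally geodesic Euclidean $r$-flat $F(v)$ with $r = \rank(X) \ge 2$, and $c_v(t), c_w(t)$ can be joined by the circular arc of radius $t$ in the $2$-plane through $q$ spanned by $v, w$; this arc has length $t\,\angle(v,w)$ and lies on the Euclidean sphere of radius $t$ inside $F(v)$, hence outside $B(q,t)$ in $X$. The implication (b) $\Rightarrow$ (c) is immediate: enlarging the forbidden ball only enlarges the detour infimum, so $d^q_{t-\rho} \le d^q_t$ whenever $\rho \ge 0$.

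For the decisive implication (c) $\Rightarrow$ (a) I argue by contradiction. Assume (c) holds with constants $\rho, C$ and $w \notin S_qF(v)$. Pick a sequence $t_n \to \infty$ and piecewise smooth curves $\gamma_n \colon [0, L_n] \to X \setminus B(q, t_n-\rho)$ in arc-length parametrization joining $c_v(t_n)$ to $c_w(t_n)$, with $L_n \le 2C t_n$. Since $\exp_q$ is a diffeomorphism, write $\gamma_n(\tau) = \exp_q(s_n(\tau)\, u_n(\tau))$ with $u_n(\tau) \in S_qX$, $s_n(\tau) \ge t_n - \rho$, and $u_n(0)=v$, $u_n(L_n)=w$. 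The Jacobi-field variation formula gives
\[
1 \;=\; |\dot\gamma_n|^2 \;=\; |s_n'|^2 + |A_{u_n}(s_n)\,\dot u_n|^2.
\]
Decomposing $\dot u_n = \dot u_n^{\parallel} + \dot u_n^{\perp}$ along $\cal L(u_n) \oplus \cal L^{\perp}(u_n)$ and using Lemma~\ref{5A} (which makes the images $A_{u_n}(s_n) \dot u_n^\parallel$ and $A_{u_n}(s_n) \dot u_n^\perp$ orthogonal) together with Proposition~\ref{5B} (giving $\|A_{u_n}(s_n) \dot u_n^{\parallel}\| = s_n\|\dot u_n^{\parallel}\|$ and $\|A_{u_n}(s_n) \dot u_n^{\perp}\| \ge a' e^{\alpha s_n}\|\dot u_n^{\perp}\|$ for $s_n \ge 1$), one obtains the pointwise bounds $|\dot u_n^{\parallel}| \le 1/(t_n-\rho)$ and $|\dot u_n^{\perp}| \le 1/(a' e^{\alpha(t_n-\rho)})$. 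Integrating over $[0, L_n]$ yields a uniform bound on the length of $u_n$ in $S_qX$, while the transverse length satisfies $\int |\dot u_n^{\perp}|\,d\tau \le L_n / (a' e^{\alpha(t_n-\rho)}) \to 0$.

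Rescaling $u_n$ to $[0,1]$ with constant speed, Arzel\`a--Ascoli produces (after passing to a subsequence) a uniform Lipschitz limit $u_\infty\colon [0,1] \to S_qX$ joining $v$ to $w$, and weak $L^2$-compactness yields $\dot u_n \rightharpoonup \dot u_\infty$. For any smooth $1$-form $\omega$ on $S_qX$ that annihilates $\cal L$, the vanishing of the transverse length combined with uniform convergence $\omega_{u_n} \to \omega_{u_\infty}$ forces
\[
\int_0^1 \omega_{u_\infty}(\dot u_\infty)\,d\tau \;=\; \lim_n \int \omega_{u_n}(\dot u_n)\,d\tau \;=\; \lim_n \int \omega_{u_n}(\dot u_n^{\perp})\,d\tau \;=\; 0,
\]
and since $\omega$ can be localized with bump functions this gives $\dot u_\infty(\tau) \in \cal L(u_\infty(\tau))$ for almost every $\tau$. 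The distribution $\cal L$ is smooth by Lemma~\ref{5A1}, and is integrable with compact leaves $S_qF(\cdot)$ by the preceding proposition; a standard Frobenius-type argument then shows that the absolutely continuous curve $u_\infty$ stays in the single leaf $S_qF(v)$, so $w = u_\infty(1) \in S_qF(v)$, contradicting our assumption. The main obstacle is precisely this last step, which genuinely relies on the smoothness of $\cal L$ (hence the constant rank hypothesis) and compactness of the leaves; the quantitative transversal contraction $|\dot u_n^{\perp}| = O(e^{-\alpha(t_n-\rho)})$ is comparatively cheap, falling out directly from Proposition~\ref{5B}.
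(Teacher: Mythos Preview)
Your proof is correct. For (a)$\Rightarrow$(b) and (b)$\Rightarrow$(c) you match the paper. For (c)$\Rightarrow$(a) the paper argues more directly: given any admissible curve $\gamma$ in $X\setminus B(q,t-\rho)$, it radially projects to a curve $x$ in $S_qX$ from $v$ to $w$, decomposes $x'=y+z$ along $\cal L\oplus\cal L^\perp$, and simply asserts that since $w\notin S_qF(v)$ there is a constant $b>0$, independent of the curve, with $\int\|z\|\,ds\ge b$; then Proposition~\ref{5B} yields $L(\gamma)\ge a'e^{\alpha(t-\rho)}b$, an exponential lower bound contradicting (c). Your Arzel\`a--Ascoli and weak-limit argument is precisely the contrapositive of this unproved transverse-length bound: if curves with transverse length tending to zero existed, a limit curve would be a.e.\ tangent to $\cal L$ and hence (by Frobenius in foliated charts) trapped in the leaf $S_qF(v)$. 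So your route is more self-contained---it makes explicit the foliation fact the paper leaves to the reader---at the price of the extra analytic machinery. One small remark: the orthogonality of $A_{u_n}(s_n)\dot u_n^{\parallel}$ and $A_{u_n}(s_n)\dot u_n^{\perp}$ relies not just on Lemma~\ref{5A} but on the proof of Proposition~\ref{5B}, where it is shown that $A_v(t)$ maps $\cal L^\perp(v)$ into $\cal L^\perp(\phi^t v)$.
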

\begin{proof}
We can assume that $X$ is not flat.
Assume $w \in  S_q F(v)$. Consider a shortest curve $x: [0, a] \to S_q F(v)$ such that
$x(0) = v$, $x(a) = w$ and $\| x'(s) \| = 1$. In particular, $x'(s) \in \cal L(x(s))$. Then
$\gamma_t(s) = \exp_q(t x(s))$ connects $c_v(t)$ and  $ c_w(t)$ and the image is in the complement
of $B(q,t)$. Furthermore, $\dot \gamma_t(s)= J(t)$ is the Jacobi field along the geodesic $c_{x(s)}$
with $J(0) = 0 $ and $J'(0) = x'(s)$. Hence, $J(t) = A_{x(s)}(t)(x'(s))_t$ where
$A_{x(s)}$ is the Jacobi tensor along  $c_{x(s)}$ with $A_{x(s)}(0)$
$ = 0$ and  $A_{x(s)}'(0) = \id$.
Since $x'(s) \in \cal L(x(s))$ and $\| x'(s) \| = 1$ proposition~\ref{5B} implies
$$
\| \dot \gamma_t(s) \| = \| A_{x(s)}(t)(x'(s))_t \| = t
$$
Hence, $L(\gamma_t) = ta$ and, therefore, (a) implies (b).\\

Since for all $\rho \ge 0$ and $q_1, q_2 \in X \setminus B(q,t)$ we have
$d^q_{t -\rho}(q_1, q_2) \le d^q_t(q_1, q_2)$, assertion (c) follows from (b).\\

Now assume that (c) holds and $w \notin  S_q F(v)$. Let $\gamma:[0, 1] \to X$ be a smooth
curve such that $\gamma(0) = v$, $\gamma(1) = w$ and $d( \gamma(s) , q) = f(s) \ge t - \rho$.
Consider the curve $x: [0,1] \to S_qX$ such that $\gamma (s) = \exp_q(f(s) x(s))$ and the geodesic variation
$$
\varphi(s,u) = \exp_q(u x(s)), \; \; \text{where}  \; \; 0 \le u \le f(s).
$$
Then
$$
\frac{\partial}{\partial s} \varphi(s,u) = J_{x(s)}(u)
$$
is the perpendicular Jacobi field along $c_{x(s)}(u)$ with initial conditions
$J_{x(s)}(0) =0$ and $J'_{x(s)}(0) = x'(s)$. Since
\begin{eqnarray*}
\left.\frac{\partial}{\partial s} \right|_{s=s_0} \gamma(s) & =&
\left.\frac{\partial}{\partial s} \right|_{s=s_0} \varphi(s, f(s_0)) +
\left.\frac{\partial}{\partial u}\right|_{u=f(s_0)} \varphi(s_0,u) f'(s_0)\\
& =& J_{x(s_0)}(f(s_0)) + \dot c_{x(s_0)}(f(s_0)) f'(s_0)
\end{eqnarray*}
the estimate
$$
\| \left.\frac{\partial}{\partial s} \right|_{s=s_0} \gamma(s) \| \ge  \|J_{x(s_0)}(f(s_0))\|
$$
holds.  As above we have $J_{x(s)}(u) = A_{x(s)}(u)(x'(s))_u$. Decompose
$ x'(s) = y(s) + z(s)$, where $y(s) \in \cal L(x(s)$ and $z(s) \in \cal L^\perp(x(s)$. Since
$w \notin  S_q F(v)$ there is a constant $b >0$ such
$\int \limits_{0}^1 \|z(s)\| ds \ge b$. Using proposition~\ref{5B}, we obtain
$$
L(\gamma) = \int \limits_{0}^1 \| \dot \gamma(s) \| ds \ge  a' e^{\alpha( t -\rho)} \int \limits_{0}^1 \|z(s)\| ds \ge  a' e^{\alpha( t -\rho)} b
$$
for all $t >0$ in contradiction to (c).
\end{proof}

{\it Proof of theorem \ref{5C}}\\
Assume that the rank of X is at least 2. Consider $v \in S_pX$ and $v' \in W^s(v)$ where $v' \not= v$ and $\pi(v') =q$. This implies that
$d(c_v(t) , c_{v'}(t)) $ converges to $0$ as $t$ tends to $\infty$. For each $w \in S_pF(v)$  define $w' = - \grad b_w(q)$. Since $X$ has no
focal points
$d(c_{w'}(t), c_w(t)) \le d(p,q)$. In particular for  $\rho = d(p,q)$ we obtain
\begin{eqnarray*}
\frac{1}{t}d^p_{t -\rho} (c_{w'}(t), c_{v'}(t)) &\le&
\frac{1}{t}\Big( d^p_{t -\rho}(  c_{w'}(t), c_{w}(t) ) + d^p_{t -\rho}(c_{w}(t), c_{v}(t)) \\
&& +  d^p_{t -\rho}( c_{v}(t), c_{v'}(t)) \Big)
\end{eqnarray*}
is bounded for $t \ge 0$. Using Lemma~\ref{cf} this implies $w' \in S_qF(v')$. Furthermore,
$$
\theta':= \sphericalangle_q(w', v') = \sphericalangle_p(w,v) =: \theta.
$$
where $\theta, \theta' \in [- \pi, \pi]$.
To see this we note that $c_v$ and $c_w$ respectively $c_{v'}$ and $c_{w'}$ are lying in the Euclidean spaces $F(v)$ resp. $F(v')$. Therefore, we have
$$
d(c_v(t), c_w(t)) = 2t \sin( \frac{\theta}{2}) \; \; \text{and} \;\; d(c_{v'}(t), c_{w'}(t)) = 2t \sin( \frac{\theta'}{2})
$$
and the triangle inequality implies
\begin{eqnarray*}
2t | \sin( \frac{\theta}{2})-  \sin( \frac{\theta'}{2}) | & =&   |  d(c_v(t), c_w(t) - d(c_{v'}(t), c_{w'}(t)) | \\
& \le& | d(c_v(t) , c_{v'}(t))+ d(c_{w'}(t), c_w(t)) | \le A
\end{eqnarray*}
for some constant $A >0$ and for all $t \ge 0$. Hence $\theta = \theta'$.
In particular for $w = -v$ we obtain
$$
w' = (-v)' = -(v')
$$
since
$$
 \sphericalangle_q((-v)', v') = \sphericalangle_p(-v ,v) = \pi.
$$
Consequently
$$
 d(c_{v'}(-t), c_v(-t))= d(c_{-(v')}(t) , c_{-v}(t)) = d(c_{(-v)'}(t) , c_{-v}(t) \le d(p,q)
$$
for all $t \ge 0$.
Since $ d(c_v(t) , c_{v'}(t))$ converges to $0$ for $ t \to \infty$ we have
$v= v'$ which is in contradiction to the assumption.
 Hence the rank of
$X$ must be one.

\section{Appendix}
In this appendix we collect properties of $(1,1)$-tensors which are used in this paper.
\begin{dfn}
Let $B: I \to \End{Nc} $ be a $(1,1)$-tensor along a geodesic $c: I \to M$. Then for $t_0, t \in I$
we define
$
 \int\limits_{t_0}^{t} B(s) ds :I \to \End{Nc} $
 via
 $$
\left\langle \int\limits_{t_0}^{t} B(s) ds \ x_t, y_t \right \rangle :=
 \int\limits_{t_0}^{t}  \langle B(s)  x_s, y_s \rangle ds,
 $$
 where $x_t, y_t$ are orthogonal parallel vector fields along $c$.
\end{dfn}
\begin{remark} If $B: I \to \End{Nc} $ is a symmetric, positive definite $(1,1)$-tensor along a geodesic $c: I \to M$, then  $
 \int\limits_{t_0}^{t} B(s) ds :I \to \End{Nc} $  symmetric, positive definite as well.
 Furthermore, the following estimates hold
 \begin{eqnarray}
 \left\| \int\limits_{t_0}^{t} B(s) ds  \right\| &\le& \int\limits_{t_0}^{t} \|B(s) \|ds \\
  \left\| \left(\int\limits_{t_0}^{t} B(s) \right)^{-1} ds \right \|^{-1}
  &\ge&  \int\limits_{t_0}^{t}\|B(s) ^{-1}\|^{-1} ds \label{symtensor}
  \end{eqnarray}
 \end{remark}
\begin{prop} \label{jac2}
Let $M$ be a Riemannian manifold.
Let $Y: I \to \End{Nc} $ be a Lagrange tensor along a geodesic $c: I \to M$ which is nonsingular
for all $t \in I$. Then for $t_0 \in I$ and any other Jacobi tensor $Z$ along $c$, there exists constant
tensors $C_1$ and $C_2$ such that
$$
Z(t) = Y(t) \left( \int\limits_{t_0}^{t}( Y^{\ast} Y)^{-1}(s) ds \ C_1 + C_2 \right)
$$
for all $t \in I$.

\end{prop}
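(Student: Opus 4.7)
The plan is a classical reduction-of-order argument: since $Y(t)$ is invertible on $I$, introduce the auxiliary $(1,1)$-tensor $F(t) := Y^{-1}(t) Z(t)$, so that $Z = Y F$, and derive a first-order ODE for $F'$ that can be integrated twice to produce the desired formula. The two given tensors $C_1, C_2$ will emerge as the two constants of integration.

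First I would compute $Z' = Y' F + Y F'$ and $Z'' = Y'' F + 2 Y' F' + Y F''$, substitute into the Jacobi equation $Z'' + R Z = 0$, and use $Y'' + R Y = 0$ to cancel the zeroth-order term in $F$. This leaves the second-order equation $2 Y' F' + Y F'' = 0$. Multiplying on the left by $Y^*$ gives
$$2 Y^* Y' F' + Y^* Y F'' = 0.$$
Now comes the key step: because $Y$ is a Lagrange tensor, $W(Y,Y) = Y'^{*} Y - Y^{*} Y' = 0$, so $Y^{*} Y'$ is symmetric and equals $Y'^{*} Y$. Consequently
$$(Y^* Y F')' = (Y'^{*} Y + Y^{*} Y') F' + Y^* Y F'' = 2 Y^{*} Y' F' + Y^* Y F'' = 0,$$
and therefore $Y^* Y F'$ is a parallel, hence constant, $(1,1)$-tensor. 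Call this constant $C_1$ (it is essentially $-W(Y,Z)$, which is constant anyway by the general Wronskian calculus quoted at the start of the section).

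From $Y^* Y F' = C_1$ and the invertibility of $Y$ (hence of $Y^* Y$) I would conclude $F'(t) = (Y^* Y)^{-1}(t)\, C_1$. Integrating from $t_0$ to $t$ in the sense of the definition given just above the proposition yields
$$F(t) = \int_{t_0}^{t} (Y^{*} Y)^{-1}(s)\, ds\; C_1 + C_2, \qquad C_2 := F(t_0) = Y^{-1}(t_0) Z(t_0),$$
and multiplying by $Y(t)$ on the left recovers the stated representation.

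I do not expect any serious obstacle. The only points needing a little care are the algebraic manipulation with the adjoint tensor (to see that the Lagrange condition for $Y$ is exactly what collapses $(Y^{*} Y F')'$ to zero) and the verification that the integral of the symmetric positive-definite tensor $(Y^{*} Y)^{-1}$ is well-defined and invertible; the latter is guaranteed by the definition and the remark preceding the proposition.
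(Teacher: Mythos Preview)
Your proposal is correct and follows essentially the same reduction-of-order strategy as the paper: write $Z=YF$, use the Jacobi equations to obtain $2Y'F'+YF''=0$, and then exploit the Lagrange condition to conclude $F'=(Y^{\ast}Y)^{-1}C_1$. The only cosmetic difference is that the paper verifies $(Y^{\ast}Y)^{-1}$ is a solution of the first-order ODE $G'+2Y^{-1}Y'G=0$ and appeals to uniqueness, whereas you multiply by $Y^{\ast}$ and recognise $(Y^{\ast}YF')'=0$ directly; your version is arguably a shade cleaner, but the arguments are the same in substance.
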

\begin{proof}
Since $Y$ is nonsingular there exists a $(1,1)$-tensor $B$ along $C$ such that
$$
Z(t) = Y(t) B(t).
$$
Differentiating this equation twice yields
$$
Z''(t) = Y''(t) B(t) + 2 Y'(t) B'(t) + Y(t) B''(t).
$$
Since $Y$ and $Z$ are Jacobi tensors, we obtain
$$
Z''(t) = -R(t)Z(t) = R(t) Y(t) B(t) \quad \text{and} \quad Y''(t) = -R(t)Y(t)
$$
and, therefore,
$$
2 Y'(t) B'(t) + Y(t) B''(t) = 0.
$$
Since $Y$ is nonsingular $B'$ is a solution of the differential equation
\begin{equation} \label{eqjac}
2Y^{-1}(t) Y'(t) G(t) + G'(t) = 0.
\end{equation}
Since $Y$ is a Lagrange tensor we have
$$
W(Y, Y) = Y^{\ast '} (t) Y(t) - Y^{\ast}(t) Y'(t) = 0.
$$
Therefore, $G(t) := (Y^{\ast} Y)^{-1}(t)$
is a solution of \eqref{eqjac} as the following computation shows
\begin{eqnarray*}
G'(t) &= &\left( (Y^{\ast} Y)^{-1}) \right)'(t) = (Y^{\ast} Y)^{-1}(t) (Y^{\ast} Y)'(t) (Y^{\ast} Y)^{-1}(t) \\
&=& - Y^{-1}(t) Y^{\ast^{-1}}(t) \left( Y^{\ast'}(t)Y(t) + Y^{\ast}(t) Y'(t) \right)Y^{-1}(t)  Y^{\ast^{-1}}(t)\\
&=& -2  Y^{-1}(t) Y^{\ast^{-1}}(t) Y^{\ast}(t) Y'(t)Y^{-1}(t)  Y^{\ast^{-1}}(t)\\
&=& -2 Y^{-1}(t)Y'(t) G(t).
\end{eqnarray*}
Hence, an arbitrary solution of \eqref{eqjac} is of the form $G(t) = (Y^{\ast} Y)^{-1}(t) C$, where
$C$ is a constant tensor along $c$.
Therefore, $B'(t) = (Y^{\ast} Y)^{-1}(t) C_1$ and integration yields
$$
B(t) = \int\limits_{t_0}^{t}\left(Y^{\ast} Y \right)^{-1}(s) ds \ C_1 + C_2
$$
for constant tensors $C_1, C_2$ along $c$.
\end{proof}

The following properties of the stable and unstable Jacobi tensors $U_v$ and $S_v$ defined in section 2 are
 frequently used in this paper.

\begin{lemma} \label{central}
Let $M$ be a manifold without conjugate points. Then for all $v \in SM$ we have
\begin{equation}\label{c1}
S_{\phi^u (v)}(t) = S_v(t+u) S_v^{-1}(u) \; \; \text{and} \; \;U_{\phi^u (v)}(t) = U_v(t+u) U_v^{-1}(u)
\end{equation}
\begin{equation}\label{c2}
S _{\phi^t(v)}'(0) =  S_v'(t) S_v^{-1}(t)) \; \; \text{and} \; \; U_{\phi^t(v)}'(0)=U_v'(t) U_v^{-1}(t)
\end{equation}
\begin{eqnarray}\label{c3}
U_{\phi^t(v)}'(0) - S _{\phi^t(v)}'(0) &=&U_v^{{\ast}^{-1}}(t)(U_v'(0) -S_v'(0)) S_v^{-1}(t)\\
&=&S_v^{\ast -1}(t)(U_v'(0) -S_v'(0)) U_v^{-1}(t). \nonumber
\end{eqnarray}
Furthermore,
\begin{equation}\label{c4}
U_{\phi^t(v)}'(0) - S _{\phi^t(v)}'(0) = S^{\ast-1}_v(t) \left(\int\limits_{-\infty}^{t} (S_v^{\ast}S_v)^{-1}(u) du \right)^{-1} S_v^{-1}(t).
\end{equation}
Furthermore, for all $x \in \cal L(v) $ we have $S_v(t)x_t = U_v(t) x_t \in  \cal L(\phi^tv) $.
Moreover, $S_v(t)y = U_v(t)y \in \cal L(\phi^tv)$ for all $y \in \cal L(\phi^tv)$.
\end{lemma}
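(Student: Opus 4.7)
The plan is to prove the four displayed identities in order---using uniqueness of Jacobi tensors for (c1), differentiation for (c2), the constant Wronskian trick for (c3), and proposition~\ref{jac2} applied to a Lagrangian background for (c4)---and then deduce the $\mathcal L$-invariance from ODE uniqueness combined with (c2). For (c1), I observe that both sides are Jacobi tensors along $c_{\phi^u v}$ with value $\id$ at time $0$. Working with the finite approximants, the tensor $t \mapsto S_{v,r}(t+u) S_{v,r}^{-1}(u)$ equals $\id$ at $t=0$ and vanishes at $t = r-u$, hence coincides with $S_{\phi^u v,\,r-u}(t)$ by uniqueness of solutions to the Jacobi equation; letting $r \to \infty$ delivers (c1), and the $U$-version is analogous. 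Differentiating (c1) at $t=0$ produces (c2).

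For (c3), the Wronskian $W(U_v, S_v)(t) = U_v'^{\ast}(t) S_v(t) - U_v^{\ast}(t) S_v'(t)$ is parallel, so it equals its value at $t=0$, namely $U_v'(0) - S_v'(0)$ (using $U_v(0)=S_v(0)=\id$ together with the symmetry of $U_v'(0)$, which holds because $U_v$ is Lagrangian as a limit of the Lagrange tensors $U_{v,r}$). Left-multiplying this identity by $U_v^{-\ast}(t)$ and right-multiplying by $S_v^{-1}(t)$, then invoking (c2) and the symmetry of $U'_{\phi^t v}(0)$, produces the first form of (c3); transposing yields the second form.

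The main obstacle is (c4). I apply proposition~\ref{jac2} with the Lagrange tensor $Y = S_w$ (nonsingular on $\mathbb{R}$ since $M$ has no conjugate points) and the Jacobi tensor $Z = U_{w,r}$, using basepoint $t_0 = -r$. The boundary conditions $U_{w,r}(-r) = 0$ and $U_{w,r}(0) = \id$ determine the two constants of integration uniquely; differentiating at $0$ and then passing to the limit $r \to \infty$ gives
$$
U_w'(0) - S_w'(0) = \left(\int_{-\infty}^0 (S_w^{\ast} S_w)^{-1}(s)\,ds\right)^{-1}.
$$
Applying this at $w = \phi^t v$ and using (c1) in the form $S_{\phi^t v}(s) = S_v(s+t) S_v^{-1}(t)$, the substitution $u = s + t$ identifies the right-hand integral with $S_v(t)\bigl(\int_{-\infty}^t (S_v^{\ast} S_v)^{-1}(u)\,du\bigr) S_v^{\ast}(t)$; inverting then yields (c4).

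For the invariance of $\mathcal L$, let $x \in \mathcal L(v)$. Then $t \mapsto S_v(t) x_t$ and $t \mapsto U_v(t) x_t$ are Jacobi fields along $c_v$ sharing the initial value $x$ and the initial derivative $S_v'(0) x = U_v'(0) x$, so they coincide by ODE uniqueness. Differentiating the resulting equality shows $S_v'(t) x_t = U_v'(t) x_t$, and (c2) then gives $(U'_{\phi^t v}(0) - S'_{\phi^t v}(0))(S_v(t) x_t) = 0$, i.e.\ $S_v(t) x_t = U_v(t) x_t \in \mathcal L(\phi^t v)$. The converse surjectivity statement---that parallel translation realizes every element of $\mathcal L(\phi^t v)$ in this way---follows by the same argument applied along the reversed geodesic, interchanging the roles of $v$ and $\phi^t v$.
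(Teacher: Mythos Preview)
Your proof is correct and, for \eqref{c1}, \eqref{c2}, \eqref{c3} and the $\mathcal L$-invariance, follows essentially the same route as the paper (uniqueness of Jacobi tensors for \eqref{c1}, differentiation for \eqref{c2}, the constant Wronskian for \eqref{c3}, and ODE uniqueness for the final assertions). The only genuine difference is in the derivation of \eqref{c4}. The paper works with the \emph{approximating} tensors: it expresses $U_{v,r}$ in terms of $S_{v,s}$ via proposition~\ref{jac2}, first lets $r\to\infty$, then $s\to\infty$, and finally substitutes the resulting identity for $U_v'(0)-S_v'(0)$ into the second form of \eqref{c3}. You instead apply proposition~\ref{jac2} directly with the limit tensor $Y=S_w$, obtain $U_w'(0)-S_w'(0)=\bigl(\int_{-\infty}^0(S_w^\ast S_w)^{-1}\bigr)^{-1}$ after a single limit, and then shift from $w=\phi^t v$ back to $v$ using \eqref{c1}. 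Your route is somewhat cleaner---one limit rather than two, and no appeal to \eqref{c3}---but it relies on knowing that the stable tensor $S_w$ itself is nonsingular on all of $\mathbb R$; this is true on manifolds without conjugate points, but it is a step beyond the immediate statement ``no conjugate points,'' which literally only gives nonsingularity of the approximants $S_{w,r}$ on $(-\infty,r)$. The paper's double-approximation approach sidesteps that issue. Both arguments share the same mild imprecision when $\mathcal L(v)\neq 0$, since then the integral $\int_{-\infty}^t(S_v^\ast S_v)^{-1}$ need not converge as an endomorphism; the identity is really being used on $\mathcal L^\perp$, where everything is well-defined.
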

\begin{proof}
Let $S_{v,r}(t)$ the Jacobi tensor  along the geodesic $c_v(t)$ with $S_{v,r}(0) = \id$ and
$S_{v,r}(r) = 0$. Then
$$
S_{\phi_u v,r}(t) = S_{v, r +u}(t+u) S_{v, r+u}^{-1}(u),
$$
since both sides define for fixed $u $ and $r$  Jacobi tensor, which
agree at $t=0$ and $t = r$. Taking the limit $r \to \infty$ yields the first assertion
of \eqref{c1}. The second follows with a similar argument.
Differentiating this relations yields \eqref{c2}.\\
To prove \eqref{c3} consider
the Wronskian $W(U_v, S_v)(t) $ given by
$$
W(U_v, S_v)(t) = U_v^{\ast'}(t) S_v(t) - U_v^{\ast}(t) S_v'(t) = W(U_v, S_v)(0) =B_v,
$$
where
$$
B_v = U_v'(0) -S_v'(0).
$$
This yields
$$
(U_v'(t) U_v^{-1}(t))^{\ast}- S_v'(t) S_v^{-1}(t) = U_v^{\ast -1}(t) B_v S_v^{-1}(t).
$$
Since
$$
(U_v'(t) U_v^{-1}(t))= U_{\phi^t(v)}'(0) \; \; \text{and} \; \; S_v'(t) S_v^{-1}(t))= S_{\phi^t(v)}'(0)
$$
are symmetric, we obtain the first identity and taking the adjoint we obtain the second one.\\
 To prove \eqref{c4} consider $ 0 <r, s$ and  $t \in (-r, s)$.
Then using \ref{jac1}
 there exists a constant tensor $K_{s,r}$ such that
$$
U_{v,r}(t) = S_{v,s}(t) \int\limits_{-r}^{t} (S_{v,s}^{\ast}S_{v,s})^{-1}(u) du K_{s,r}.
$$
Evaluating and differentiating this identity at $t= 0$ we obtain the equation
$$
U_{v,r}(t) = S_{v,s}(t) \int\limits_{-r}^{t} (S_{v,s}^{\ast}S_{v,s})^{-1}(u) du (U_{v,r}'(0) - S_{v,s}'(0)).
$$
Furthermore, for all $t < s$ the limit
$$
\lim\limits_{r \to \infty}\int\limits_{-r}^{t} (S_{v,s}^{\ast}S_{v,s})^{-1}(u) du = \int\limits_{-\infty}^{t} (S_{v,s}^{\ast}S_{v,s})^{-1}(u) du
$$
exists and is invertible. Hence,
\begin{equation} \label{eq1}
\left(\int\limits_{-\infty}^{t} (S_{v,s}^{\ast}S_{v,s})^{-1}(u) du \right)^{-1} S_{v,s}(t)^{-1}U_v(t)=
U_v'(0) - S_{v,s}'(0).
\end{equation}
Passing to the limit $s \to \infty$, we obtain
 $$
\left(\int\limits_{-\infty}^{t} (S_{v}^{\ast}S_{v})^{-1}(u) du \right)^{-1} S_{v}(t)^{-1}U_v(t)=
U_v'(0) - S_{v}'(0).
$$

Inserting \eqref{eq1} into the second identity of \eqref{c3}, we obtain
$$
U_{\phi^t(v)}'(0) - S _{\phi^t(v)}'(0) = S^{\ast-1}_v(t) \left(\int\limits_{-\infty}^{t} (S_v^{\ast}S_v)^{-1}(u) du \right)^{-1} S_v^{-1}(t).
$$
Finally let $x \in \cal L(v)$ . i.e., $U_v'(0)x = S_{v}'(0)x$. Then, $U_v(t)x_t = S_v(t)x_t $ and
$U'_v(t)x_t = S'_v(t)x_t$. This implies
$$
U_{\phi^t(v)}'(0) U_v(t)x_t   = U_v'(t) U_v^{-1}(t)U_v(t)x_t  = S_v'(t)x_t= S_{\phi^t(v)}'(0) S_v(t)x_t
$$
and therefore, $ U_v(t)x_t = S_v(t)x_t  \in \cal L(\phi^t(v))$.
\end{proof}

{\it Acknowledgement.} I would like to thank Norbert Peyerimhoff for asking the question
on the volume growth of noncompact harmonic manifolds formulated in the introduction, which initiated this work. Furthermore,
I am grateful to him for some suggestions improving
the presentation of this article.



\begin{thebibliography}{ZZ99}
\bibitem[BBE]{BBE}
W.~Ballmann, M.~Brin, P.~Eberlein.
\textit{Structure of manifolds of nonpositive curvature I},
Ann. of Math. (2) \textbf{122} (1985), 171--203.

\bibitem[BCG]{BCG}
G.~Besson, G.~Courtois, S.~Gallot.
\textit{Entropies et regidit\'{e}s des espaces
localement sym\'{e}triques de courbure strictement n\'{e}gative.},
Geom. Funct. Anal. \textbf{5} (1995), no. 5, 731--799.

\bibitem[BFL]{BFL}
Y.~Benoist, P.~Foulon, F.~Labourie.
\textit{Flots d' Anosov \`{a} distributions stable et instable differ\'{e}ntiables},
J. Amer. Math. Soc.  \textbf{5} (1992), no. 1, 33--74.

\bibitem[Be]{Be}
A.L.~Besse.
\textit{Manifolds all of whose geodesics are closed},
Ergebnisse der Mathematik und ihrer Grenzgebiete [Results in Mathematics and Related Areas]
\textbf{93}, Springer Verlag, Berlin-New York, 1978.

\bibitem[BH]{BH}
M.R.~Bridson, A.~Haefliger.
\textit{Metric spaces of nonpositive curvature}.
Grundlehren der mathe\-matischen Wissenschaften, [Fundamental Principles of Mathematical Sciences], \textbf{319},
Springer-Verlag, Berlin, 1999. xxii + 643 pp.

\bibitem[Bo]{Bo}
J.~Bolton.
\textit{Conditions under which a geodesic flow is Anosov},
Math. Ann. \textbf{240} (1979), no. 2, 103--113.

\bibitem[DR]{DR}
E.~Damek, F.~Ricci.
\textit{A class of nonsymmetric harmonic Riemannian spaces},
Bull. Amer. Math. Soc. (N.S.) \textbf{27} (1992), no. 1, 139--142.

\bibitem[Eb]{Eb}
P.~Eberlein.
\textit{When ist the geodesic flow of Anosov type? I.},
J. Differential Geometry \textbf{8} (1973), 437--463.

\bibitem[EO]{EO}
P.~Eberlein, B.~O'Neill.
\textit{Visibility manifolds},
Pacific J. Math. \textbf{46} (1973), 45--109.

\bibitem[Es]{Es}
J.H.~Eschenburg.
\textit{Horospheres and the stable part of the geodesic flow},
Math. Zeitschrift \textbf{153} (1977), no. 3, 237--251.

\bibitem[FM]{FM}
A.~Freir\'{e}, R.~Ma\~{n}\'{e}.
\textit{On the entropy of the geodesic flow in manifolds without conjugate points},
Invent. Math. \textbf{69} (1982), no. 3, 375--392.

\bibitem[FL]{FL}
P.~Foulon, F.~Labourie.
\textit{Sur les vari\'{e}t\'{e}s compactes asymptotiquement harmoniques},
Invent. Math. \textbf{109} (1992), no. 1, 97--111.

\bibitem[Gr]{Gr}
M.~Gromov.
\textit{Hyperbolic groups},
In: Essays in group theory, Math. Sci. Res. Inst. Publ., \textbf{8}, Springer, New York (1987), 75--263.

\bibitem[He]{He}
J.~Heber.
\textit{On harmonic and asymptotically harmonic homogeneous spaces},
Geom. Funct. Anal. \textbf{16} (2006), no. 4, 869--890.

\bibitem[Kl]{Kl}
W.~Klingenberg.
\textit{Riemannian manifolds with geodesic flow of Anosov type},
Ann. of Math. (2) \textbf{99} (1974), 1--13.

\bibitem[Kn]{Kn}
G.~Knieper.
\textit{Hyperbolic Dynamics and Riemannian Geometry},
in Handbook of Dynamical Systems, Vol. 1A 2002, Elsevier Science B.,
eds. B.~Hasselblatt and A.~Katok, (2002), 453--545.

\bibitem[Kn1]{Kn1}
G.~Knieper.
\textit{On the asymptotic geometry of nonpositively curved manifolds},
Geom. Funct. Anal. \textbf{7} (1997), no. 4, 755--782.

\bibitem[Li]{Li}
A.~Lichnerowicz.
\textit{Sur les espaces riemanniens compl\`{e}ment harmoniques} (French),
Bull. Soc. Math. France \textbf{72} (1944), 146--168.

\bibitem[Ma]{Ma}
R.~Man\'{e}.
\textit{On a theorem of Klingenberg},
Dynamical Systems and Bifurcation Theory, (Rio de Janeiro, 1985), M.~Camacho, M.~Pacifico and
F.~Takens eds., Pitman Res. Notes Math. \textbf{160} Longman Sci. Tech., Harlow (1987), 319--345.

\bibitem[Ni]{Ni}
Y.~Nikolayevsky.
\textit{Two theorems on harmonic  manifolds},
Comment. Math. Helv. \textbf{80} (2005), no. 1, 29--50.

\bibitem[RSh1]{RSh1}
A.~Ranjan, H.~Shah.
\textit{Convexity of spheres in a manifold without conjugate points},
Proc. Indian Acad. Sci. Math. Sci. \textbf{112}  (2002), no. 4, 595--599.

\bibitem[RSh2]{RSh2}
A.~Ranjan, H.~Shah.
\textit{Harmonic manifolds with minimal horospheres},
J. Geom. Anal. \textbf{12} (2002), no. 4, 683--694.

\bibitem[RSh3]{RSh3}
A.~Ranjan, H.~Shah.
\textit{Busemann functions in a harmonic manifold},
Geom. Dedicata \textbf{101} (2003), 167--183.

\bibitem[SW]{SW}
N.E.~Steenrod,  J.H.C.~Whitehead.
\textit{Vector fields on the $n$-Sphere},
Proc. Nat. Acad. Sci. U. S. A. \textbf{37} (1951), 58--63.

\bibitem[Sz]{Sz}
Z.I.~Szab\'{o}.
\textit{The Lichnerowicz Conjecture on Harmonic manifolds},
J. Differential Geom. \textbf{31} (1990), no. 1, 1--28.
\end{thebibliography}
\end{document}